\newtheorem{theorem}{Theorem}
\newtheorem{lemma}[theorem]{Lemma}
\newtheorem{corollary}[theorem]{Corollary}
\theoremstyle{definition}
\newtheorem{remark}[theorem]{Remark}
\newtheorem{exm}[theorem]{Example}
\newtheorem{problem}[theorem]{Problem}
\newtheorem{definition}[theorem]{Definition}
\numberwithin{equation}{section}
\numberwithin{theorem}{section}
\author[P. Dragnev]{P. D. Dragnev $^{\dagger}$}
\address{Department of Mathematical Sciences
Indiana-Purdue University
Fort Wayne, IN 46805, USA }
\email{dragnevp@ipfw.edu}
\thanks{\noindent $^{\dagger}$ The research of this author was supported, in part, by a Simons Foundation grant no. 282207.}
\author[D.~Hardin]{D.~P.~Hardin$^*$}
\address{Center for Constructive Approximation, Department of Mathematics, 
Vanderbilt University,
Nashville, TN 37240, USA  }
\email{doug.hardin@vanderbilt.edu}
\author[E.~Saff]{E.~B.~Saff$^*$}
\email{edward.b.saff@vanderbilt.edu}
\thanks{\noindent $^*$ The research of these authors was supported, in part,
by the U.~S.~National Science Foundation under grants  DMS-1109266 and DMS-1412428.
}
\author[N.~Zorii]{N.~Zorii$^{**}$}
\address{Department of Mathematical Physics, Institute of Mathematics,
National Academy of Sciences of Ukraine, Tereshchenkivska 3, 01601,
Kyiv-4, Ukraine}
\email{natalia.zorii@gmail.com}
\thanks{
\noindent $^{**}$ The research of this author was supported, in part, by the Scholar-in-Residence program at IPFW
}
\thanks{The authors express their gratitude to Erwin Schr\"{o}dinger International Institute for providing  conducive research atmosphere during their stay when part of this manuscript was written.}
\begin{document}
\title{Minimum Riesz energy problems for a condenser with ``touching plates"}


\date{\today}							

\begin{abstract}

We consider minimum energy problems in the presence of an external field for
a condenser with ``touching plates" $A_1$ and $A_2$ in $\mathbb{R}^{n}$, $n\geqslant3$, relative to the $\alpha$-Riesz kernel $|x-y|^{\alpha-n}$, $0<\alpha\leqslant2$. An
intimate relationship between such problems and minimal $\alpha$-Green energy problems for positive measures on~$A_1$ is shown.
We obtain sufficient and/or necessary conditions for the solvability of these problems in both the unconstrained and
the constrained settings, investigate the properties of minimizers, and prove their uniqueness.
Furthermore, characterization theorems in terms of variational inequalities for the weighted potentials are established.
The approach applied is mainly based on the establishment of a perfectness-type property for the $\alpha$-Green kernel with $0<\alpha<2$
which enables us, in particular, to analyze the existence of the $\alpha$-Green equilibrium measure of a set. The results obtained are
illustrated by several examples.
\end{abstract}
\maketitle

\section{Introduction}

The purpose of the paper is to study minimum energy problems in the presence of an external field for
a condenser $\mathbf A$ with touching oppositely-charged plates $A_1$ and $A_2$ in $\mathbb{R}^{n}$, $n\geqslant3$, relative to the $\alpha$-Riesz kernel $|x-y|^{\alpha-n}$,
$0<\alpha\leqslant2$.
The difficulties appearing in the course of our investigation are caused by the fact that a short-circuit between $A_1$ and~$A_2$ might occur, for the Euclidean distance
between these conductors is zero.

Therefore, it is meaningful to ask what kind of conditions on the objects in question could prevent such a phenomenon so that
a minimizer for the corresponding $\alpha$-Riesz energy problem might
exist.
One of the ideas, to be discussed for this purpose, is to impose  upper constraints on the charges of the touching conductors.

We establish sufficient and/or necessary conditions for the existence of minimizing measures for both the unconstrained and
the constrained problems, and prove their uniqueness. The conditions obtained are expressed in geometric-potential terms for~$A_1$ and~$A_2$, or in measure theory terms for
the constraints under consideration, or in terms of variational inequalities for the weight\-ed potentials. We also provide a detailed analysis of the supports of the
minimizers.

The approach developed in the paper is based on a newly discovered important relationship between, on the one hand, minimum $\alpha$-Riesz energy problems over signed measures associated with a
condenser~$\mathbf A$ and, on the other hand,
minimum energy problems for nonnegative measures on~$A_1$ relative to the $\alpha$-Green function~$g^\alpha_{D}$ of the domain $D:=\mathbb R^n\setminus A_2$.

Regarding the latter problems, crucial to the arguments applied in their investigation is the pre-Hil\-bert structure on the linear space~$\mathcal E_{g^\alpha_{D}}(D)$ of all (signed)
Borel measures on~$D$ with finite $g^\alpha_{D}$-Green energy and, most importantly, a completeness theorem for certain metric subspaces of~$\mathcal
E_{g^\alpha_{D}}(D)$ with nonnegative elements. This completeness theorem enables us, in particular, to analyze the existence of the $\alpha$-Green equilibrium measure of a
set.

To formulate precisely the problems in question, we first need to introduce several notions, to discuss relations between them, and to recall some well-known results; this is the purpose of the next section. The scheme of the rest of the paper is described at the end of Section~\ref{sec-pr}, after the formulations of the problems  (see
Problems~\ref{pr} and~\ref{pr2}).

\section{Basic notions; relations between them. Known results}

\subsection{Measures, energies, potentials.} Let $\mathrm X$ be a locally compact Hausdorff space, to be
specified below, and $\mathfrak M=\mathfrak M(\mathrm X)$ the linear
space of all Radon measures~$\mu$ on~$\mathrm X$, equipped with the
{\it vague\/} ($=${\it weak\/}$^*$) topology, i.e.~the topology of
pointwise convergence on the class of all
continuous functions on~$\mathrm X$ with compact
support. We denote by~$\mu^+$ and~$\mu^-$ the positive and the negative parts in the Hahn--Jordan decomposition of a measure $\mu\in\mathfrak M(\mathrm X)$, respectively, and by
$S^\mu_{\mathrm X}$ its support. Given~$\mu$ and a $\mu$-measurable function~$\psi$, for the sake of brevity we shall write
$\langle\psi,\mu\rangle:=\int\psi\,d\mu$.\footnote{When introducing notation, we assume
the corresponding object on the right to be well-defined.}

A {\it
kernel\/} $\kappa(x,y)$ on $\mathrm X$ is a symmetric, lower
semicontinuous function $\kappa:\mathrm X\times\mathrm
X\to[0,\infty]$. Given $\mu,\mu_1\in\mathfrak M$, let
$E_\kappa(\mu,\mu_1)$ and $U_\kappa^\mu(\cdot)$ denote the {\it mutual
energy\/} and the {\it potential\/} relative to the kernel~$\kappa$,
respectively, i.e.
\begin{eqnarray*}
E_\kappa(\mu,\mu_1)\!\!\!\!&:=&\!\!\!\!\int\kappa(x,y)\,d(\mu\otimes\mu_1)(x,y),\\
U_\kappa^\mu(x)\!\!\!\!&:=&\!\!\!\!\int\kappa(x,y)\,d\mu(y).
\end{eqnarray*}
For $\mu=\mu_1$, the mutual energy $E_\kappa(\mu,\mu_1)$ defines the
{\it energy\/} $E_\kappa(\mu,\mu)=:E_\kappa(\mu)$.

\subsection{Strictly positive definite kernels. Capacities.}
Throughout this section, a kernel~$\kappa$ is assumed to be {\it strictly positive definite\/}, which means that $E_\kappa(\mu)$, $\mu\in\mathfrak M(\mathrm X)$, is
nonnegative whenever defined and $E_\kappa(\mu)=0$ implies $\mu=0$.
Then the collection $\mathcal E_\kappa=\mathcal E_\kappa(\mathrm X)$
of all $\mu\in\mathfrak M$ with
$E_\kappa(\mu)<\infty$ forms a pre-Hil\-bert space with the
scalar product $E_\kappa(\mu,\mu_1)$ and the norm
$\|\mu\|_\kappa:=\sqrt{E_\kappa(\mu)}$ (see~\cite{F1}). The topology
on~$\mathcal E_\kappa$ defined by~$\|\cdot\|_\kappa$ is called {\it
strong\/}. The following lemma from the geometry of the
pre-Hilbert space~$\mathcal E_\kappa$ is often useful (see~\cite[Lemma~4.1.1]{F1}).

\begin{lemma}\label{convex}Let\/
$\Gamma$ be a convex subset of\/~$\mathcal E_\kappa$. If there
exists\/ $\mu_0\in\Gamma$ with minimal norm\/
\[\|\mu_0\|_\kappa=\inf_{\mu\in\Gamma}\,\|\mu\|_\kappa,\] then such a minimal
element is unique. Moreover,
\[\|\mu-\mu_0\|_\kappa^2\leqslant\|\mu\|_\kappa^2-\|\mu_0\|_\kappa^2\quad\text{for all \ }\mu\in\Gamma.\]
\end{lemma}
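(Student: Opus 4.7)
The plan is to exploit the Hilbert-space geometry of $\mathcal E_\kappa$ together with the convexity of $\Gamma$, in the form of a standard first-order variational argument combined with the fact that $\|\cdot\|_\kappa$ is a genuine norm (which rests on the strict positive definiteness of $\kappa$).

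First I would fix an arbitrary $\mu\in\Gamma$ and form the convex combination $\mu_t:=(1-t)\mu_0+t\mu=\mu_0+t(\mu-\mu_0)$, which lies in $\Gamma$ for every $t\in[0,1]$ by convexity. Since $\mu_0$ achieves the infimum of $\|\cdot\|_\kappa$ on $\Gamma$, the bilinear expansion
\[
\|\mu_t\|_\kappa^2=\|\mu_0\|_\kappa^2+2tE_\kappa(\mu_0,\mu-\mu_0)+t^2\|\mu-\mu_0\|_\kappa^2\geqslant\|\mu_0\|_\kappa^2
\]
holds for all such $t$. Dividing by $t>0$ and letting $t\to 0^+$ yields the first-order variational inequality $E_\kappa(\mu_0,\mu-\mu_0)\geqslant 0$, i.e.\ $E_\kappa(\mu_0,\mu)\geqslant\|\mu_0\|_\kappa^2$.

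Next I would expand the squared distance directly:
\[
\|\mu-\mu_0\|_\kappa^2=\|\mu\|_\kappa^2-2E_\kappa(\mu,\mu_0)+\|\mu_0\|_\kappa^2\leqslant\|\mu\|_\kappa^2-2\|\mu_0\|_\kappa^2+\|\mu_0\|_\kappa^2=\|\mu\|_\kappa^2-\|\mu_0\|_\kappa^2,
\]
which is exactly the asserted inequality. For uniqueness, I would then apply this inequality with $\mu$ replaced by a second minimizer $\mu_0'\in\Gamma$, noting that $\|\mu_0'\|_\kappa=\|\mu_0\|_\kappa$; this forces $\|\mu_0'-\mu_0\|_\kappa^2\leqslant 0$, and strict positive definiteness of $\kappa$ on the space of signed measures of finite energy (which is what makes $\|\cdot\|_\kappa$ a norm on $\mathcal E_\kappa$) gives $\mu_0'=\mu_0$.

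There is no real obstacle here: the entire argument is a textbook use of the pre-Hilbert structure on $\mathcal E_\kappa$. The only mild subtlety worth flagging is that $\mu-\mu_0$ is a priori a signed measure, so we must know that the scalar product $E_\kappa(\cdot,\cdot)$ extends bilinearly to $\mathcal E_\kappa$ and that $\|\cdot\|_\kappa$ separates points there; both facts are supplied by the strict positive definiteness hypothesis on $\kappa$ recalled just before the lemma.
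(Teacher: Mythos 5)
Your proof is correct. The paper itself does not supply a proof of this lemma but simply cites Fuglede \cite[Lemma~4.1.1]{F1}; your first-order variational argument (differentiating $\|\mu_0+t(\mu-\mu_0)\|_\kappa^2$ at $t=0^+$ to get $E_\kappa(\mu_0,\mu)\geqslant\|\mu_0\|_\kappa^2$, then expanding $\|\mu-\mu_0\|_\kappa^2$) is exactly the standard route to this inequality with the sharp constant, and the uniqueness follows as you say from strict positive definiteness of~$\kappa$.
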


Given a set $B\subset\mathrm X$, let $\mathfrak M^+(B)$ be the
convex cone of all nonnegative measures concentrated in~$B$, and let
$\mathfrak M^+(B,b)$, $b>0$, consist of all $\mu\in\mathfrak M^+(B)$
with $\mu(B)=b$. We also write $\mathfrak M^+:=\mathfrak M^+(\mathrm
X)$, $\mathcal E^+_\kappa(B):=\mathcal E_\kappa\cap\mathfrak
M^+(B)$, $\mathcal E^+_\kappa:=\mathcal E^+_\kappa(\mathrm X)$, and
$\mathcal E^+_\kappa(B,b):=\mathcal E_\kappa\cap\mathfrak M^+(B,b)$.

Let~$C_\kappa(B)$ denote the {\it interior capacity\/} of~$B$
relative to a kernel~$\kappa$, defined by\footnote{As usual, the
infimum over the empty set is taken to be~$+\infty$. We put
$1\bigl/(+\infty)=0$ and $1\bigl/0=+\infty$.}
\begin{equation}\label{cap}1\bigl/C_\kappa(B):=w_\kappa(B):=\inf_{\mu\in\mathcal
E_\kappa^+(B,1)}\,E_\kappa(\mu).\end{equation}  Note that, in
consequence of Lemma~\ref{convex}, a measure $\lambda_B=\lambda^\kappa_B\in\mathcal
E_\kappa^+(B,1)$ with minimal energy $\|\lambda_B\|^2_\kappa=w_\kappa(B)$ is unique (provided it exists).

Following Fuglede~\cite{F1}, we call a
kernel~$\kappa$ {\it perfect\/} if any strong Cauchy sequence in~$\mathcal E_\kappa^+$ converges strongly and, in addition, the strong topology on~$\mathcal E_\kappa^+$ is finer than the induced vague topology on~$\mathcal E_\kappa^+$.
Note that then the metric space~$\mathcal E_\kappa^+$ is complete in the induced strong topology. What is also important is that the solution~$\lambda^\kappa_B$ to the minimum
energy problem appeared in~(\ref{cap}) exists, provided that $\kappa$ is perfect, $B$ is closed, and $0<C_\kappa(B)<\infty$ (see~\cite[Theorem~4.1]{F1}).

 If $f:\mathrm X\to[-\infty,\infty]$ is an {\it external field\/}, then the $f$-{\it weighted potential\/} $W^\mu_{\kappa,f}$ and the $f$-{\it weight\-ed energy\/} $G_{\kappa,f}(\mu)$ of
 $\mu\in\mathcal E_\kappa(\mathrm X)$ are respectively given by
\begin{align*}W^\mu_{\kappa,f}(x)&:=U_\kappa^\mu(x)+f(x),\\
G_{\kappa,f}(\mu)&:=E_\kappa(\mu)+2\langle f,\mu\rangle=\langle W^\mu_{\kappa,f}+f,\mu\rangle.\end{align*}
We also define \[\mathcal E_{\kappa,f}(\mathrm X):=\bigl\{\mu\in\mathcal E_\kappa(\mathrm X): \ G_{\kappa,f}(\mu)<\infty\bigr\}.\]

\subsection{$\alpha$-Riesz and $\alpha$-Green kernels. Balayage.}\label{sec:bala}
Fix $n\geqslant3$, a domain~$D\subset\mathbb R^n$, and $\alpha\in(0,2]$. In the rest of the paper, unless stated otherwise, one of the following two cases is assumed to hold:
either $\mathrm X=\mathbb R^n$ and $\kappa$ is the $\alpha$-Riesz
kernel $\kappa_\alpha(x,y):=|x-y|^{\alpha-n}$ (where $|x-y|$ is the Euclidean distance in~$\mathbb R^n$ between $x$ and~$y$), or $\mathrm X=D$ and
$\kappa$ is the generalized $\alpha$-Green function~$g^\alpha_D$ of~$D$, defined by\footnote{The function $g^\alpha_D(x,y)$, $x,y\in D$, is nonnegative and
symmetric (see~\cite{Fr} and~\cite[Chapter~IV, Section~5]{L}). It is also lower semicontinuous, which follows from the lower
semicontinuity of the $\alpha$-Riesz kernel and the continuity
of~$U_{\kappa_\alpha}^{\beta^\alpha_{D^c}\varepsilon_y}(x)$, $x,y\in
D$, the latter in turn being a consequence of
$U_{\kappa_\alpha}^{\beta^\alpha_{D^c}\varepsilon_y}(x)=
U_{\kappa_\alpha}^{\beta^\alpha_{D^c}\varepsilon_x}(y)$, $x,y\in
D$. Hence, $g^\alpha_D$ can be treated as a kernel on the locally compact space~$D$.}
\begin{equation}\label{G}g^\alpha_D(x,y)=U_{\kappa_\alpha}^{\varepsilon_y}(x)-
U_{\kappa_\alpha}^{\beta^\alpha_{D^c}\varepsilon_y}(x)\quad
\text{for all \ }x,y\in D,\end{equation} where $\varepsilon_y$ denotes the unit Dirac measure at a
point~$y$ and $\beta^\alpha_{D^c}$ the $\alpha$-Riesz
balayage onto $D^c:=\mathbb
R^n\setminus D$ (cf.~\cite[Chapter~IV, Section~5]{L}, or see just below).

      To avoid triviality, assume each component of~$D^c$ to have nonzero $\alpha$-Riesz capacity. Note that, if $\alpha=2$ and $D$ is regular in the sense of the solvability of
      the (classical) Dirichlet problem, then $g^2_D$ is, in fact, the classical Green function of~$D$.

Let $Q$ be a given closed subset of~$\mathbb R^n$. By definition, the $\alpha$-Riesz {\it balayage\/} measure
$\beta^\alpha_Q\mu$ of $\mu\in\mathfrak M(\mathbb R^n)$ onto~$Q$ is
supported by~$Q$ and satisfies the relation
\begin{equation*}\label{def}U_{\kappa_\alpha}^{\beta^\alpha_Q\mu}(x)=U_{\kappa_\alpha}^{\mu}(x)\quad\text{n.e.~in
\ }Q,\end{equation*} where ``{\it n.e.}" ({\it nearly everywhere\/}) means that the equality holds everywhere in~$Q$ except for a subset with $\alpha$-Riesz capacity zero. Such
a~$\beta^\alpha_Q\mu$ exists and it
is unique among the $C$-{\it ab\-sol\-utely continuous\/} measures $\nu\in\mathfrak M(\mathbb R^n)$, namely those that $\nu(K)=0$ for every compact $K\subset\mathbb R^n$ with
$C_{\kappa_\alpha}(K)=0$. Throughout the paper, when speaking of the
$\alpha$-Riesz balayage measure, we always mean exactly this one.
Then, by~\cite[Chapter~IV, Section~5]{L},
\begin{equation}\label{repr}\beta^\alpha_Q\mu=\int\beta^\alpha_Q\varepsilon_y\,d\mu(y).\end{equation}
If $\mu\geqslant0$, then also
$U_{\kappa_\alpha}^{\beta^\alpha_Q\mu}(x)\leqslant
U_{\kappa_\alpha}^{\mu}(x)$ for all $x\in\mathbb R^n$. If, moreover,
$\mu\in\mathcal E^+_{\kappa_\alpha}(\mathbb R^n)$, then \[E_{\kappa_\alpha}(\beta^\alpha_Q\mu)\leqslant E_{\kappa_\alpha}(\mu)\] and also
\begin{equation}\label{bala}\bigl\|\mu-\beta^\alpha_Q\mu\bigr\|_{\kappa_\alpha}<\bigl\|\mu-\nu\bigr\|_{\kappa_\alpha}\quad\text{for all \ }
\nu\in\mathcal E^+_{\kappa_\alpha}(Q), \
\nu\not=\beta^\alpha_Q\mu,\end{equation}
so that $\beta^\alpha_Q\mu$ is, in fact, the orthogonal projection of $\mu$ in the pre-Hilbert space~$\mathcal E_{\kappa_\alpha}(\mathbb R^n)$ onto the convex cone $\mathcal
E^+_{\kappa_\alpha}(Q)$.

It is well known (see \cite{L}) that, if $\mu\in\mathfrak M^+(\mathbb R^n)$ is {\it bounded\/} (i.e., $\mu(\mathbb R^n)<\infty$), then
\[\beta^\alpha_Q\mu(\mathbb R^n)\leqslant\mu(\mathbb R^n).\]
This general fact is specified by the following assertion (see \cite[Theorem~4]{Z1}; for $\alpha=2$, see also \cite[Theorem~B]{Z0}).

\begin{theorem}\label{th-bala} $Q$ is not\/ $\alpha$-thin at the Alexandroff point\/~$\omega$ of\/~$\mathbb R^n$ if and only if, for every bounded measure\/ $\mu\in\mathfrak
M^+(\mathbb R^n)$,
\begin{equation}\label{leq-bala}\beta^\alpha_Q\mu(\mathbb R^n)=\mu(\mathbb R^n).\end{equation}
\end{theorem}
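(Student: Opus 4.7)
The plan is to reduce, via the representation~(\ref{repr}), the theorem to a pointwise assertion at a single Dirac measure. Fubini's theorem, applied to the $\mu$-measurable function $y\mapsto\beta^\alpha_Q\varepsilon_y(\mathbb R^n)$, which lies in $[0,1]$ by the general bound $\beta^\alpha_Q\varepsilon_y(\mathbb R^n)\leqslant\varepsilon_y(\mathbb R^n)=1$, yields
\[
\beta^\alpha_Q\mu(\mathbb R^n)=\int_{\mathbb R^n}\beta^\alpha_Q\varepsilon_y(\mathbb R^n)\,d\mu(y)
\]
for every bounded $\mu\in\mathfrak M^+(\mathbb R^n)$. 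Hence (\ref{leq-bala}) holds for every such $\mu$ if and only if $\beta^\alpha_Q\varepsilon_y(\mathbb R^n)=1$ for all $y\in\mathbb R^n$. The theorem is thereby reduced to showing that this pointwise condition is equivalent to $Q$ not being $\alpha$-thin at $\omega$.

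To establish this equivalence, I would transfer the problem to the finite point $y$ via Kelvin inversion $K_y\colon x\mapsto y+(x-y)/|x-y|^2$, which exchanges $\omega$ and $y$ and sends $Q\setminus\{y\}$ to a set $Q^*$. Two classical facts from the Kelvin theory of $\alpha$-Riesz potentials (Landkof, Chapter~IV, \S5) are invoked: first, $Q$ is $\alpha$-thin at $\omega$ iff $Q^*$ is $\alpha$-thin at $y$; second, by the conformal change-of-variables formula for Riesz potentials, balayage is conjugated in a mass-preserving way, giving
\[
\beta^\alpha_Q\varepsilon_y(\mathbb R^n)=\beta^\alpha_{Q^*}\varepsilon_y(\mathbb R^n).
\]
Matters then reduce to proving the local criterion: for a closed $F\subset\mathbb R^n$ and $p\in\mathbb R^n$, $\beta^\alpha_{F}\varepsilon_p(\mathbb R^n)=1$ if and only if $F$ is not $\alpha$-thin at $p$.

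This local criterion is the $\alpha$-Riesz analogue of the Kellogg--Evans theorem. If $F$ is not $\alpha$-thin at $p$, one approximates $\varepsilon_p$ vaguely by measures of finite Riesz energy (e.g.\ normalized Lebesgue measure on shrinking balls around $p$), then invokes the orthogonal-projection characterization~(\ref{bala}), the perfectness of the $\alpha$-Riesz kernel, and the continuity of the balayage potential at the $\alpha$-regular point $p$ to conclude that the vague limit of the corresponding balayage measures preserves total mass. Conversely, if $F$ is $\alpha$-thin at $p$, a superharmonic barrier at $p$ supplied by the Wiener-type criterion forces the balayage potential strictly below $U_{\kappa_\alpha}^{\varepsilon_p}$ in a fine neighbourhood of $p$, and a comparison of the asymptotic behaviour of the Riesz potentials at infinity yields $\beta^\alpha_F\varepsilon_p(\mathbb R^n)<1$.

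The principal obstacle is executing the Kelvin-inversion step together with the Kellogg--Evans analogue rigorously for general $\alpha\in(0,2]$: for $\alpha=2$ both components are classical (harmonic-measure language), but for $\alpha<2$ they depend on the conformal change-of-variables formula for Riesz potentials and on the fine-topological/perfectness machinery developed by Landkof and Fuglede, both of which must be extracted with care from the $\alpha$-Riesz potential-theoretic framework.
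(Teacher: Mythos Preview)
The paper does not supply a proof of this theorem: it is quoted as a known result from \cite[Theorem~4]{Z1} (with \cite[Theorem~B]{Z0} for $\alpha=2$), so there is no in-paper argument to compare against. Your reduction to unit Dirac masses via the integral representation~(\ref{repr}) is correct and is the natural first step in any proof.

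The Kelvin-inversion step, however, is not set up correctly, and the ``local criterion'' you arrive at is false. Inverting at the point~$y$ itself sends $y$ to~$\omega$, so $\varepsilon_y$ has no Kelvin transform in the usual sense; the change-of-variables formula multiplies masses by the weight $|x-y|^{\alpha-n}$ rather than preserving them, so the displayed identity $\beta^\alpha_Q\varepsilon_y(\mathbb R^n)=\beta^\alpha_{Q^*}\varepsilon_y(\mathbb R^n)$ does not follow. More decisively, the assertion ``$\beta^\alpha_F\varepsilon_p(\mathbb R^n)=1$ iff $F$ is not $\alpha$-thin at~$p$'' fails already for $\alpha=2$: take $F=\{x:|x|\geqslant1\}$ and $p=0$. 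Then $p\notin\overline F$, so $F$ is trivially $\alpha$-thin at~$p$, yet $\beta^2_F\varepsilon_0(\mathbb R^n)=1$ (Brownian motion started at the origin exits the unit ball almost surely). The total mass of $\beta^\alpha_F\varepsilon_p$ is governed by the behaviour of~$F$ at~$\omega$, not at~$p$; your inversion was meant to swap these two roles, but the mass identity you need does not survive the transform. A workable route (cf.~\cite{Z1}) is to invert at a fixed finite point, say the origin, and use that for any bounded $\nu\geqslant0$ the Kelvin transform satisfies $\nu^*(\mathbb R^n)=U^{\nu}_\alpha(0)$; the equality $\beta^\alpha_Q\mu(\mathbb R^n)=\mu(\mathbb R^n)$ then translates, on the inverted side, into equality of $\alpha$-Riesz potentials at the image of~$\omega$, i.e.\ into $\alpha$-regularity of~$0$ for~$Q^*$, which is exactly the definition of $Q$ not being $\alpha$-thin at~$\omega$.
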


By definition, $Q$ is {\it not $\alpha$-thin
at\/}~$\omega$ if $Q^*$, the inverse of~$Q$ relative to the unit sphere $S(0,1):=\{x\in\mathbb R^n: |x|=1\}$, is not $\alpha$-thin at $x=0$, or equivalently
(see \cite[Theorem~5.10]{L}), if $x=0$ is an
$\alpha$-regular point for~$Q^*$.\footnote{For $\alpha=2$, this definition is due to Brelot
(see~\cite{Brelo2}; cf.~also~\cite{F3,Hayman2}). For $\alpha\in(0,2)$, such a notion has been introduced in~\cite{Z1}.}

If $\mu\in\mathfrak M(D)$ and $U_{g^\alpha_D}^\mu(x)$ is well
defined at $x\in D$, then, due to (\ref{G}) and~(\ref{repr}),
\begin{equation}\label{UG}U_{g^\alpha_D}^\mu(x)=U_{\kappa_\alpha}^{\mu}(x)-
U_{\kappa_\alpha}^{\beta^\alpha_{D^c}\mu}(x).\end{equation} Likewise,
\begin{equation}
\label{EG}E_{g^\alpha_D}(\mu)=E_{\kappa_\alpha}\bigl(\mu-\beta^\alpha_{D^c}\mu\bigr)=
E_{\kappa_\alpha}(\mu)-E_{\kappa_\alpha}\bigl(\beta^\alpha_{D^c}\mu\bigr),\end{equation}
whenever $E_{g^\alpha_D}(\mu)$ is well defined.
See~\cite[Chapter~XI, Section~10]{Br} and~\cite[Chapter~IV, Section~1, n$^\circ$\,2]{L}, where (\ref{EG}) has been shown for
$\alpha=2$. For $\alpha<2$, the proof is based on~(\ref{G})
and~(\ref{repr}) and runs in a way similar to that in~\cite{L}.

It is well known that the $\alpha$-Riesz kernel is strictly positive
definite and, moreover, perfect (see, e.g., \cite{Car,D1,D2,F1,L});
hence, the metric space $\mathcal E^+_{\kappa_\alpha}(\mathbb R^n)$
is complete in the induced strong topology. However, by
Cartan~\cite{Car}, the whole pre-Hilbert space~$\mathcal
E_{\kappa_\alpha}(\mathbb R^n)$ is, in general, strongly incomplete, and this is the case even for the Coulomb kernel
$\kappa_2(x,y)=|x-y|^{-1}$ on~$\mathbb R^3$; compare with
Theorem~\ref{th-complete} below.

Thus, in consequence of~(\ref{EG}), the $\alpha$-Green kernel is strictly positive
definite as well. In the case $\alpha=2$, the $2$-Green kernel~$g^2_D$ is
actually known to be perfect (see~\cite{E1,F1,L}), so that, by~\cite[Theorem~4.1]{F1}, the measure $\lambda_B\in\mathcal E^+_{g^2_D}(B,1)$ with minimal $g^2_D$-Green energy
\[\|\lambda_B\|^2_{g^2_D}=w_{g^2_D}(B)=\bigl[C_{g^2_D}(B)\bigr]^{-1}\] exists provided that $B$ is closed and $0<C_{g^2_D}(B)<\infty$. For similar
results related to the $\alpha$-Green kernel with $\alpha<2$, see
Theorems~\ref{alphaeq1} and~\ref{alphaeq} below.

From now on we shall often write simply $\alpha$ instead
of~$\kappa_\alpha$ if it serves as an index. E.g.,
$C_\alpha(\cdot)=C_{\kappa_\alpha}(\cdot)$ and~$C_{g^\alpha_D}(\cdot)$ denote the
$\alpha$-Riesz and the $\alpha$-Green interior capacities of a set,
respectively.

\begin{lemma}\label{lemma:equiv}For any\/ $B\subset D$, $C_\alpha(B)=0$ if
and only if\/ $C_{g_D^\alpha}(B)=0$.\end{lemma}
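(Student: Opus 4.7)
My plan is to handle the two directions separately. The forward implication is an immediate kernel comparison, while the reverse reduces, via inner regularity, to compact subsets $K\subset D$, on which $g_D^\alpha$ and $\kappa_\alpha$ differ only by a uniformly bounded function.

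The direction $C_{g_D^\alpha}(B)=0\Rightarrow C_\alpha(B)=0$ follows from the pointwise comparison $g_D^\alpha(x,y)\leq \kappa_\alpha(x,y)$ on $D\times D$, which is a consequence of~(\ref{G}) together with the nonnegativity of the Riesz potential of the positive measure $\beta^\alpha_{D^c}\varepsilon_y$. Hence every $\mu\in\mathcal E_\alpha^+(B,1)$ belongs to $\mathcal E_{g_D^\alpha}^+(B,1)$ and satisfies $E_{g_D^\alpha}(\mu)\leq E_{\kappa_\alpha}(\mu)$, so $w_{g_D^\alpha}(B)\leq w_\alpha(B)$ and $C_{g_D^\alpha}(B)\geq C_\alpha(B)$.

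For the reverse direction, observe that by the standard description of interior capacity, $C_\alpha(B)$ (respectively $C_{g_D^\alpha}(B)$) vanishes if and only if the corresponding capacity vanishes on every compact $K\subset B$, so it suffices to fix a compact $K\subset D$ and prove $C_\alpha(K)=0\iff C_{g_D^\alpha}(K)=0$. Set $\delta:=\mathrm{dist}(K,D^c)>0$. Since $|x-z|\geq\delta$ for all $x\in K$ and $z\in D^c$, and since $\beta^\alpha_{D^c}\varepsilon_y$ is supported in $D^c$ and carries total mass at most $1$, we obtain the uniform bound
\begin{equation*}
U_{\kappa_\alpha}^{\beta^\alpha_{D^c}\varepsilon_y}(x)=\int|x-z|^{\alpha-n}\,d\beta^\alpha_{D^c}\varepsilon_y(z)\leq \delta^{\alpha-n}\quad\text{for all } x,y\in K.
\end{equation*}
Substituting into~(\ref{G}) yields $\kappa_\alpha(x,y)-\delta^{\alpha-n}\leq g_D^\alpha(x,y)\leq \kappa_\alpha(x,y)$ on $K\times K$, and integrating against $\mu\otimes\mu$ for an arbitrary $\mu\in\mathfrak M^+(K)$ gives
\begin{equation*}
E_{\kappa_\alpha}(\mu)-\delta^{\alpha-n}\mu(K)^2\leq E_{g_D^\alpha}(\mu)\leq E_{\kappa_\alpha}(\mu).
\end{equation*}
Hence the two energies are simultaneously finite, $\mathcal E_{\kappa_\alpha}^+(K)=\mathcal E_{g_D^\alpha}^+(K)$ as sets of measures, and in particular $C_\alpha(K)$ and $C_{g_D^\alpha}(K)$ vanish together.

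The only genuinely nontrivial step is the uniform bound on $U_{\kappa_\alpha}^{\beta^\alpha_{D^c}\varepsilon_y}$ over~$K$, which rests on $K$ being at positive distance from~$D^c$; the remainder is bookkeeping from the definitions in~(\ref{cap}),~(\ref{G}), and the basic properties of balayage recalled in Section~\ref{sec:bala}.
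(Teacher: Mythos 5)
Your proof is correct, and while it reaches the same result it does so by a genuinely different route. The paper's argument reduces to compact $K\subset D$ via inner regularity (\ref{ex}), just as you do, but it then invokes Fuglede's characterization (\ref{w}) of compact-set capacity as $\sup\mu(K)$ over $\mu\in\mathfrak M^+(K)$ with $U_\kappa^\mu\leqslant 1$ on the support, and concludes by observing that $U_{\kappa_\alpha}^\mu-U_{g_D^\alpha}^\mu=U_{\kappa_\alpha}^{\beta^\alpha_{D^c}\mu}$ is bounded on~$K$ (so admissible measures for the two kernels differ only by a rescaling). You instead work directly with the energy-infimum definition~(\ref{cap}): the same positive-distance fact $\mathrm{dist}(K,D^c)=\delta>0$ gives the uniform two-sided kernel comparison $\kappa_\alpha-\delta^{\alpha-n}\leqslant g_D^\alpha\leqslant\kappa_\alpha$ on $K\times K$ (using $\beta^\alpha_{D^c}\varepsilon_y(\mathbb R^n)\leqslant 1$), hence $\mathcal E_{\kappa_\alpha}^+(K)=\mathcal E_{g_D^\alpha}^+(K)$ and the classes $\mathcal E_{\kappa_\alpha}^+(K,1)$, $\mathcal E_{g_D^\alpha}^+(K,1)$ are simultaneously empty, so the two capacities vanish together. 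Your version is somewhat more self-contained, as it avoids appealing to \cite[Theorem~2.5]{F1}; the paper's version is shorter once that theorem is taken as known. Both hinge on the same geometric input, namely that a compact subset of $D$ is at positive distance from $D^c$, and both use the easy one-sided bound $g_D^\alpha\leqslant\kappa_\alpha$ for the forward implication (the paper implicitly, through the same two-sided comparison).
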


\begin{proof} We need the following general facts related to an arbitrary strictly positive definite kernel~$\kappa$ on a locally compact space~$\mathrm X$. First of all, for
any $B\subset\mathrm X$,
\begin{equation}\label{ex}C_\kappa(B)=\sup_{K\in\{K\}_B}\,C_\kappa(K),\end{equation} where $\{K\}_B$ consists of all compact subsets of~$B$ (see~\cite{F1}). Further, for a
compact set
$K\subset\mathrm X$ one has $C_\kappa(K)<\infty$ and hence,
by~\cite[Theorem~2.5]{F1},
\begin{equation}\label{w}C_\kappa(K)=\sup\,\mu(K),\end{equation}
where $\mu$ ranges over all nonnegative measures
supported by~$K$ with the additional property
\[U_\kappa^\mu(x)\leqslant1\quad\text{for all \ }x\in S^\mu_{\mathrm X}.\]

Now we apply representation
(\ref{w}) to a compact set $K\subset D$ and each of the $\alpha$-Riesz and the $\alpha$-Green kernels, which is possible in view of their strict positive definiteness.
Since for every $\mu\in\mathfrak M^+(K)$,
$U_{\kappa_\alpha}^\mu(x)-U_{g_D^\alpha}^\mu(x)$ is bounded on~$K$
in consequence of~(\ref{UG}), the lemma for $B=K$ follows. To prove the lemma for any $B\subset D$, it is thus left to apply~(\ref{ex}).\end{proof}

Let $B\subset D$. By Lemma~\ref{lemma:equiv}, if some expression $E(x)$ is valid n.e.~in~$B$, then $C_{g^\alpha_D}(N)=0$, $N$~being the set of all $x\in B$ with $E(x)$ not to
hold; and also the other way around.

\subsection{Condensers. Existence of minimizers.} By a {\it condenser\/} in $\mathbb R^n$ we mean an ordered pair
$\mathbf B=(B_1,B_2)$ of nonintersecting sets $B_1,B_2\subset\mathbb
R^n$ (so far of arbitrary topological structure), treated as the
{\it positive\/} and the {\it negative\/} plates of~$\mathbf B$, respectively. Define
\[\mathfrak M(\mathbf B):=\bigl\{\nu\in\mathfrak M(\mathbb R^n): \ \nu^+\in\mathfrak M^+(B_1), \
\nu^-\in\mathfrak M^+(B_2)\bigr\},\]
\[\mathcal E_\alpha(\mathbf B):=\mathfrak M(\mathbf B)\cap\mathcal E_\alpha(\mathbb R^n).\]
Then the following theorem on
the strong completeness is true (see~\cite[Theorem~1]{ZUmzh}; compare with~\cite{Car} or \cite[Theorem~1.19]{L}).\footnote{In fact, Theorem~\ref{th-complete}
 holds true also for $\alpha\in(2,n)$; see~\cite[Theorem~1]{ZUmzh}. Its proof is based on Deny's
theorem~\cite{D1} stating that, for the Riesz kernels, $\mathcal
E_\alpha$~can be completed by making use of distributions with finite
energy. Regarding the history of the question, see \cite[Theorem~A]{Z0} and \cite[Theorem~1]{Z1}.}

\begin{theorem}\label{th-complete}
Assume\/ $B_1$ and\/ $B_2$ to be closed in\/ $\mathbb R^n$. Then the metric space\/~$\mathcal
E_\alpha(\mathbf B)$ is complete in the induced strong topology, and
the strong convergence in this space implies the vague convergence
to the same limit.
\end{theorem}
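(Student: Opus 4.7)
The plan is to use Deny's completeness theorem \cite{D1} for distributions of finite $\alpha$-Riesz energy as the foundational tool, and then to exploit the disjointness and closedness of $B_1$ and $B_2$ to promote the distributional limit to a signed Radon measure in $\mathfrak M(\mathbf B)$ of finite energy. Let $\{\nu_k\}$ be a strong Cauchy sequence in $\mathcal E_\alpha(\mathbf B)\subset\mathcal E_\alpha(\mathbb R^n)$. By Deny's theorem it admits a strong limit $T$ in the distributional completion of $\mathcal E_\alpha(\mathbb R^n)$; in particular, $\nu_k\to T$ in $\mathcal D'(\mathbb R^n)$. Once $T$ is shown to lie in $\mathcal E_\alpha(\mathbf B)$, the desired strong convergence $\nu_k\to T$ in $\mathcal E_\alpha(\mathbf B)$ follows from uniqueness of limits in the completion.

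Next I would read off the Hahn decomposition of the limit from the sign structure of the condenser. Given $\phi\in C_c^\infty(\mathbb R^n)$ with $\phi\geqslant0$ and $\mathrm{supp}\,\phi\cap B_2=\emptyset$, one has $\langle\phi,\nu_k\rangle=\langle\phi,\nu_k^+\rangle\geqslant0$, so passage to the distributional limit yields $\langle\phi,T\rangle\geqslant0$. Thus $T$ restricted to the open set $\mathbb R^n\setminus B_2$ is a nonnegative distribution, hence a nonnegative Radon measure $\nu^+$, necessarily concentrated in the closed set $B_1$. Symmetrically, $-T$ restricted to $\mathbb R^n\setminus B_1$ is a nonnegative Radon measure $\nu^-$ concentrated in $B_2$. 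On the overlap $(B_1\cup B_2)^c$ both $\nu^+$ and $-\nu^-$ represent $T$, forcing both to vanish there; therefore $T=\nu^+-\nu^-$ with $\nu^\pm\in\mathfrak M^+(B_j)$.

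It remains to verify $\nu^\pm\in\mathcal E^+_\alpha$ and to promote subsequential vague convergence to convergence of the whole sequence. The decisive geometric input is that, since $B_1\cap B_2=\emptyset$ and both are closed, for every compact $K\subset\mathbb R^n$ the compact sets $K\cap B_1$ and $K\cap B_2$ sit at a positive Euclidean distance $\delta_K>0$, so $\kappa_\alpha$ is bounded by $\delta_K^{\alpha-n}$ across the two plates inside $K$. Combining this with the standard estimate $\mu(K\cap B_j)\leqslant\sqrt{C_\alpha(K\cap B_j)}\,\|\mu\|_{\kappa_\alpha}$ for $\mu\in\mathcal E^+_\alpha(K\cap B_j)$, and with $\sup_k\|\nu_k\|_{\kappa_\alpha}<\infty$, one derives a uniform local bound $\sup_k\bigl[\nu_k^+(K)+\nu_k^-(K)\bigr]<\infty$. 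Local vague compactness, diagonal extraction along an exhaustion of $\mathbb R^n$ by compacta, and vague lower semicontinuity of energy on $\mathfrak M^+(\mathbb R^n)$ then deliver, along a subsequence, $\nu_k^\pm\to\nu^\pm$ vaguely with $\nu^\pm\in\mathcal E^+_\alpha$; uniqueness of the Hahn decomposition of $T$ upgrades this to vague convergence of the whole sequence $\nu_k\to\nu:=\nu^+-\nu^-$.

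The main obstacle is the local mass bound $\sup_k[\nu_k^+(K)+\nu_k^-(K)]<\infty$ from the single piece of information $\sup_k\|\nu_k^+-\nu_k^-\|_{\kappa_\alpha}<\infty$, since the individual self-energies $\|\nu_k^\pm\|_{\kappa_\alpha}$ and the cross energy $E_{\kappa_\alpha}(\nu_k^+,\nu_k^-)$ need not be separately bounded. The argument must be carried out locally, combining the lower energy bound $E_{\kappa_\alpha}(\nu_k^\pm|_{K\cap B_j})\geqslant\nu_k^\pm(K)^2/C_\alpha(K\cap B_j)$ with the key upper bound $E_{\kappa_\alpha}(\nu_k^+|_{K\cap B_1},\nu_k^-|_{K\cap B_2})\leqslant\delta_K^{\alpha-n}\,\nu_k^+(K)\,\nu_k^-(K)$ coming from the separation of the plates on compacta, and ruling out blow-up of the local masses along a subsequence by contradiction with the Cauchy property.
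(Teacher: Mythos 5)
The paper does not actually prove Theorem~\ref{th-complete}; it simply cites Zorii's result \cite[Theorem~1]{ZUmzh} and, in the accompanying footnote, indicates only that the proof there ``is based on Deny's theorem'' realizing the completion of $\mathcal E_\alpha$ by distributions of finite energy. Your starting point is therefore the same as the one the authors attribute to \cite{ZUmzh}, and the sign-structure argument you give (testing the distributional limit $T$ against nonnegative $\phi\in C_c^\infty$ supported off $B_2$, respectively off $B_1$, to produce the Hahn decomposition $T=\nu^+-\nu^-$ with $\nu^\pm$ concentrated in $B_1$, $B_2$) is a sound way to promote $T$ to a signed Radon measure with the required supports, using exactly the disjointness and closedness of the plates.

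The proposal has, however, a genuine gap in the step that is supposed to yield $\nu^\pm\in\mathcal E_\alpha^+$. You correctly remark earlier that only $\|\nu_k^+-\nu_k^-\|_{\kappa_\alpha}$ is controlled, and that the individual self-energies $\|\nu_k^\pm\|_{\kappa_\alpha}$ (and the cross energy) need not be bounded. But then the final sentence of that paragraph invokes ``vague lower semicontinuity of energy on $\mathfrak M^+(\mathbb R^n)$'' to conclude $\nu^\pm\in\mathcal E_\alpha^+$. Lower semicontinuity only gives $E_{\kappa_\alpha}(\nu^\pm)\leqslant\liminf_k E_{\kappa_\alpha}(\nu_k^\pm)$, and this is vacuous precisely when the right-hand side is $+\infty$ --- a possibility your own earlier observation leaves open. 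As written, therefore, the argument does not establish that $\nu^+$ and $\nu^-$ have finite $\alpha$-Riesz energy; this is the delicate heart of the theorem and needs a different mechanism (exploiting the Cauchy structure of $\{\nu_k\}$, not merely its boundedness, together with the condenser geometry), which your sketch does not supply.

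A secondary issue: the route you propose to the local mass bound, ``combining the lower energy bound ... with the key upper bound ... and ruling out blow-up ... by contradiction with the Cauchy property,'' implicitly treats $\|\nu_k\|_{\kappa_\alpha}$ as if it controlled the energies of restrictions of $\nu_k^\pm$ to compacta, which it does not: the energy does not split over $K$ and $K^c$. Once Deny's theorem gives distributional convergence $\nu_k\to T$ in $\mathcal D'(\mathbb R^n)$, a much cleaner and actually correct argument is available: for any compact $K$, the set $K\cap B_1$ is compact and disjoint from the closed set $B_2$, hence at positive Euclidean distance from it; choose $\phi\in C_c^\infty$ with $0\leqslant\phi\leqslant1$, $\phi=1$ on $K\cap B_1$ and $\mathrm{supp}\,\phi\cap B_2=\varnothing$; then $\nu_k^+(K)=\nu_k^+(K\cap B_1)\leqslant\langle\phi,\nu_k\rangle\to\langle\phi,T\rangle<\infty$, and symmetrically for $\nu_k^-$. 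This replaces the energy accounting on compacta by the distributional convergence you have already secured, and should be substituted for the sketch you gave.
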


From now on we fix a (particular) condenser $\mathbf A=(A_1,A_2)$ in $\mathbb R^n$ with the plates
$A_2:=D^c$ and $A_1:=F$, where
$F\subseteqq D$ is closed in the relative topology of~$D$ and
$C_\alpha(F)>0$. Thus (see also Section~\ref{sec:bala}),
\begin{equation}\label{nonzero2}C_\alpha(A_i)>0\quad\text{for all \ }i=1,\,2.\end{equation}

Also fix a unit two-dimensional numerical vector ${\mathbf 1}=(1,1)$, and define
\[\mathcal E_\alpha(\mathbf A,\mathbf 1):=\bigl\{\mu\in\mathcal E_\alpha(\mathbf
A): \ \mu^+(A_1)=\mu^-(A_2)=1\bigr\}.\] In view of~(\ref{nonzero2}), the class $\mathcal E_\alpha(\mathbf A,\mathbf 1)$ is nonempty and, hence, it makes sense to
consider the variational problem on the existence of
$\lambda_{\mathbf A}\in\mathcal E_\alpha(\mathbf A,\mathbf 1)$ with
\begin{equation}\label{inf}E_\alpha(\lambda_{\mathbf A})=
\inf_{\mu\in\mathcal E_\alpha(\mathbf A,\mathbf
1)}\,E_\alpha(\mu)\quad\bigl({}=:E_\alpha(\mathbf A,\mathbf
1)\bigr).\end{equation} In particular, the following theorem on the solvability
holds (see~\cite[Theorems~5,~7]{Z1}).\footnote{The first result of this type was obtained in~\cite[Theorem~1]{Z0}, where $\alpha=2$ and $A_1$ was assumed to be compact in~$D$.
See also~\cite[Theorem 12]{ZUmzh} where
Theorem~\ref{z1} has been generalized to any $\alpha\in(0,n)$ and any $\mathbf a=(a_1,a_2)$
with $a_i>0$, $i=1,2$. Instead of the balayage technique, which implicitly appears in Theorem~\ref{z1}, for $\alpha\in(2,n)$ one should use the operator of orthogonal projection
in the pre-Hilbert space~$\mathcal E_\alpha$ onto the convex cone~$\mathcal E^+_\alpha(D^c)$.}

\begin{theorem}\label{z1} Let
\begin{equation*}\label{dist}
\inf_{x\in F, \ y\in D^c}\,|x-y|>0.\end{equation*} If, moreover, $C_\alpha(F)<\infty$ then,
for a solution to the minimum energy problem\/~{\rm(\ref{inf})} to exist, it is necessary and sufficient that either\/ $C_\alpha(D^c)<\infty$
or\/ $D^c$ be not\/ $\alpha$-thin at\/~$\omega$.\footnote{We refer
to~\cite{Z0,ZPot2} for an example of a set with infinite
Newtonian capacity, though $2$-thin at~$\omega$.}
\end{theorem}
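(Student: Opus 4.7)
The plan is to prove both directions of the equivalence: sufficiency by a minimizing-sequence argument in the strongly complete pre-Hilbert space $\mathcal E_\alpha(\mathbf A)$ (Theorem~\ref{th-complete}), with the main difficulty being the preservation of total mass in the limit; necessity by constructing a minimizing sequence whose negative part leaks mass at $\omega$ when both hypotheses on $D^c$ fail.

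For sufficiency, assume $\mathrm{dist}(F,D^c)>0$, $C_\alpha(F)<\infty$, and either (i) $C_\alpha(D^c)<\infty$ or (ii) $D^c$ is not $\alpha$-thin at $\omega$. First I would observe that $E_\alpha(\mathbf A,\mathbf 1)<\infty$: the separation makes $\kappa_\alpha$ uniformly bounded on $F\times D^c$, so the cross-term $E_\alpha(\mu^+,\mu^-)$ is bounded, and admissible measures with finite self-energies exist on each plate. Given a minimizing sequence $\mu_k\in\mathcal E_\alpha(\mathbf A,\mathbf 1)$, the convexity of this class combined with the parallelogram identity yields
\[
\|\mu_j-\mu_k\|_\alpha^2\le 2\bigl(E_\alpha(\mu_j)-E_\alpha(\mathbf A,\mathbf 1)\bigr)+2\bigl(E_\alpha(\mu_k)-E_\alpha(\mathbf A,\mathbf 1)\bigr)\to 0,
\]
so $(\mu_k)$ is strongly Cauchy, and Theorem~\ref{th-complete} produces a strong-and-vague limit $\lambda\in\mathcal E_\alpha(\mathbf A)$ with $E_\alpha(\lambda)=E_\alpha(\mathbf A,\mathbf 1)$.

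The main obstacle is verifying $\lambda\in\mathcal E_\alpha(\mathbf A,\mathbf 1)$, i.e.\ ruling out mass escape to $\omega$: one needs $\lambda^+(F)=\lambda^-(D^c)=1$. Using the positive plate separation, I would test against bump functions supported near a single plate to extract separate vague convergence $\mu_k^\pm\to\lambda^\pm$, giving the one-sided bounds $\lambda^+(F)\le 1$ and $\lambda^-(D^c)\le 1$ by vague lower semicontinuity of total mass. For $\lambda^+(F)=1$, a tightness argument based on $C_\alpha(F)<\infty$ works: any mass that $\mu_k^+$ places far from the origin generates self-energy bounded below by the reciprocal capacity of the tail, which diverges as the tail shrinks, contradicting the uniform energy bound. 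For $\lambda^-(D^c)=1$ the dichotomy enters: under~(i), the same tightness argument applies with $D^c$ in place of $F$; under~(ii), I would invoke Theorem~\ref{th-bala} together with~(\ref{bala})---the balayage $\beta^\alpha_{D^c}\mu_k^+$ has total mass exactly~$1$ and is the orthogonal projection of $\mu_k^+$ onto $\mathcal E^+_\alpha(D^c)$, so $\mu_k^-$ may be replaced by $\beta^\alpha_{D^c}\mu_k^+$ without increasing the energy, reducing to a minimizing sequence of the form $\mu_k^+-\beta^\alpha_{D^c}\mu_k^+$ whose negative part has mass~$1$ by construction.

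For necessity, suppose both $C_\alpha(D^c)=\infty$ and $D^c$ is $\alpha$-thin at~$\omega$. By Theorem~\ref{th-bala}, the balayage $\beta^\alpha_{D^c}\mu^+$ of any $\mu^+\in\mathcal E^+_\alpha(F,1)$ has mass $q<1$. I would distribute the missing $1-q$ of mass on pieces of $D^c$ inside $\{|x|>R_k\}$ with $R_k\uparrow\infty$, using $C_\alpha(D^c)=\infty$ to supply arbitrarily distant compact subsets of $D^c$ of positive capacity; the large distance makes both the added self-energy and its cross-energy with $\mu^+$ tend to~$0$. The resulting sequence $\mu_k\in\mathcal E_\alpha(\mathbf A,\mathbf 1)$ is minimizing with $E_\alpha(\mu_k)\to E_\alpha(\mu^+-\beta^\alpha_{D^c}\mu^+)=E_{g^\alpha_D}(\mu^+)$ (cf.~(\ref{EG})), but its vague limit loses $1-q$ units of mass on $D^c$; by the uniqueness of the orthogonal projection in~(\ref{bala}), no genuine minimizer can exist.
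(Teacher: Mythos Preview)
The paper does not contain its own proof of this theorem: it is quoted as a known result from~\cite[Theorems~5,~7]{Z1} (see also the footnote citing~\cite{Z0,ZUmzh}), so there is no in-paper argument to compare against. Your sufficiency outline is broadly in the spirit of the machinery the paper does develop---minimizing sequences, strong completeness via Theorem~\ref{th-complete}, and balayage to control the negative plate---and with one adjustment it can be made to work: in case~(ii) you should not try to prove tightness of $\beta^\alpha_{D^c}\mu_k^+$ directly, but rather, after securing $\lambda^+(F)=1$, simply set $\lambda^-:=\beta^\alpha_{D^c}\lambda^+$; Theorem~\ref{th-bala} gives $\lambda^-(D^c)=1$ and~(\ref{bala}) gives $E_\alpha(\lambda^+-\lambda^-)\leqslant E_\alpha(\mathbf A,\mathbf 1)$, so this measure is a minimizer.

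Your necessity argument, however, has two genuine gaps. First, Theorem~\ref{th-bala} as stated says only that $\alpha$-thinness of $D^c$ at~$\omega$ is equivalent to the \emph{existence} of some bounded $\mu\geqslant0$ with $\beta^\alpha_{D^c}\mu(\mathbb R^n)<\mu(\mathbb R^n)$; it does not immediately give you strict mass loss for \emph{every} $\mu^+\in\mathcal E^+_\alpha(F,1)$. You would need an additional argument (e.g.\ via the integral representation~(\ref{repr}) together with strict inequality for each $\beta^\alpha_{D^c}\varepsilon_y$, $y\in D$) to get $q<1$ uniformly. Second, and more seriously, exhibiting a single minimizing sequence whose vague limit loses mass does not by itself preclude the existence of a minimizer: you must show that the infimum $E_\alpha(\mathbf A,\mathbf 1)$ is not attained by \emph{any} element of $\mathcal E_\alpha(\mathbf A,\mathbf 1)$. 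The standard route is to prove that any putative minimizer $\lambda$ must satisfy $\lambda^-=\beta^\alpha_{D^c}\lambda^+$ (by the strict inequality in~(\ref{bala})) and then derive $\lambda^-(D^c)<1$ from thinness, a contradiction; your closing sentence gestures at this but does not carry it out, and the preceding construction of $\mu_k$ is not what delivers the contradiction.
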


In the paper, we are mainly interested in the case
\begin{equation*}\label{int}\overline{F}\cap\partial_{\overline{\mathbb R^n}}D
\ne\varnothing,\end{equation*}
where $\overline{\mathbb R^n}:=\mathbb R^n\cup\{\omega\}$ is the one-point compactification of $\mathbb R^n$ and $\overline{F}:={\mathrm C\ell}_{\overline{\mathbb R^n}}F$.
Then, in general, the infimum
value in~(\ref{inf}) can not be achieved among $\mu\in\mathcal E_\alpha(\mathbf A,\mathbf
1)$. Using the physical interpretation, which is possible for the Coulomb
kernel, a short-circuit between the plates
of the condenser might occur.

Therefore, it is meaningful to investigate what kind of conditions on the objects under consideration would prevent such a phenomenon, and
a minimizer in the corresponding minimum $\alpha$-Riesz energy problem for the condenser~$\mathbf A$ would, nevertheless,
exist. One of the ideas, to be discussed, is to find out such an upper constraint on the measures (charges) from $\mathcal
E_\alpha^+(F,1)$ which would not allow the ``blow-up" effect between $F$ and $D^c$. Note that we do not intend to impose any constraint on the measures on~$D^c$.

Assume also the measures from $\mathcal
E_\alpha^+(F)$ to be influenced by some external field~$f$, while $\langle f,\mu\rangle=0$ for all $\mu\in\mathcal
E_\alpha^+(D^c)$. Then, what kind of external fields, acting on the charges on~$F$ only, would still guarantee the existence of minimizers?

Recently a similar problem for the logarithmic kernel in $\mathbb R^2$ has been investigated by Beckermann and Gryson~\cite[Theorem~2.2]{BC}. Our study is related to the Riesz
kernels, and the results obtained and the approaches applied are rather different from those in~\cite{BC}.

\section{Constrained and unconstrained minimum energy problems}\label{sec-pr}
When speaking of the external field~$f$, we shall tacitly assume that at least one of the following Cases~I or~II takes place:
\begin{itemize}
\item[\rm I.] {\it $f\bigl|_F$ is lower semicontinuous, and it is\/ ${}\geqslant0$ unless\/ $F$ is compact, while\/ $f(x)=0$ n.e.~in\/}~$D^c$;
\item[\rm II.] {\it $f(x)=U_\alpha^{\zeta-\beta^\alpha_{D^c}\zeta}(x)$ for all\/ $x\in\mathbb R^n$, where a\/ {\rm(}signed\/{\rm)} measure\/ $\zeta\in\mathcal E_{g^\alpha_D}(D)$ is given.}
\end{itemize}
Note that Case~II can certainly be reduced to Case~I provided that $\zeta^-=0$.

The values $G_{\alpha,f}(\mu)$, $\mu\in\mathcal E_\alpha(\mathbf A)$, and $G_{g^\alpha_D,f}(\nu)$, $\nu\in\mathcal E^+_{g^\alpha_D}(F)$, are well defined
and
\begin{align}\label{alphafinite}
G_{\alpha,f}(\mu)&=\|\mu\|^2_\alpha+2\langle f,\mu^+\rangle>-\infty,\\
\label{wen}G_{g^\alpha_D,f}(\nu)&=\|\nu\|^2_{g^\alpha_D}+2\langle f,\nu\rangle>-\infty.
\end{align}
If Case~II holds, then these values can alternatively be expressed in the form
\begin{align}\label{alphafinite0}
G_{\alpha,f}(\mu)&=\|\mu\|^2_\alpha+2E_\alpha(\zeta-\beta^\alpha_{D^c}\zeta,\mu)=\|\mu+\zeta-\beta^\alpha_{D^c}\zeta\|^2_\alpha-\|\zeta-\beta^\alpha_{D^c}\zeta\|^2_\alpha,\\
\label{wen0}G_{g^\alpha_D,f}(\nu)&=\|\nu\|^2_{g^\alpha_D}+2E_{g^\alpha_D}(\zeta,\nu)=
\|\nu+\zeta\|^2_{g^\alpha_D}-\|\zeta\|^2_{g^\alpha_D}.
\end{align}

Write \[\mathcal E_{\alpha,f}(\mathbf A,\mathbf 1):=\mathcal E_\alpha(\mathbf A,\mathbf 1)\cap\mathcal E_{\alpha,f}(\mathbb R^n)\quad\text{and}\quad\mathcal
E^+_{g^\alpha_D,f}(F,1):=\mathcal E^+_{g^\alpha_D}(F,1)\cap\mathcal E_{g^\alpha_D,f}(D);\]
these classes of measures are convex. It is seen from~(\ref{alphafinite0}) and~(\ref{wen0}) that, in Case~II,
\begin{equation}\label{identtt}\mathcal E_{\alpha,f}(\mathbf A,\mathbf 1)=\mathcal E_\alpha(\mathbf A,\mathbf 1)\text{ \ and \ }\mathcal E^+_{g^\alpha_D,f}(F,1)=\mathcal
E^+_{g^\alpha_D}(F,1).\end{equation}

We denote by $\mathfrak C(F)$ the collection of all $\xi\in\mathfrak M^+(D)$ with the properties
\[S^\xi_D=F\quad\text{and}\quad\xi(F)>1;\]
such~$\xi$ will be treated as {\it constraints}\/ for measures from the class~$\mathfrak M^+(F,1)$. Let $\mathfrak C_0(F)$ consist of all bounded $\xi\in\mathfrak C(F)$, i.e.,
with $\xi(F)<\infty$. Given $\xi\in\mathfrak C(F)$,  write
\begin{align*}
\mathcal E^{\xi}_{\alpha,f}(\mathbf A,\mathbf 1)&:=
\bigl\{\mu\in\mathcal E_{\alpha,f}(\mathbf A,\mathbf 1): \
\mu^+\leqslant\xi\bigr\},\\
\mathcal E^{\xi}_{g^\alpha_D,f}(F,1)&:=
\bigl\{\nu\in\mathcal E_{g^\alpha_D,f}^+(F,1): \
\nu\leqslant\xi\bigr\},\end{align*}
where $\nu_1\leqslant\nu_2$
means that $\nu_2-\nu_1$ is a nonnegative Borel measure.

To combine (where this is possible) assertions related to extremal problems in both the constrained and unconstrained settings, we accept the notations
\[\mathcal E^{\infty}_{\alpha,f}(\mathbf A,\mathbf 1):=\mathcal E_{\alpha,f}(\mathbf A,\mathbf 1)\quad\text{and}\quad
\mathcal E^{\infty}_{g^\alpha_D,f}(F,1):=\mathcal E^+_{g^\alpha_D,f}(F,1).\]

In all that follows, we consider a fixed $\sigma\in\mathfrak C(F)\cup\{\infty\}$, where the formal formula $\sigma=\infty$ means that {\it no}\/ upper constraint is
allowed,\footnote{It is natural to set $\nu\leqslant\infty$ for all $\nu\in\mathfrak M^+(F)$.} and we define
\begin{align*}G_{\alpha,f}^{\sigma}(\mathbf A,\mathbf
1)&:=\inf_{\mu\in\mathcal E^{\sigma}_{\alpha,f}(\mathbf A,\mathbf
1)}\,G_{\alpha,f}(\mu),\\
G_{g^\alpha_D,f}^\sigma(F,1)&:=\inf_{\nu\in\mathcal
E_{g^\alpha_D,f}^\sigma(F,1)}\,G_{g^\alpha_D,f}(\nu).\end{align*}
In the case $\sigma=\infty$, the upper index~$\infty$ will often be omitted, i.e., we shall write
\[G_{\alpha,f}(\mathbf A,\mathbf
1):=G_{\alpha,f}^{\infty}(\mathbf A,\mathbf
1)\quad\text{and}\quad G_{g^\alpha_D,f}(F,1):=G_{g^\alpha_D,f}^\infty(F,1).\]

Note that each of $G_{\alpha,f}^{\sigma}(\mathbf A,\mathbf
1)$ and $G_{g^\alpha_D,f}^\sigma(F,1)$ is ${}>-\infty$. One can also see from~(\ref{nonzero2}) and Lemma~\ref{lemma:equiv} that any of the classes $\mathcal
E^{\sigma}_{\alpha,f}(\mathbf A,\mathbf
1)$ and $\mathcal
E_{g^\alpha_D,f}^\sigma(F,1)$ is nonempty if and only if so is the other, and therefore the following two assumptions are equivalent:\footnote{See Lemmas~\ref{autom0},~\ref{autom} and Remark~\ref{rem-aut} below, providing necessary and/or sufficient conditions for (\ref{riesz}) and~(\ref{green}) to hold.}
\begin{align}\label{riesz}
G_{\alpha,f}^{\sigma}(\mathbf A,\mathbf
1)&<\infty,\\
\label{green}
 G_{g^\alpha_D,f}^\sigma(F,1)&<\infty.
\end{align}

\begin{problem}\label{pr} Under condition\/ {\rm(\ref{riesz})}, does there  exist\/ $\lambda^{\sigma}_{\mathbf A}\in\mathcal
E^{\sigma}_{\alpha,f}(\mathbf A,\mathbf 1)$ whose\/ $f$-weighted\/ $\alpha$-Riesz energy is minimal in this class, i.e.
\begin{equation}\label{inf2}G_{\alpha,f}(\lambda^{\sigma}_{\mathbf A})=
G_{\alpha,f}^{\sigma}(\mathbf A,\mathbf
1)\,?\end{equation}\end{problem}

Problem~\ref{pr} turns out to be intricately related to the following one.

\begin{problem}\label{pr2} Under condition\/ {\rm(\ref{green})}, does there  exist\/ $\lambda^\sigma_{F}\in\mathcal
E_{g^\alpha_D,f}^\sigma(F,1)$ whose\/ $f$-weighted\/ $\alpha$-Green energy is minimal in this class, i.e.
\begin{equation}\label{CF}
G_{g^\alpha_D,f}(\lambda^\sigma_{F})=G_{g^\alpha_D,f}^\sigma(F,1)\,?\end{equation}
\end{problem}

Note that, if $\sigma=\infty$ and $f=0$, then Problem~\ref{pr} is in fact reduced to the minimum energy problem~(\ref{inf}), while Problem~\ref{pr2} to that appeared
in~(\ref{cap}) for $B=F$ and $\kappa=g^\alpha_D$.

The rest of the paper is organized as follows. Sufficient and/or necessary conditions for Problems~\ref{pr} and~\ref{pr2} to be solvable are established in Sections~\ref{geom},
\ref{sec-main-var-green} and~\ref{sec-main-var-riesz}. In Section~\ref{geom} they are formulated either in measure theory terms for the constraints under consideration, or in
geometric-potential terms for~$F$ and~$D^c$, while in Sections~\ref{sec-main-var-green} and~\ref{sec-main-var-riesz} they are given in terms of variational inequalities for the
$f$-weight\-ed potentials. Sections~\ref{sec-main-var-green} and~\ref{sec-main-var-riesz} provide also a detailed analysis of the supports of the minimizers. The results
obtained are proved in Sections~\ref{silv-bound-proof}, \ref{infcap-proof}, \ref{lemma-main-proof} and~\ref{ss-1}, and they are illustrated by the examples in
Section~\ref{sec-ex}.

E.g., by Theorem~\ref{lemma-main}, in both Cases~I and~II, the condition $C_{g_D^\alpha}(F)<\infty$ is close to be sufficient for Problem~\ref{pr2} to be solvable for every
$\sigma\in\mathfrak C(F)\cup\{\infty\}$,  while according to Theorem~\ref{infcap}, it is also necessary for this to happen provided Case~II with $\zeta\geqslant0$ holds. However, if we
restricted our analysis to the constraints from the class~$\mathfrak C_0(F)$ then, in~Case~I, Problem~\ref{pr2} would already be always solvable (see Theorem~\ref{silv-bound}).
Further, if we assume $D^c$ to be not $\alpha$-thin at~$\omega$, then all this remains true for Problem~\ref{pr} as well. See Section~\ref{geom} for the strict formulations of
the results just described.

A crucial key in the proofs is Theorem~\ref{alphaeq1}, providing us with a perfectness-type result for the $\alpha$-Green kernel, where $\alpha<2$ (cf.~\cite{E1} for
$\alpha=2$).
It makes it possible, in particular, to establish Theorem~\ref{alphaeq} on the
existence of the $\alpha$-Green equilibrium measure of a set.

The uniqueness of solutions to Problems~\ref{pr} and~\ref{pr2} is shown by Lemma~\ref{uniqueness} in the next section. Another assertion of this section,
Lemma~\ref{lemma-aux''}, discovers an intimate relationship between their solvability (or unsolvability), as well as their minimizers (provided they exist), which turns out to
be a powerful tool in the proofs of the above-mentioned results.

\section{Auxiliary assertions}\label{sec-aux}

\begin{lemma}\label{uniqueness} A solution to Problem\/~{\rm\ref{pr}}, as well as that to Problem\/~{\rm\ref{pr2}}, is unique\/ {\rm(}provided it exists\/{\rm)}.\end{lemma}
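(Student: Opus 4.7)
The plan is to exploit the quadratic-plus-linear structure of the functionals $G_{\alpha,f}$ and $G_{g^\alpha_D,f}$ on the pre-Hilbert spaces $\mathcal E_\alpha(\mathbb R^n)$ and $\mathcal E_{g^\alpha_D}(D)$, and, together with the strict positive definiteness of both kernels (of $\kappa_\alpha$ directly and of $g^\alpha_D$ via~(\ref{EG})), deduce strict convexity. Both problems will be treated in parallel, splitting into Case~II (where a translation reduces matters to Lemma~\ref{convex}) and Case~I (where the parallelogram identity is applied directly).

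First I would verify that all admissible classes are convex. For $\mathcal E^{\sigma}_{g^\alpha_D,f}(F,1)$ this is immediate from the definitions. For $\mathcal E^{\sigma}_{\alpha,f}(\mathbf A,\mathbf 1)$ the crucial observation is that the plates $A_1$ and $A_2$ are disjoint: if $\mu_1,\mu_2\in\mathfrak M(\mathbf A)$ and $t\in[0,1]$, then the Hahn--Jordan decomposition of $\mu_t:=(1-t)\mu_1+t\mu_2$ satisfies $\mu_t^\pm=(1-t)\mu_1^\pm+t\mu_2^\pm$, so the normalization $\mu_t^+(A_1)=\mu_t^-(A_2)=1$ and the upper constraint $\mu_t^+\leqslant\sigma$ are preserved.

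In Case~II, identities~(\ref{alphafinite0}) and~(\ref{wen0}) together with~(\ref{identtt}) allow us to rewrite
\[G_{\alpha,f}(\mu)=\bigl\|\mu+\zeta-\beta^\alpha_{D^c}\zeta\bigr\|_\alpha^2-\bigl\|\zeta-\beta^\alpha_{D^c}\zeta\bigr\|_\alpha^2,\qquad G_{g^\alpha_D,f}(\nu)=\|\nu+\zeta\|_{g^\alpha_D}^2-\|\zeta\|_{g^\alpha_D}^2,\]
so that Problems~\ref{pr} and~\ref{pr2} become minimum-norm problems over the convex translates $\mathcal E^{\sigma}_{\alpha,f}(\mathbf A,\mathbf 1)+(\zeta-\beta^\alpha_{D^c}\zeta)$ in $\mathcal E_\alpha(\mathbb R^n)$ and $\mathcal E^{\sigma}_{g^\alpha_D,f}(F,1)+\zeta$ in $\mathcal E_{g^\alpha_D}(D)$, respectively. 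Uniqueness then follows at once from Lemma~\ref{convex} applied in the corresponding pre-Hilbert space.

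In Case~I, since $f=0$ n.e.\ on $D^c$ and $\mu^-\in\mathcal E_\alpha^+(D^c)$ does not charge sets of $\alpha$-Riesz capacity zero, we have $\langle f,\mu\rangle=\langle f,\mu^+\rangle$, so by~(\ref{alphafinite}) $G_{\alpha,f}(\mu)=\|\mu\|_\alpha^2+2\langle f,\mu^+\rangle$; by the linearity of $\mu\mapsto\mu^+$ along convex combinations established above, the last term is linear in $\mu$. Given two solutions $\mu_1,\mu_2$ of Problem~\ref{pr} with common minimal value $G^*$, the parallelogram identity applied to $\|\cdot\|_\alpha$ yields
\[G_{\alpha,f}\!\left(\tfrac{\mu_1+\mu_2}{2}\right)=G^*-\tfrac{1}{4}\|\mu_1-\mu_2\|_\alpha^2.\]
Admissibility of the midpoint forces $\|\mu_1-\mu_2\|_\alpha=0$, and strict positive definiteness of $\kappa_\alpha$ gives $\mu_1=\mu_2$. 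The argument for Problem~\ref{pr2} in Case~I is entirely analogous, with $\kappa_\alpha$ replaced by $g^\alpha_D$ and $\mu^+$ by $\nu$. The main subtlety is precisely the Jordan-decomposition behaviour of convex combinations in the condenser class; once secured via the disjointness of $A_1$ and $A_2$, the rest is the standard strict-convexity argument in a pre-Hilbert space.
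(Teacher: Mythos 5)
Your proposal is correct and takes essentially the same approach as the paper: both rest on convexity of the admissible class, the parallelogram identity in the relevant pre-Hilbert space ($\mathcal E_{g^\alpha_D}(D)$ or $\mathcal E_\alpha(\mathbb R^n)$), and strict positive definiteness of the kernel. The paper in fact handles Cases~I and~II uniformly via the parallelogram identity applied to $G=\|\cdot\|^2+2\langle f,\cdot\rangle$, using only that $\langle f,\cdot\rangle$ is linear; your Case~II reduction to Lemma~\ref{convex} by translation and your explicit check that $\mu\mapsto\mu^\pm$ is affine on $\mathfrak M(\mathbf A)$ (valid because $A_1\cap A_2=\varnothing$) are small, correct elaborations rather than a different route.
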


\begin{proof} We shall verify the latter part of the lemma. Assume there exist two solutions to Problem~\ref{pr2}, $\lambda^\sigma_{F}$ and $\hat{\lambda}^\sigma_{F}$.
Since the class
$\mathcal E^\sigma_{g^\alpha_D,f}(F,1)$ is convex, from (\ref{wen}) we get
\[4G^\sigma_{g^\alpha_D,f}(F,1)\leqslant4G_{g^\alpha_D,f}\Bigl(\frac{\lambda^\sigma_{F}+\hat{\lambda}^\sigma_{F}}{2}\Bigr)=
\|\lambda^\sigma_{F}+\hat{\lambda}^\sigma_{F}\|^2_{g^\alpha_D}+
4\langle f,\lambda^\sigma_{F}+\hat{\lambda}^\sigma_{F}\rangle.\]
Applying the parallelogram
identity in $\mathcal E_{g^\alpha_D}(D)$, we obtain
\begin{equation*}0\leqslant\|\lambda^\sigma_{F}-\hat{\lambda}^\sigma_{F}\|^2_{g^\alpha_D}\leqslant-
4G^\sigma_{g^\alpha_D,f}(F,1)+
2G_{g^\alpha_D,f}(\lambda^\sigma_{F})+2G_{g^\alpha_D,f}(\hat{\lambda}^\sigma_{F}),\end{equation*}
 so that $\|\lambda^\sigma_{F}-\hat{\lambda}^\sigma_{F}\|_{g^\alpha_D}=0$ by (\ref{CF}).  As $g^\alpha_D$ is strictly positive definite,
$\lambda^\sigma_{F}=\hat{\lambda}^\sigma_{F}$.

Likewise, the former part of the lemma can be proved with the help of the convexity of the class $\mathcal E^\sigma_{\alpha,f}(\mathbf A,\mathbf 1)$ and the pre-Hilbert
structure in the space~$\mathcal E_\alpha(\mathbb R^n)$.\end{proof}

\begin{lemma}\label{lemma-aux''}Assume\/ $D^c$ to be not\/ $\alpha$-thin at\/~$\omega$.  Then for every\/ $\sigma\in\mathfrak C(F)\cup\{\infty\}$,
\begin{equation}\label{equality}G^\sigma_{g^\alpha_D,f}(F,1)=G^{\sigma}_{\alpha,f}(\mathbf A,\mathbf 1).\end{equation}
In addition, the solution to Problem\/~{\rm\ref{pr}} exists if and only if so does that to Problem\/~{\rm\ref{pr2}}, and then they are related to each other by the formula
\begin{equation}\label{reprrr}
\lambda^{\sigma}_{\mathbf A}=\lambda^\sigma_{F}-\beta^\alpha_{D^c}\lambda^\sigma_{F}.
\end{equation}
\end{lemma}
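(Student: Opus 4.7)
The plan is to set up a bijection $\Phi\colon \mathcal E^\sigma_{g^\alpha_D,f}(F,1)\to\mathcal E^\sigma_{\alpha,f}(\mathbf A,\mathbf 1)$ that preserves the weighted energies, and then to exploit uniqueness (Lemma~\ref{uniqueness}) to match minimizers.

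First I would define $\Phi(\nu):=\nu-\beta^\alpha_{D^c}\nu$. Since $\nu\in\mathfrak M^+(F,1)$ is bounded and $D^c$ is not $\alpha$-thin at $\omega$, Theorem~\ref{th-bala} gives $\beta^\alpha_{D^c}\nu(\mathbb R^n)=1$, while $F\subset D$ makes the supports of $\nu$ and $\beta^\alpha_{D^c}\nu$ disjoint, so $\Phi(\nu)^+=\nu$ and $\Phi(\nu)^-=\beta^\alpha_{D^c}\nu$ have the required masses. By (\ref{EG}), $\|\Phi(\nu)\|_\alpha^2=E_{g^\alpha_D}(\nu)<\infty$, and the constraint $\nu\leqslant\sigma$ transfers to $\Phi(\nu)^+\leqslant\sigma$, so $\Phi(\nu)$ lies in the Riesz class modulo the weighted-energy finiteness, which I settle together with the energy identity below. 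Next I would verify $G_{\alpha,f}(\Phi(\nu))=G_{g^\alpha_D,f}(\nu)$. Formula (\ref{alphafinite}) is available in both Cases~I and~II because $f=0$ n.e.\ on $D^c$ (in Case~II this follows from $U_\alpha^{\beta^\alpha_{D^c}\zeta^\pm}=U_\alpha^{\zeta^\pm}$ n.e.\ on $D^c$ and the fact that measures of finite $\alpha$-Riesz energy are $C$-absolutely continuous). Combining this with $\|\Phi(\nu)\|_\alpha^2=\|\nu\|_{g^\alpha_D}^2$ and observing that on $F$ one has $f=U_{g^\alpha_D}^\zeta$ in Case~II (so that the $\langle f,\nu\rangle$ terms coincide across the two settings), the identity follows. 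Hence $G^\sigma_{\alpha,f}(\mathbf A,\mathbf 1)\leqslant G^\sigma_{g^\alpha_D,f}(F,1)$.

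For the reverse inequality, I would fix an arbitrary $\mu\in\mathcal E^\sigma_{\alpha,f}(\mathbf A,\mathbf 1)$, set $\nu:=\mu^+\in\mathfrak M^+(F,1)$, and apply (\ref{bala}) together with (\ref{EG}):
\[\|\mu\|_\alpha^2=\|\mu^+-\mu^-\|_\alpha^2\geqslant\bigl\|\mu^+-\beta^\alpha_{D^c}\mu^+\bigr\|_\alpha^2=E_{g^\alpha_D}(\mu^+),\]
with strict inequality unless $\mu^-=\beta^\alpha_{D^c}\mu^+$. Using (\ref{alphafinite}) again, this yields
\[G_{\alpha,f}(\mu)=\|\mu\|_\alpha^2+2\langle f,\mu^+\rangle\geqslant E_{g^\alpha_D}(\nu)+2\langle f,\nu\rangle=G_{g^\alpha_D,f}(\nu)\geqslant G^\sigma_{g^\alpha_D,f}(F,1),\]
provided we first check that $\nu\in\mathcal E^\sigma_{g^\alpha_D,f}(F,1)$; but $E_{g^\alpha_D}(\nu)$ is finite by (\ref{EG}), $\langle f,\nu\rangle$ is finite since $G_{\alpha,f}(\mu)$ and $\|\mu\|_\alpha^2$ are, and the constraint $\nu\leqslant\sigma$ is immediate. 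Taking infima proves (\ref{equality}), and we have in addition characterized equality: $G_{\alpha,f}(\mu)=G_{g^\alpha_D,f}(\mu^+)$ if and only if $\mu^-=\beta^\alpha_{D^c}\mu^+$.

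The correspondence of minimizers is then immediate. If $\lambda^\sigma_F$ solves Problem~\ref{pr2}, then $\Phi(\lambda^\sigma_F)$ attains the Riesz infimum by the energy identity and (\ref{equality}), so it is the (unique, by Lemma~\ref{uniqueness}) solution $\lambda^\sigma_{\mathbf A}$, giving (\ref{reprrr}). Conversely, if $\lambda^\sigma_{\mathbf A}$ solves Problem~\ref{pr}, then equality must hold throughout the displayed chain with $\mu=\lambda^\sigma_{\mathbf A}$, which forces $(\lambda^\sigma_{\mathbf A})^-=\beta^\alpha_{D^c}(\lambda^\sigma_{\mathbf A})^+$ and shows that $(\lambda^\sigma_{\mathbf A})^+$ solves Problem~\ref{pr2}, so equals $\lambda^\sigma_F$ by uniqueness, again delivering (\ref{reprrr}). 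The main obstacle is the bookkeeping in Case~II: one must verify that $\Phi(\nu)^-=\beta^\alpha_{D^c}\nu$ contributes nothing to the $\kappa_\alpha$-weighted-potential inner product and that the two external-field pairings coincide, which reduces to the balayage identity $U_\alpha^{\beta^\alpha_{D^c}\zeta^\pm}=U_\alpha^{\zeta^\pm}$ n.e.\ on $D^c$ applied separately to $\zeta^+$ and $\zeta^-$, together with the $C$-absolute continuity of finite-energy measures.
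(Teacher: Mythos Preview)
Your argument is correct and follows essentially the same route as the paper's proof: both establish $G^\sigma_{\alpha,f}(\mathbf A,\mathbf 1)\leqslant G^\sigma_{g^\alpha_D,f}(F,1)$ via the map $\nu\mapsto\nu-\beta^\alpha_{D^c}\nu$ combined with~(\ref{EG}) and Theorem~\ref{th-bala}, obtain the reverse inequality by applying~(\ref{bala}) to $\mu^+$, and read off the correspondence of minimizers from the equality cases. One terminological quibble: the map $\Phi$ you introduce is not a bijection onto $\mathcal E^\sigma_{\alpha,f}(\mathbf A,\mathbf 1)$ (its image consists only of those $\mu$ with $\mu^-=\beta^\alpha_{D^c}\mu^+$), but your argument never actually uses surjectivity, so this is harmless; also, the Case~II bookkeeping you carry out at the end is already packaged in the paper's formulas~(\ref{alphafinite}) and~(\ref{wen}), which hold in both cases.
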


\begin{proof} We begin by establishing the inequality
\begin{equation}\label{ineq}G^\sigma_{g^\alpha_D,f}(F,1)\geqslant G^{\sigma}_{\alpha,f}(\mathbf A,\mathbf 1).\end{equation}
Having assumed $G^\sigma_{g^\alpha_D,f}(F,1)<\infty$, we fix $\nu\in\mathcal E^\sigma_{g^\alpha_D,f}(F,1)$; then, by~(\ref{leq-bala}) for $Q=D^c$,
\[\nu-\beta^\alpha_{D^c}\nu\in\mathcal E^\sigma_{\alpha,f}(\mathbf A,\mathbf 1).\]
On account of (\ref{EG}), (\ref{alphafinite}) and (\ref{wen}), we therefore get
\[G_{g^\alpha_D,f}(\nu)=\|\nu\|^2_{g^\alpha_D}+2\langle f,\nu\rangle=\|\nu-\beta^\alpha_{D^c}\nu\|^2_\alpha+2\langle
f,\nu\rangle=G_{\alpha,f}(\nu-\beta^\alpha_{D^c}\nu)\geqslant G_{\alpha,f}^\sigma(\mathbf A,\mathbf 1).\]
Since $\nu\in\mathcal E^\sigma_{g^\alpha_D,f}(F,1)$ has been chosen arbitrarily, this yields (\ref{ineq}).

On the other hand,  for any $\mu\in\mathcal E^\sigma_{\alpha,f}(\mathbf A,\mathbf 1)$ we have $\mu^+\in\mathcal
E_{g^\alpha_D,f}^\sigma(F,1)$. Thus, due to~(\ref{bala}),
\begin{align}\label{hope}G_{\alpha,f}(\mu)&=
\|\mu\|^2_\alpha+2\langle f,\mu^+\rangle\geqslant\|\mu^+-\beta^\alpha_{D^c}\mu^+\|^2_\alpha+2\langle f,\mu^+\rangle\\
&{}=\|\mu^+\|^2_{g^\alpha_{D^c}}+2\langle f,\mu^+\rangle=G_{g^\alpha_D,f}(\mu^+)\geqslant G^\sigma_{g^\alpha_D,f}(F,1).\notag\end{align}
In view of the arbitrary choice of $\mu\in\mathcal
E^\sigma_{\alpha,f}(\mathbf A,\mathbf 1)$, this proves~(\ref{equality}) when combined with~(\ref{ineq}).

Let now $\lambda^\sigma_{F}\in\mathcal
E_{g^\alpha_D,f}^\sigma(F,1)$ satisfy~(\ref{CF}). Then, in consequence of (\ref{leq-bala}), \[\widehat\mu:=\lambda^\sigma_{F}-\beta^\alpha_{D^c}\lambda^\sigma_{F}\in\mathcal
E^\sigma_{\alpha,f}(\mathbf A,\mathbf 1).\]
Substituting $\widehat\mu$ instead of $\mu$ in relation~(\ref{hope}), we see that all the inequalities therein are, in fact, equalities. Therefore,
by~(\ref{equality}),
\[G_{\alpha,f}(\widehat\mu)=G^\sigma_{g^\alpha_{D^c},f}(F,1)=
G^\sigma_{\alpha,f}(\mathbf A,\mathbf 1),\]
so that the measure $\lambda^\sigma_{\mathbf A}$, defined by~(\ref{reprrr}), solves Problem~\ref{pr}.

To complete the proof, assume further that $\lambda^{\sigma}_{\mathbf A}=\lambda^+-\lambda^-\in\mathcal
E^{\sigma}_{\alpha,f}(\mathbf A,\mathbf 1)$ satisfies~(\ref{inf2}). Then, by~(\ref{equality}) and~(\ref{hope}), the latter with $\lambda^{\sigma}_{\mathbf A}$ instead of~$\mu$,
\begin{align*}G^\sigma_{g^\alpha_D,f}(F,1)&=G_{\alpha,f}(\lambda^{\sigma}_{\mathbf A})\geqslant\|\lambda^+-\beta^\alpha_{D^c}\lambda^+\|_\alpha^2+2\langle f,\lambda^+\rangle\\
&{}=\|\lambda^+\|^2_{g^\alpha_{D^c}}+2\langle f,\lambda^+\rangle=G_{g^\alpha_D,f}(\lambda^+)\geqslant G^\sigma_{g^\alpha_D,f}(F,1).
\end{align*}
Hence, all the inequalities here are, in fact, equalities. This shows that $\lambda^\sigma_{F}:=\lambda^+$ solves Problem~\ref{pr2} and, on account of~(\ref{bala}), also that
$\lambda^-=\beta^\alpha_{D^c}\lambda^+=\beta^\alpha_{D^c}\lambda^\sigma_{F}$.\end{proof}

\begin{lemma}\label{lequiv} Assume\/ {\rm(\ref{green})} holds. For a measure\/
$\lambda=\lambda^\sigma_{F}\in\mathcal
E_{g^\alpha_D,f}^\sigma(F,1)$ to solve Problem\/~{\rm\ref{pr2}}, it is necessary and sufficient that
\begin{equation}\label{lchar}\bigl\langle
W_{g^\alpha_D,f}^\lambda,\nu-\lambda\bigr\rangle\geqslant0\quad\mbox{for all \ }
\nu\in\mathcal E_{g^\alpha_D,f}^\sigma(F,1).\end{equation}\end{lemma}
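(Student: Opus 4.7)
Proof proposal. The plan is to apply the standard first-order variational argument on the convex admissible set $\mathcal E^\sigma_{g^\alpha_D,f}(F,1)$, exploiting the pre-Hilbert structure of $\mathcal E_{g^\alpha_D}(D)$. Fix $\lambda\in\mathcal E^\sigma_{g^\alpha_D,f}(F,1)$ and an arbitrary $\nu\in\mathcal E^\sigma_{g^\alpha_D,f}(F,1)$, and consider the convex combination $\lambda_t:=(1-t)\lambda+t\nu$ for $t\in[0,1]$, which again lies in the class by convexity. Expanding $\|\lambda_t\|^2_{g^\alpha_D}$ via bilinearity of $E_{g^\alpha_D}(\cdot,\cdot)$ and using linearity of $\langle f,\cdot\rangle$, I would establish the key identity
\begin{equation}\label{expansion}
G_{g^\alpha_D,f}(\lambda_t)=G_{g^\alpha_D,f}(\lambda)+2t\bigl\langle W^\lambda_{g^\alpha_D,f},\,\nu-\lambda\bigr\rangle+t^2\|\nu-\lambda\|^2_{g^\alpha_D},
\end{equation}
where I have identified the cross-term via $E_{g^\alpha_D}(\lambda,\nu-\lambda)+\langle f,\nu-\lambda\rangle=\langle U^\lambda_{g^\alpha_D}+f,\nu-\lambda\rangle=\langle W^\lambda_{g^\alpha_D,f},\nu-\lambda\rangle$ (using Fubini, valid since $E_{g^\alpha_D}(\lambda,\nu)$ is finite by Cauchy--Schwarz in the pre-Hilbert space~$\mathcal E_{g^\alpha_D}(D)$).

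Given (\ref{expansion}), sufficiency is immediate: if (\ref{lchar}) holds, then taking $t=1$ yields $G_{g^\alpha_D,f}(\nu)\geqslant G_{g^\alpha_D,f}(\lambda)+\|\nu-\lambda\|^2_{g^\alpha_D}\geqslant G_{g^\alpha_D,f}(\lambda)$ for every admissible $\nu$, whence $\lambda$ solves Problem~\ref{pr2}. For necessity, assume $\lambda$ solves Problem~\ref{pr2}. Then $G_{g^\alpha_D,f}(\lambda_t)\geqslant G_{g^\alpha_D,f}(\lambda)$ for each $t\in(0,1]$, so (\ref{expansion}) gives
\[2t\bigl\langle W^\lambda_{g^\alpha_D,f},\,\nu-\lambda\bigr\rangle+t^2\|\nu-\lambda\|^2_{g^\alpha_D}\geqslant0.\]
Dividing by $t>0$ and letting $t\to 0^+$ yields (\ref{lchar}).

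The only delicate point is justifying that the cross-term in the expansion is well defined and equal to $\langle W^\lambda_{g^\alpha_D,f},\nu-\lambda\rangle$. For the potential part, since $\lambda,\nu\in\mathcal E_{g^\alpha_D}(D)$, Cauchy--Schwarz gives $|E_{g^\alpha_D}(\lambda,\nu)|\leqslant\|\lambda\|_{g^\alpha_D}\|\nu\|_{g^\alpha_D}<\infty$, so Fubini legitimizes the interchange $\int U^\lambda_{g^\alpha_D}\,d\nu=E_{g^\alpha_D}(\lambda,\nu)$. For the external-field part, $\langle f,\nu\rangle$ and $\langle f,\lambda\rangle$ are finite by virtue of $\nu,\lambda\in\mathcal E_{g^\alpha_D,f}(D)$. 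Thus the two scalars $\langle W^\lambda_{g^\alpha_D,f},\nu\rangle$ and $\langle W^\lambda_{g^\alpha_D,f},\lambda\rangle$ are finite individually and (\ref{expansion}) is a bona fide identity between real numbers, not merely an inequality between extended-real expressions, which is what makes the limit argument $t\to 0^+$ rigorous.
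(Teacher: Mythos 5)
Your proof is correct and follows essentially the same route as the paper: both establish the quadratic expansion of the weighted energy along the segment from $\lambda$ to $\nu$ (your (\ref{expansion}) is the paper's (\ref{mainin}) with $\mu=\lambda$, $h=t$), then obtain necessity by letting $t\to0^+$ and sufficiency by taking $t=1$. The additional care you take in justifying finiteness of the cross-term via Cauchy--Schwarz and Fubini is a sound elaboration of what the paper dismisses as ``direct calculation.''
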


\begin{proof}By direct calculation, for any $\nu,\mu\in\mathcal E_{g^\alpha_D,f}^\sigma(F,1)$ and any
$h\in(0,1]$ we obtain
\begin{equation}\label{mainin}G_{g^\alpha_D,f}\bigl(h\nu+(1-h)\mu\bigr)-G_{g^\alpha_D,f}\bigl(\mu\bigr)=2h\bigl\langle
W_{g^\alpha_D,f}^\mu,\nu-\mu\bigr\rangle+h^2\bigl\|\nu-\mu\bigr\|^2_{g^\alpha_D}.\end{equation}  If $\mu=\lambda^\sigma_{F}$ solves Problem~\ref{pr2}, then the left (hence, the
right) side
of~(\ref{mainin}) is~${}\geqslant0$, for the class $\mathcal E_{g^\alpha_D,f}^\sigma(F,1)$ is convex, which leads to~(\ref{lchar}) by
letting $h\to0$.

Conversely, if (\ref{lchar}) holds, then
(\ref{mainin}) with $\mu=\lambda$ and $h=1$ gives $G_{g^\alpha_D,f}(\nu)\geqslant
G_{g^\alpha_D,f}(\lambda)$ for all $\nu\in\mathcal E_{g^\alpha_D,f}^\sigma(F,1)$, which means that $\lambda=\lambda^\sigma_{F}$ solves Problem~\ref{pr2}.\end{proof}

\begin{lemma}\label{lemma-cont}$G_{g^\alpha_D,f}(\cdot)$ is vaguely lower semicontinuous on\/ $\mathcal E_{g^\alpha_D}^+(D)$ if Case\/~{\rm I} takes place, and otherwise, it is
strongly continuous.\end{lemma}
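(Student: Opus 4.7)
The plan is to handle the two cases separately, using the two representations of $G_{g^\alpha_D,f}$ afforded by (\ref{wen}) and (\ref{wen0}). Case~II is the easier one and I would dispatch it by exploiting the pre-Hilbert structure already used in the proof of Lemma~\ref{uniqueness}; Case~I requires a direct lsc argument for each of the energy and external-field summands.

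For Case~II, I would appeal to the alternative identity $G_{g^\alpha_D,f}(\nu)=\|\nu+\zeta\|^2_{g^\alpha_D}-\|\zeta\|^2_{g^\alpha_D}$ given in (\ref{wen0}). Strong continuity then reduces to the continuity of the norm in a pre-Hilbert space: if $\nu_k\to\nu$ strongly in $\mathcal E^+_{g^\alpha_D}(D)$, then $\nu_k+\zeta\to\nu+\zeta$ strongly in $\mathcal E_{g^\alpha_D}(D)$, hence $\|\nu_k+\zeta\|_{g^\alpha_D}\to\|\nu+\zeta\|_{g^\alpha_D}$; subtracting the constant $\|\zeta\|^2_{g^\alpha_D}$ yields $G_{g^\alpha_D,f}(\nu_k)\to G_{g^\alpha_D,f}(\nu)$. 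No further ingredients are needed.

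For Case~I, I would use the representation $G_{g^\alpha_D,f}(\nu)=\|\nu\|^2_{g^\alpha_D}+2\langle f,\nu\rangle$ from (\ref{wen}) and verify that each summand is vaguely lsc on $\mathcal E^+_{g^\alpha_D}(D)$. Vague lsc of the quadratic term is the standard fact that $\nu\mapsto\int\!\!\int K\,d(\nu\otimes\nu)$ is vaguely lsc on $\mathfrak M^+$ for every nonnegative lsc kernel $K$; this applies here because, as recorded in the footnote to (\ref{G}), $g^\alpha_D$ is lsc on $D\times D$. One proves it by writing $g^\alpha_D$ as the supremum of an increasing net in $C_c^+(D\times D)$ and applying monotone convergence together with the vague continuity of integration against functions in $C_c^+$. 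For the linear term, when $F$ is non-compact we have $f|_F\geq 0$ lsc (and $f=0$ n.e.~in $D^c$), so the exact same approximation-from-below argument gives vague lsc of $\nu\mapsto\langle f,\nu\rangle$ on $\mathfrak M^+(D)$.

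The main obstacle I anticipate is the sub-case of Case~I in which $F$ is compact and $f|_F$ is allowed to take negative values. I would deal with this by using that an lsc function on a compact set is bounded below: putting $M:=\inf_F f\in\mathbb R$, I would decompose
\[\langle f,\nu\rangle=\langle f-M,\nu\rangle+M\nu(F),\]
where $(f-M)\bigl|_F\geq0$ is lsc and therefore $\nu\mapsto\langle f-M,\nu\rangle$ is vaguely lsc as before. For the residual term, the compactness of $F$ in the locally compact space $D$ lets me choose a Urysohn function $\varphi\in C_c(D)$ with $0\le\varphi\le1$ and $\varphi\equiv1$ on $F$, so that for measures supported in $F$ (the relevant subclass arising from Problem~\ref{pr2}) one has $M\nu(F)=M\int\varphi\,d\nu$, which is vaguely continuous; adding this continuous term to the two vaguely lsc pieces preserves vague lsc. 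Combining this with the energy part completes the verification in Case~I and finishes the proof.
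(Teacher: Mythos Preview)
Your argument is correct and mirrors the paper's: Case~II via~(\ref{wen0}) and norm continuity in the pre-Hilbert space, Case~I via vague lower semicontinuity of the energy and of the linear term---the paper simply cites Fuglede~\cite{F1} (Section~1.1 and Lemma~2.2.1) for these facts rather than writing out the approximation-from-below argument. Your explicit handling of the compact-$F$ sub-case, with the restriction to measures supported on~$F$, is a harmless specialization since the lemma is only ever applied within~$\mathcal E^+_{g^\alpha_D}(F)$.
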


\begin{proof}If Case I holds, then the lemma follows from~\cite{F1} (see Section~1.1 and Lemma~2.2.1 therein), while otherwise it is a direct consequence of relation~(\ref{wen0}).\end{proof}

Lemmas~\ref{autom0} and~\ref{autom} below provide sufficient and/or necessary conditions that guarantee~(\ref{riesz}) and~(\ref{green}) (compare with Lemmas~{\rm4} and~{\rm5} from~\cite{Z5}). From now on we write  \begin{equation}\label{f0}F_0:=\bigl\{x\in F: \ f(x)<\infty\bigr\}.\end{equation}

\begin{lemma}\label{autom0}Let\/ $\sigma=\infty$. Then\/~{\rm(\ref{green})}
{\rm(}hence, also {\rm(\ref{riesz})}{\rm)} holds if and only if\/ $C_\alpha(F_0)>0$.\end{lemma}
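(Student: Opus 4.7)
The plan is to prove the two implications separately: for sufficiency, to exhibit an explicit witness measure in $\mathcal{E}^+_{g^\alpha_D,f}(F,1)$; for necessity, to combine the fact that a positive measure of finite $g^\alpha_D$-energy does not charge sets of $g^\alpha_D$-capacity zero with Lemma~\ref{lemma:equiv}, which identifies zero $\alpha$-Riesz and zero $g^\alpha_D$-capacity for subsets of~$D$.

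\emph{Sufficiency.} Assume $C_\alpha(F_0)>0$. In Case~I, lower semicontinuity of $f|_F$ makes each $F_k:=\{x\in F:f(x)\leqslant k\}$ closed in $F$, with $F_0=\bigcup_{k\in\mathbb{N}}F_k$. By~(\ref{ex}) pick a compact $L\subset F_0$ with $0<C_\alpha(L)<\infty$; perfectness of $\kappa_\alpha$ yields the equilibrium $\lambda^\alpha_L\in\mathcal{E}_\alpha^+(L,1)$. Since $L\subset F_0$, continuity of measure gives $\lambda^\alpha_L(L\cap F_k)\to 1$, so for some $k$ the normalized restriction $\nu:=\lambda^\alpha_L|_{L\cap F_k}/\lambda^\alpha_L(L\cap F_k)$ is a probability measure on $L\cap F_k\subset F$ with $E_\alpha(\nu)\leqslant E_\alpha(\lambda^\alpha_L)/\lambda^\alpha_L(L\cap F_k)^2<\infty$ (using $\kappa_\alpha\geqslant 0$ and $\nu|_{L\cap F_k}\leqslant\lambda^\alpha_L$). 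Then~(\ref{EG}) gives $E_{g^\alpha_D}(\nu)<\infty$, and $\langle f,\nu\rangle\leqslant k$ is finite (bounded below by $f$ being either $\geqslant 0$ on $F$ or lower semicontinuous on the compact $F$), so $\nu\in\mathcal{E}^+_{g^\alpha_D,f}(F,1)$. In Case~II, by~(\ref{identtt}) it suffices to find any element of $\mathcal{E}^+_{g^\alpha_D}(F,1)$, which is furnished by $\lambda^\alpha_L$ itself for any compact $L\subset F$ with $0<C_\alpha(L)<\infty$ (such an $L$ exists by~(\ref{nonzero2}) and~(\ref{ex})).

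\emph{Necessity.} Given $\nu\in\mathcal{E}^+_{g^\alpha_D,f}(F,1)$, finiteness of $G_{g^\alpha_D,f}(\nu)$ combined with the lower bound for $\langle f,\nu\rangle$ (in Case~I, $f$ is bounded below on~$F$; in Case~II, use~(\ref{wen0}) to see that $G_{g^\alpha_D,f}$ is finite-valued on all of $\mathcal{E}_{g^\alpha_D}(D)$) forces $\langle f,\nu\rangle<+\infty$, hence $\nu\{f=+\infty\}=0$ and $\nu(F_0)=1$. Now, if some $N\subset F$ had $C_{g^\alpha_D}(N)=0$ and $\nu(N)>0$, inner regularity of the Radon measure~$\nu$ would produce a compact $K\subset N$ with $\nu(K)>0$; the normalized restriction $\nu|_K/\nu(K)$ would then be a unit-mass measure on $K$ with $E_{g^\alpha_D}(\nu|_K/\nu(K))\leqslant E_{g^\alpha_D}(\nu)/\nu(K)^2<\infty$ (by $\nu|_K\leqslant\nu$ and $g^\alpha_D\geqslant 0$), contradicting $C_{g^\alpha_D}(K)=0$. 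Therefore $\nu(F_0)=1$ forces $C_{g^\alpha_D}(F_0)>0$, and Lemma~\ref{lemma:equiv} converts this to $C_\alpha(F_0)>0$.

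The principal subtlety lies in Case~II: since $f=U_\alpha^{\zeta-\beta^\alpha_{D^c}\zeta}$ can take the value $-\infty$ on a set of positive capacity, the symbol $\langle f,\nu\rangle$ needs the interpretation supplied by~(\ref{wen0}), which rewrites $G_{g^\alpha_D,f}(\nu)=\|\nu+\zeta\|^2_{g^\alpha_D}-\|\zeta\|^2_{g^\alpha_D}$; Cauchy--Schwarz in the pre-Hilbert space $\mathcal{E}_{g^\alpha_D}(D)$ then shows this quantity is automatically finite whenever $E_{g^\alpha_D}(\nu)<\infty$, so that~(\ref{green}) in Case~II reduces to the nonemptiness of $\mathcal{E}^+_{g^\alpha_D}(F,1)$ and both halves of the lemma become essentially automatic under~(\ref{nonzero2}).
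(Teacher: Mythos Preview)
Your argument is correct and follows essentially the same route as the paper: for sufficiency, locate a compact subset of~$F_0$ of positive capacity on which $f$ is bounded and take (a restriction of) an equilibrium measure there; for necessity, use that a measure of finite energy does not charge sets of capacity zero together with Lemma~\ref{lemma:equiv}. Two small remarks: in your sufficiency step the parenthetical justification ``$\nu|_{L\cap F_k}\leqslant\lambda^\alpha_L$'' is not literally true when $\lambda^\alpha_L(L\cap F_k)<1$; what you want (and what actually yields your stated bound $E_\alpha(\nu)\leqslant E_\alpha(\lambda^\alpha_L)/\lambda^\alpha_L(L\cap F_k)^2$) is $\lambda^\alpha_L|_{L\cap F_k}\leqslant\lambda^\alpha_L$; and your explicit separation of Case~II via~(\ref{identtt}) and~(\ref{wen0}) is a nice addition---the paper handles that case through Remark~\ref{rem-aut} rather than inside the proof.
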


\begin{proof} Suppose first that $C_\alpha(F_0)>0$. On account of \cite[Lemma~2.3.3]{F1}, then one can choose a compact set $K\subset F$ with $C_\alpha(K)>0$ so that $f(x)\leqslant M<\infty$ for all $x\in K$. In turn, this yields that there exists $\nu\in\mathcal E^+_{g^\alpha_D}(K,1)$ with $G_{g^\alpha_D,f}(\nu)<\infty$, and (\ref{green}) follows.

To prove the necessary part of the lemma, assume, on the contrary, that $C_\alpha(F_0)=0$. Then for every $\mu\in\mathcal E^+_{g^\alpha_D}(F,1)$, we necessarily have $G_{g^\alpha_D,f}(\nu)=\infty$, which contradicts~(\ref{green}).\end{proof}

\begin{definition}\label{admissible} $\xi\in\mathfrak C(F)$ is called {\it admissible\/} if its restriction to any compact subset of~$F$ has finite $\alpha$-Riesz (hence,
$\alpha$-Green) energy. Let $\mathcal A(F)$ consist of all admissible constraints.\end{definition}

When considering admissibility of a measure, the parameter $\alpha$ and the set $F$  should be clear in each context. Observe that, for a constraint $\xi\in\mathfrak C(F)$ to be admissible, it is sufficient that its $\alpha$-Riesz potential be continuous.
Also note that any $\xi\in\mathcal A(F)$ is $C$-absolutely continuous.

\begin{lemma}\label{autom} If\/ $\xi\in\mathcal A(F)$, then\/~{\rm(\ref{green})}
{\rm(}hence, also\/~{\rm(\ref{riesz})}{\rm)} holds provided that\/ $\xi(F_0)>1$.
\end{lemma}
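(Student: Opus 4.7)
The plan is to exhibit a concrete measure $\nu \in \mathcal E^\sigma_{g^\alpha_D,f}(F,1)$ with $G_{g^\alpha_D,f}(\nu) < \infty$, built by normalizing the restriction of $\xi$ to a suitably chosen compact subset of~$F$.

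The first step is to locate a compact $K \subset F$ with $\xi(K) > 1$ on which the external field $f$ is under control. In Case~I, where $f|_F$ is lower semicontinuous, the level sets $F_m := \{x \in F : f(x) \leq m\}$ are closed in $F$ (hence Borel) and increase to $F_0$; by continuity of $\xi$ along increasing Borel sequences, $\xi(F_m) \to \xi(F_0) > 1$, so $\xi(F_m) > 1$ for all sufficiently large $m$. Inner regularity of the Radon measure $\xi$ then supplies a compact $K \subset F_m$ with $\xi(K) > 1$ and $f \leq m$ on $K$. In Case~II no boundedness of $f$ is needed: since $F_0 \subset F$ yields $\xi(F) \geq \xi(F_0) > 1$, inner regularity alone produces a compact $K \subset F$ with $\xi(K) > 1$.

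Set $\nu := \xi(K)^{-1}\,\xi|_K$. The inequality $\xi(K) > 1$ guarantees $\nu \leq \xi$, and evidently $\nu \in \mathfrak M^+(F,1)$. Admissibility of $\xi$ (Definition~\ref{admissible}) gives $E_{g^\alpha_D}(\xi|_K) < \infty$, whence $E_{g^\alpha_D}(\nu) < \infty$, so $\nu \in \mathcal E^+_{g^\alpha_D}(F,1)$. It remains to verify that $G_{g^\alpha_D,f}(\nu) < \infty$. In Case~I, $f \leq m$ on $S^\nu_D \subset K$ and $f$ is bounded below on $F$ (either $f \geq 0$, or $F$ is compact and $f$ is l.s.c.~on $F$, hence bounded below), so $\langle f,\nu\rangle$ is finite. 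In Case~II, (\ref{wen0}) and the Cauchy--Schwarz inequality in the pre-Hilbert space $\mathcal E_{g^\alpha_D}(D)$ yield
\[
|\langle f, \nu\rangle| = |E_{g^\alpha_D}(\zeta,\nu)| \leq \|\zeta\|_{g^\alpha_D}\,\|\nu\|_{g^\alpha_D} < \infty.
\]
Combined with the finiteness of $\|\nu\|_{g^\alpha_D}^2$ already established, this proves $G_{g^\alpha_D,f}(\nu) < \infty$, so $\nu \in \mathcal E^\sigma_{g^\alpha_D,f}(F,1)$ and (\ref{green}) follows; the bracketed assertion about (\ref{riesz}) is then automatic from the equivalence of (\ref{riesz}) and (\ref{green}) noted in the text.

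The only subtle point I anticipate is arranging, in Case~I, that the compact $K$ simultaneously sits inside $F_0$, carries $\xi$-mass exceeding~$1$, and enjoys a uniform upper bound on $f$. This is precisely where lower semicontinuity of $f|_F$ is used, via the truncation $F_m$: it converts the $\xi$-measure-theoretic hypothesis $\xi(F_0) > 1$ into the topological-regularity statement needed to extract a suitable compact~$K$. Case~II bypasses this difficulty entirely because admissibility of $\xi$ combined with $\zeta \in \mathcal E_{g^\alpha_D}(D)$ forces finiteness of $\langle f,\nu\rangle$ via Cauchy--Schwarz, with no bound on $f$ itself required.
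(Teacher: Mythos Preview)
Your proof is correct and follows essentially the same route as the paper's: construct the test measure $\nu=\xi|_K/\xi(K)$ for a compact $K\subset F$ with $\xi(K)>1$ on which $f$ is bounded above, and verify membership in $\mathcal E^\xi_{g^\alpha_D,f}(F,1)$. The paper compresses this into a single sentence, while you spell out the extraction of $K$ via the sublevel sets $F_m$ and inner regularity, and you handle Case~II separately through Cauchy--Schwarz rather than via a pointwise bound on~$f$; both treatments are fine and lead to the same conclusion.
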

\begin{proof}Choose a compact set $K\subset F$ so that $\xi(K)>1$ and $f(x)\leqslant M<\infty$ for all $x\in K$. Then ${\xi|_K}\bigl/{\xi(K)}\in\mathcal E^\xi_{g^\alpha_D,f}(F,1)$,
which yields~(\ref{green}).\end{proof}

\begin{remark}\label{rem-aut}If Case~II takes place, then $f(x)$ is finite n.e.~in~$F$ and, hence,
Lemma~\ref{autom0} (similarly, Lemma~\ref{autom}) remains true with $C_\alpha(F_0)>0$ (respectively, $\xi(F_0)>1$) dropped.\end{remark}

\section{Criteria of the solvability, given either in measure theory terms for~$\sigma$, or in geometric-potential terms for~$F$ and~$D^c$}\label{geom}
Throughout this section and Sections~\ref{sec-main-var-green} and~\ref{sec-main-var-riesz}, assume~(\ref{riesz}) or, equivalently,~(\ref{green}) to be satisfied. See Lemmas~\ref{autom0},~\ref{autom} and Remark~\ref{rem-aut} above, providing necessary and/or sufficient conditions for these to hold.

\begin{theorem}\label{silv-bound}If Case\/~{\rm I} takes place, then Problem\/~{\rm\ref{pr2}} is\/ {\rm(}uniquely\/{\rm)} solvable for every constraint\/ $\xi\in\mathfrak
C_0(F)$.\end{theorem}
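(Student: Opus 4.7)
The plan is to carry out the direct method of the calculus of variations on $\mathcal E^{\xi}_{g^\alpha_D,f}(F,1)$, leveraging that the constraint $\xi$ is a \emph{finite} Radon measure (i.e.\ $\xi(F)<\infty$), which provides exactly the compactness one needs, together with the vague lower semicontinuity supplied by Lemma~\ref{lemma-cont} in Case~I. Uniqueness being already covered by Lemma~\ref{uniqueness}, only existence has to be addressed.

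First I would fix a minimizing sequence $\{\nu_k\}\subset\mathcal E^{\xi}_{g^\alpha_D,f}(F,1)$, so that $G_{g^\alpha_D,f}(\nu_k)\to G^{\xi}_{g^\alpha_D,f}(F,1)<\infty$. Because $\nu_k\leqslant\xi$ and $\xi(F)<\infty$, the sequence is bounded in total mass, hence vaguely relatively compact in $\mathfrak M^+(D)$; I pass to a vaguely convergent subsequence (still denoted $\{\nu_k\}$) with vague limit $\nu_0\in\mathfrak M^+(D)$. The inequality $\nu_k\leqslant\xi$ is preserved under vague convergence (test against nonnegative continuous functions of compact support), so $\nu_0\leqslant\xi$; in particular, $S^{\nu_0}_D\subseteqq S^\xi_D=F$, so $\nu_0\in\mathfrak M^+(F)$.

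The key point is to verify that $\nu_0(F)=1$. Here I use the boundedness of $\xi$ together with its Radon (hence inner regular) nature: given $\varepsilon>0$, choose a compact $K\subset D$ with $\xi(D\setminus K)<\varepsilon$; then $\nu_k(D\setminus K)\leqslant\xi(D\setminus K)<\varepsilon$ uniformly in~$k$, which is tightness. Combined with vague convergence, this upgrades to narrow convergence $\nu_k\to\nu_0$, so $\nu_0(F)=\nu_0(D)=\lim_k\nu_k(D)=1$. Thus $\nu_0\in\mathfrak M^+(F,1)$ with $\nu_0\leqslant\xi$.

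Finally, Lemma~\ref{lemma-cont} asserts that $G_{g^\alpha_D,f}(\cdot)$ is vaguely lower semicontinuous on $\mathcal E^+_{g^\alpha_D}(D)$ in Case~I (lower semicontinuity of the energy comes from the lower semicontinuity of $g^\alpha_D$, and lower semicontinuity of $\langle f,\cdot\rangle$ from $f\bigl|_F$ being lower semicontinuous and nonnegative when $F$ is non-compact, while if $F$ is compact, $f$ is bounded below on $F$ and the total mass is uniformly bounded, so $\langle f,\nu_k\rangle$ remains controlled). Therefore
\[G_{g^\alpha_D,f}(\nu_0)\leqslant\liminf_{k\to\infty}G_{g^\alpha_D,f}(\nu_k)=G^{\xi}_{g^\alpha_D,f}(F,1)<\infty,\]
which in particular shows $\nu_0\in\mathcal E^{\xi}_{g^\alpha_D,f}(F,1)$, and equality must hold, so $\nu_0$ is the desired minimizer $\lambda^{\xi}_F$. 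The principal subtlety — and the place I would spend the most care — is the passage from vague to narrow convergence that secures $\nu_0(F)=1$; this is where the boundedness of $\xi$ is used in an essential way, and is precisely why the result is stated for $\xi\in\mathfrak C_0(F)$ rather than for arbitrary $\xi\in\mathfrak C(F)$.
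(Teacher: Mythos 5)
Your argument is correct and follows essentially the same route as the paper's own proof: take a minimizing sequence, extract a vague cluster point, use the boundedness of $\xi$ to control the escape of mass (you phrase this as tightness giving narrow convergence; the paper writes out the equivalent exhaustion-by-compacts chain of inequalities driven by $\xi(F\setminus K_k)\to0$), and then invoke the vague lower semicontinuity of $G_{g^\alpha_D,f}$ from Lemma~\ref{lemma-cont} to conclude. The only differences are presentational, so there is nothing of substance to flag.
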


In the next theorem, the following condition on the geometry of the condenser~$\mathbf A$ is required to hold:
\begin{itemize}
\item[$(\ast)$] \ {\it If\/ $\alpha<2$, then\/ $\overline{F}\cap\partial_{\overline{\mathbb R^n}}D$ consists of at most one point\/.}
\end{itemize}
Note that we do not impose any restriction on $\overline{F}\cap\partial_{\overline{\mathbb R^n}}D$ provided that $\alpha=2$.

\begin{theorem}\label{lemma-main} If, moreover,
\begin{equation}\label{finite}C_{g_D^\alpha}(F)<\infty\end{equation}
then, in both Cases\/~{\rm I} and\/~{\rm II}, Problem\/~{\rm\ref{pr2}} is\/ {\rm(}uniquely\/{\rm)}
solvable for every\/ $\sigma\in\mathfrak C(F)\cup\{\infty\}$.\footnote{Compare
with~\cite[Theorem~2.2]{ZCAOT} and~\cite[Theorem~8.1]{ZPot2}.}
\end{theorem}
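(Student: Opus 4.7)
The plan is to follow the classical variational strategy: extract a minimizing sequence, show it is strong Cauchy via the pre-Hilbert structure, identify its strong limit by invoking the perfectness-type Theorem~\ref{alphaeq1} for the $\alpha$-Green kernel, and verify that this limit actually lies in $\mathcal E^\sigma_{g^\alpha_D,f}(F,1)$ and attains the infimum. Uniqueness is already supplied by Lemma~\ref{uniqueness}, so only existence has to be addressed.

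First, pick $\{\nu_k\}\subset\mathcal E^\sigma_{g^\alpha_D,f}(F,1)$ with $G_{g^\alpha_D,f}(\nu_k)\to G^\sigma_{g^\alpha_D,f}(F,1)$. Since the class is convex, $\tfrac12(\nu_k+\nu_l)$ is admissible; applying the parallelogram identity in the pre-Hilbert space $\mathcal E_{g^\alpha_D}(D)$ and using linearity of $\langle f,\cdot\rangle$ yields
\[
\|\nu_k-\nu_l\|^2_{g^\alpha_D}=2G_{g^\alpha_D,f}(\nu_k)+2G_{g^\alpha_D,f}(\nu_l)-4G_{g^\alpha_D,f}\bigl(\tfrac{\nu_k+\nu_l}{2}\bigr)\longrightarrow 0,
\]
so $\{\nu_k\}$ is strong Cauchy. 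A routine check shows that $\|\nu_k\|_{g^\alpha_D}$ is bounded: in Case~I, either $F$ is compact and $f|_F$ is lower semicontinuous (hence bounded below) or $f|_F\geqslant0$, so $2\langle f,\nu_k\rangle\geqslant -C$; in Case~II, boundedness follows directly from~(\ref{wen0}).

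Now apply Theorem~\ref{alphaeq1}, the perfectness-type result the paper develops for $g^\alpha_D$ with $\alpha<2$ (the case $\alpha=2$ being covered by the classical perfectness of $g^2_D$); it is precisely in this step that condition~$(\ast)$ is needed when $\alpha<2$. This produces $\nu^{*}\in\mathcal E^+_{g^\alpha_D}(D)$ with $\nu_k\to\nu^{*}$ both strongly and vaguely. Because $F$ is relatively closed in $D$ and each $\nu_k$ is concentrated on $F$, vague convergence gives $S^{\nu^{*}}_D\subseteq F$; for $\sigma\in\mathfrak C(F)$ the constraint $\nu_k\leqslant\sigma$ also passes to the vague limit. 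What remains to be shown, and is the main obstacle, is mass preservation $\nu^{*}(F)=1$: a priori, mass could be lost at $\partial_{\overline{\mathbb R^n}}D$. This is where the assumption $C_{g_D^\alpha}(F)<\infty$ enters. By Theorem~\ref{alphaeq}, the $\alpha$-Green equilibrium measure $\lambda_F\in\mathcal E^+_{g^\alpha_D}(F,1)$ exists, and its potential satisfies $U^{\lambda_F}_{g^\alpha_D}=w_{g^\alpha_D}(F)$ n.e.\ on $F$. Every measure of finite $g^\alpha_D$-energy is $C$-absolutely continuous (by Lemma~\ref{lemma:equiv} this is equivalent to the $\alpha$-Riesz notion), so
\[
E_{g^\alpha_D}(\nu_k,\lambda_F)=w_{g^\alpha_D}(F)\cdot\nu_k(F)=w_{g^\alpha_D}(F),\qquad E_{g^\alpha_D}(\nu^{*},\lambda_F)=w_{g^\alpha_D}(F)\cdot\nu^{*}(F).
\]
Strong convergence forces the left-hand sides to agree in the limit; since $w_{g^\alpha_D}(F)>0$, this yields $\nu^{*}(F)=1$ and hence $\nu^{*}\in\mathcal E^\sigma_{g^\alpha_D,f}(F,1)$.

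Finally, minimality follows from Lemma~\ref{lemma-cont}. In Case~II, $G_{g^\alpha_D,f}$ is strongly continuous, so $G_{g^\alpha_D,f}(\nu^{*})=\lim_k G_{g^\alpha_D,f}(\nu_k)=G^\sigma_{g^\alpha_D,f}(F,1)$. In Case~I, $G_{g^\alpha_D,f}$ is vaguely lower semicontinuous on $\mathcal E^+_{g^\alpha_D}(D)$, which combined with the vague convergence $\nu_k\to\nu^{*}$ gives $G_{g^\alpha_D,f}(\nu^{*})\leqslant\liminf_k G_{g^\alpha_D,f}(\nu_k)=G^\sigma_{g^\alpha_D,f}(F,1)$, and the reverse inequality is automatic. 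Thus $\lambda^\sigma_F:=\nu^{*}$ solves Problem~\ref{pr2}. The genuinely difficult point in this plan is the invocation of Theorem~\ref{alphaeq1}: everything else is a fairly standard Hilbert-space minimization, but the perfectness-type behaviour of $g^\alpha_D$ for $\alpha<2$ under condition~$(\ast)$ is exactly the novel ingredient that makes the whole scheme function.
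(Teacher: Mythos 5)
Your overall plan coincides with the paper's in structure — minimizing sequence, parallelogram identity to get a strong Cauchy sequence, Theorem~\ref{alphaeq1} to identify a strong and vague limit, Lemma~\ref{lemma-cont} to pass to the limit in the energy — and the genuinely novel ingredient (Theorem~\ref{alphaeq1}) is correctly identified. Where you diverge from the paper is in the mass-preservation step, which is in fact the whole reason the hypothesis $C_{g_D^\alpha}(F)<\infty$ is needed. The paper handles this (inside Lemma~\ref{lemma-aux}) by introducing tail equilibrium measures $\gamma_m=\gamma_{K_m^*}$ on $K_m^*={\mathrm C\ell}_D(F\setminus K_m)$, showing via Theorem~\ref{alphaeq1} that $\|\gamma_m\|_{g^\alpha_D}\to 0$, and then estimating $\mu_k(F\setminus K_m)\leqslant\langle U_{g^\alpha_D}^{\gamma_m},\mu_k\rangle\leqslant\|\gamma_m\|_{g^\alpha_D}\|\mu_k\|_{g^\alpha_D}$, using only the inequality $U^{\gamma_m}_{g^\alpha_D}\geqslant1$ n.e.\ together with Cauchy--Schwarz. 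Your approach, using a single equilibrium measure $\gamma_F$ and the continuity of $E_{g^\alpha_D}(\,\cdot\,,\gamma_F)$ under strong convergence, is potentially shorter and cleaner, but as written it has a gap.

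The gap is in the sentence asserting that, by Theorem~\ref{alphaeq}, the potential of the normalized equilibrium measure is \emph{equal} to $w_{g^\alpha_D}(F)$ n.e.\ on $F$. Theorem~\ref{alphaeq} does not say this: it gives $U^{\gamma}_{g^\alpha_D}\geqslant1$ n.e.\ in $F$ together with $U^{\gamma}_{g^\alpha_D}\leqslant1$ only on $S^\gamma_D$, and it gives $\gamma(F)=C_{g^\alpha_D}(F)$ rather than $\gamma(F)=1$. With only the one-sided bound $U^{\gamma_F}_{g^\alpha_D}\geqslant1$ n.e., your pairing argument yields
$E_{g^\alpha_D}(\nu_k,\gamma_F)\geqslant\nu_k(F)=1$ and $E_{g^\alpha_D}(\nu^*,\gamma_F)\geqslant\nu^*(F)$,
and strong convergence of $E_{g^\alpha_D}(\nu_k,\gamma_F)\to E_{g^\alpha_D}(\nu^*,\gamma_F)$ does not force $\nu^*(F)=1$ from these one-sided inequalities. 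To close the gap you would need to upgrade $U^{\gamma_F}_{g^\alpha_D}\leqslant1$ from ``on $S^{\gamma_F}_D$'' to ``everywhere in $D$'' (hence $=1$ n.e.\ on $F$), which is a separate maximum-principle step for the $\alpha$-Green potential: from $U^{\gamma_F}_{g^\alpha_D}\leqslant1$ on $S^{\gamma_F}_D$ one passes to $U^{\gamma_F}_\alpha\leqslant1+U^{\beta^\alpha_{D^c}\gamma_F}_\alpha$ on $S^{\gamma_F}_D$ and then invokes \cite[Theorems~1.27, 1.29]{L} (exactly as the paper does in the proofs of Corollaries~\ref{cor-0} and~\ref{cor-0'-infty}). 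Once that step is added your route works and is a legitimate, arguably tidier alternative to the paper's tail-equilibrium-measure argument; without it the mass-preservation claim is unsupported, precisely because this is the one place where the hypothesis $C_{g_D^\alpha}(F)<\infty$ must actively earn its keep.
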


\begin{theorem}\label{infcap} Suppose that Case\/~{\rm II} with\/ $\zeta\geqslant0$ takes place. If, moreover,
$C_{g^\alpha_D}(F)=\infty$, then
Problem\/~{\rm\ref{pr2}} is unsolvable for every\/ $\sigma\in\mathfrak C(F)\cup\{\infty\}$ such that\/ $\sigma\geqslant\xi_0$, where\/ $\xi_0\in\mathfrak C(F)\setminus\mathfrak
C_0(F)$ is properly chosen.
\end{theorem}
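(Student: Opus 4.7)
The plan is to construct an explicit minimizing sequence $\{\lambda_m\}$ driving $G_{g^\alpha_D,f}$ down to its infimum value $0$, together with a constraint $\xi_0\in\mathfrak C(F)\setminus\mathfrak C_0(F)$ that still admits this sequence, and then to argue that the value $0$ can never be attained. The nonattainment half is immediate from $\zeta\geqslant 0$: in Case~II, formula~(\ref{wen}) reads
\[G_{g^\alpha_D,f}(\nu)=\|\nu\|_{g^\alpha_D}^{2}+2E_{g^\alpha_D}(\zeta,\nu),\]
and both summands are nonnegative on $\mathcal E^{+}_{g^\alpha_D}(F,1)$ since $g^\alpha_D\geqslant 0$. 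Hence $G_{g^\alpha_D,f}\geqslant 0$, and any hypothetical minimizer $\lambda$ attaining zero would force $\|\lambda\|_{g^\alpha_D}=0$, so $\lambda=0$ by the strict positive definiteness of $g^\alpha_D$ --- contradicting $\lambda(F)=1$.

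For the minimizing sequence, I use $C_{g^\alpha_D}(F)=\infty$ to select, via~(\ref{ex}), compact sets $K_m\subset F$ with $C_{g^\alpha_D}(K_m)\geqslant m$, and take the $g^\alpha_D$-equilibrium measures $\lambda_m:=\lambda^{g^\alpha_D}_{K_m}$, whose existence rests on Theorem~\ref{alphaeq} for $\alpha<2$ and is classical for $\alpha=2$. Then $\|\lambda_m\|_{g^\alpha_D}^{2}=1/C_{g^\alpha_D}(K_m)\to 0$, and Cauchy--Schwarz in $\mathcal E_{g^\alpha_D}(D)$ yields $|E_{g^\alpha_D}(\zeta,\lambda_m)|\leqslant\|\zeta\|_{g^\alpha_D}\|\lambda_m\|_{g^\alpha_D}\to 0$, so $G_{g^\alpha_D,f}(\lambda_m)\to 0$.

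The main obstacle is the construction of $\xi_0$: a Radon measure on $D$ with $S^{\xi_0}_D=F$, infinite total mass, and $\xi_0\geqslant\lambda_m$ for (a subsequence of) the $m$. The key is to show that the $\lambda_m$ escape to infinity in the locally compact space $D$, i.e., some subsequence converges vaguely to~$0$. Indeed, $\{\lambda_m\}$ is vaguely relatively compact by Banach--Alaoglu (unit total mass), and any vague limit $\lambda_\infty$ satisfies $\|\lambda_\infty\|_{g^\alpha_D}^{2}\leqslant\liminf\|\lambda_m\|_{g^\alpha_D}^{2}=0$ by Fuglede's vague lower semicontinuity of energy, forcing $\lambda_\infty=0$ via the strict positive definiteness of $g^\alpha_D$. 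Given a compact exhaustion $L_k\subset\mathrm{int}\,L_{k+1}$ of~$D$ with $\bigcup_k L_k=D$, a diagonal extraction then yields a subsequence (still denoted $\{\lambda_m\}$) satisfying $\lambda_m(L_m)\leqslant 2^{-m}$. Fixing a bounded $\mu_0\in\mathfrak M^{+}(D)$ supported on a countable dense subset of $F$ (so that $S^{\mu_0}_D=F$), I set
\[\xi_0:=\mu_0+\sum_m\lambda_m.\]
Every compact $L\subset D$ lies in some $L_K$, so $\sum_m\lambda_m(L)\leqslant K+\sum_{m\geqslant K}2^{-m}<\infty$, making $\xi_0$ Radon; by construction $\xi_0(F)=\infty$, $S^{\xi_0}_D=F$, and $\lambda_m\leqslant\xi_0$ for each $m$. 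For every $\sigma\geqslant\xi_0$, the identification~(\ref{identtt}) places each $\lambda_m$ in $\mathcal E^{\sigma}_{g^\alpha_D,f}(F,1)$, giving $G^{\sigma}_{g^\alpha_D,f}(F,1)=0$; combined with the nonattainment observation from the first paragraph, this completes the proof.
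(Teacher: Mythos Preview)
Your proof is correct and follows the same overall strategy as the paper: show $G_{g^\alpha_D,f}\geqslant 0$ with equality only at the zero measure, then exhibit a sequence in $\mathcal E^{\sigma}_{g^\alpha_D,f}(F,1)$ driving the functional to~$0$ for every $\sigma\geqslant\xi_0$.

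The difference lies in how the minimizing sequence and the constraint $\xi_0$ are manufactured. The paper exploits the elementary observation that $C_{g^\alpha_D}(F\setminus K_k)=\infty$ for every compact $K_k\subset F$, so one may choose each $\nu_k$ to be compactly supported inside $F\setminus K_k$ from the outset; arranging $K_k\cup S^{\nu_k}_D\subset K_{k+1}$ inductively makes the supports of the $\nu_k$ pairwise disjoint, and the local finiteness of $\sum_k\nu_k$ is then automatic. Your route instead takes equilibrium-type measures $\lambda_m$ on growing compact sets $K_m$ and afterwards forces them to ``escape to infinity'' via the vague-lower-semicontinuity argument $\|\lambda_\infty\|_{g^\alpha_D}=0\Rightarrow\lambda_\infty=0$, followed by a diagonal extraction against a compact exhaustion of~$D$. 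This is a legitimate and self-contained substitute, but it costs you the extra compactness/metrizability discussion and the diagonal step, whereas the paper's choice of supports sidesteps all of that in one line. Two minor remarks: your appeal to Theorem~\ref{alphaeq} is unnecessary, since for \emph{compact} $K_m\subset D$ the minimizer in~(\ref{cap}) exists by elementary vague compactness and lower semicontinuity (and in fact you only need near-minimizers, not exact ones); and your auxiliary $\mu_0$ plays exactly the role of the paper's arbitrary $\xi\in\mathfrak C(F)$, namely to guarantee $S^{\xi_0}_D=F$.
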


Combining Theorems~\ref{lemma-main} and~\ref{infcap} shows that, if assumption~$(\ast)$ and Case~II with $\zeta\geqslant0$ both hold, then (\ref{finite}) is necessary and
sufficient for Problem~\ref{pr2} to be solvable for every $\sigma\in\mathfrak C(F)\cup\{\infty\}$.

\begin{theorem}\label{th-main'} Assume\/
$D^c$ to be not\/ $\alpha$-thin at\/~$\omega$. Under the hypotheses of Theorem\/~{\rm\ref{silv-bound}} {\rm(}similarly, Theorems\/~{\rm\ref{lemma-main}}
or\/~{\rm\ref{infcap}}{\rm)}, its conclusion remains true for Problem\/~{\rm\ref{pr}} as well.
\end{theorem}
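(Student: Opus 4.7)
The plan is to deduce Theorem~\ref{th-main'} directly from Lemma~\ref{lemma-aux''}, which under the standing hypothesis that $D^c$ is not $\alpha$-thin at~$\omega$ serves as a dictionary between Problems~\ref{pr} and~\ref{pr2}. Since conditions (\ref{riesz}) and~(\ref{green}) are already known to be equivalent, the finiteness assumption standing throughout this section applies simultaneously to both problems. Moreover, Lemma~\ref{lemma-aux''} asserts that a solution to Problem~\ref{pr} exists if and only if one to Problem~\ref{pr2} does, and exhibits the explicit correspondence $\lambda^\sigma_{\mathbf A} = \lambda^\sigma_F - \beta^\alpha_{D^c}\lambda^\sigma_F$. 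So the task reduces to invoking the source theorems on the Green side and pushing their conclusions through this dictionary.

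First I would dispatch the solvability cases (those coming from Theorems~\ref{silv-bound} and~\ref{lemma-main}). Under the respective hypotheses, each guarantees a (unique) minimizer $\lambda^\sigma_F \in \mathcal E^\sigma_{g^\alpha_D,f}(F,1)$ for Problem~\ref{pr2}. Lemma~\ref{lemma-aux''} then produces $\lambda^\sigma_{\mathbf A} := \lambda^\sigma_F - \beta^\alpha_{D^c}\lambda^\sigma_F \in \mathcal E^\sigma_{\alpha,f}(\mathbf A,\mathbf 1)$ attaining $G^\sigma_{\alpha,f}(\mathbf A,\mathbf 1)$, and uniqueness is delivered by the first assertion of Lemma~\ref{uniqueness}. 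Crucially, the hypothesis that $D^c$ is not $\alpha$-thin at~$\omega$ enters precisely here, since by Theorem~\ref{th-bala} it ensures $\beta^\alpha_{D^c}\lambda^\sigma_F(\mathbb R^n) = \lambda^\sigma_F(\mathbb R^n) = 1$, so that $\lambda^\sigma_{\mathbf A}$ lies in the correct class with positive and negative parts of unit mass.

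For the unsolvability case corresponding to Theorem~\ref{infcap}, I would apply the contrapositive of the equivalence in Lemma~\ref{lemma-aux''}. Under Case~II with $\zeta \geqslant 0$ and $C_{g^\alpha_D}(F) = \infty$, Theorem~\ref{infcap} furnishes $\xi_0 \in \mathfrak C(F)\setminus \mathfrak C_0(F)$ such that Problem~\ref{pr2} has no solution for any $\sigma \geqslant \xi_0$ (including $\sigma = \infty$); consequently, no $\lambda^\sigma_{\mathbf A}$ solving Problem~\ref{pr} can exist either, for the same range of constraints.

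I do not anticipate a real obstacle: the non-trivial content (the energy identity $G^\sigma_{g^\alpha_D,f}(F,1) = G^\sigma_{\alpha,f}(\mathbf A,\mathbf 1)$, the mass-preservation under balayage onto a set not $\alpha$-thin at~$\omega$, and the biconditional on solvability) has already been packaged into Lemma~\ref{lemma-aux''} and Theorem~\ref{th-bala}. The only verification that warrants a brief line is to confirm, in the Theorem~\ref{silv-bound} branch, that the constraint $\xi \in \mathfrak C_0(F)$ imposed on the Green side transfers unchanged to the Riesz side under the correspondence $\mu \mapsto \mu^+$, which is immediate from the definitions of $\mathcal E^\xi_{\alpha,f}(\mathbf A,\mathbf 1)$ and $\mathcal E^\xi_{g^\alpha_D,f}(F,1)$.
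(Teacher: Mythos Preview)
Your proposal is correct and follows essentially the same approach as the paper, which proves the theorem in a single sentence by invoking Lemma~\ref{lemma-aux''} to transfer Theorems~\ref{silv-bound}--\ref{infcap} from Problem~\ref{pr2} to Problem~\ref{pr}. You have simply unpacked that sentence with more care, making explicit the role of mass preservation (Theorem~\ref{th-bala}) and the uniqueness appeal (Lemma~\ref{uniqueness}), both of which are already absorbed into Lemma~\ref{lemma-aux''}.
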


Indeed, Theorem \ref{th-main'} is obtained from Theorems~\ref{silv-bound}--\ref{infcap} with the help of Lemma~\ref{lemma-aux''}.

In the next two sections we shall examine properties of the $f$-weighted potentials and the supports of the minimizers~$\lambda^\sigma_F$ and~$\lambda^\sigma_{\mathbf A}$, whose
existence has been ensured, e.g., by Theorems~\ref{silv-bound}, \ref{lemma-main}, and~\ref{th-main'}.

\section{Variational inequalities for the $f$-weighted $\alpha$-Green potentials}\label{sec-main-var-green}

This section provides necessary and/or
sufficient conditions for the solvability of Problem~\ref{pr2} in terms of variational inequalities for the $f$-weight\-ed $\alpha$-Green potentials. It also presents a detailed
analysis of properties of the supports of the minimizers.

Following \cite[p.~164]{L}, we denote by $\breve{F}$ the {\it reduced kernel\/} of~$F$, i.e.
\begin{equation}\label{red}\breve{F}:=\bigl\{x\in F: \ C_\alpha\bigl(B(x,\varepsilon)\cap
F\bigr)>0\text{ \ for every  $\varepsilon>0$}\bigr\}.\end{equation} Here $B(x,\varepsilon):=\{y\in\mathbb R^n: \ |y-x|<\varepsilon\}$. Observe that, if the constraint under consideration is admissible, then necessarily
$F=\breve{F}$.

To simplify the formulations of the results obtained, throughout this section and Section~\ref{sec-main-var-riesz} we assume $\partial D$ to be simultaneously the boundary of
the (open) set~${\rm Int}\,D^c$. Here, the boundary and the interior are considered relative to~$\mathbb R^n$. Notice that then $m_n(D^c)>0$, where $m_n$ is the $n$-dimensional
Lebesgue measure.

\subsection{Variational inequalities in the constrained $\alpha$-Green minimum energy problems}\label{sec-var1} We start by studying Problem~\ref{pr2} in the constrained case
(i.e., for $\sigma\ne\infty$).
In this section, we consider $\xi\in\mathcal A(F)$ and assume that $\xi(F_0)>1$. Note that, for any $\nu\in\mathcal
E^+_{g^\alpha_D}(F)$,  $W^\nu_{g^\alpha_D,f}(x)$ is well defined and~${}\ne-\infty$ n.e.~in~$F$, while it is finite n.e.~in~$F_0$ (see~(\ref{f0})).

\begin{theorem}\label{th-main''} Let Case\/~{\rm I} take place. Then a measure\/ $\lambda\in\mathcal E_{g^\alpha_D,f}^\xi(F,1)$ solves
Problem\/~{\rm\ref{pr2}} if and only if
there exists\/ $w_\lambda\in\mathbb R$ possessing the following two properties:
\begin{align}W^\lambda_{g^\alpha_D,f}(x)&\geqslant w_\lambda\quad(\xi-\lambda)\mbox{-a.e.~in\ }F,\label{b1}\\
W^\lambda_{g^\alpha_D,f}(x)&\leqslant w_\lambda\quad\mbox{for all \ }x\in S^{\lambda}_D.\label{b2}\end{align}
\end{theorem}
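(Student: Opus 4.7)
The plan is to apply Lemma~\ref{lequiv}, which asserts that $\lambda \in \mathcal E^\xi_{g^\alpha_D,f}(F,1)$ solves Problem~\ref{pr2} if and only if $\langle W^\lambda_{g^\alpha_D,f}, \nu - \lambda\rangle \geqslant 0$ for every competitor $\nu \in \mathcal E^\xi_{g^\alpha_D,f}(F,1)$; I will then establish that this variational inequality is equivalent to the existence of a constant $w_\lambda \in \mathbb R$ realizing \eqref{b1}--\eqref{b2}.

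For sufficiency (the easier direction), given $w_\lambda$ with \eqref{b1}--\eqref{b2}, I would Hahn-decompose any competitor as $\nu - \lambda = (\nu - \lambda)^+ - (\nu - \lambda)^-$ and observe that the inequalities $0\leqslant \nu, \lambda \leqslant \xi$ force $(\nu - \lambda)^+ \leqslant \xi - \lambda$ and $(\nu - \lambda)^- \leqslant \lambda$; in particular, the negative part is concentrated in $S^\lambda_D$. Together with the mass balance $(\nu - \lambda)^+(F) = (\nu - \lambda)^-(F)$ coming from $\nu(F) = \lambda(F) = 1$, applying \eqref{b1} to the positive part and \eqref{b2} to the negative part yields
\[
\bigl\langle W^\lambda_{g^\alpha_D,f}, \nu - \lambda\bigr\rangle \geqslant w_\lambda\bigl[(\nu - \lambda)^+(F) - (\nu - \lambda)^-(F)\bigr] = 0,
\]
as required (all pairings here are finite since $\nu,\lambda \in \mathcal E_{g^\alpha_D,f}(D)$).

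For necessity, assume $\lambda$ solves Problem~\ref{pr2} and set
\[
w_\lambda := \sup \bigl\{ c \in \mathbb R : (\xi - \lambda)\bigl(\{x \in F : W^\lambda_{g^\alpha_D,f}(x) < c\}\bigr) = 0 \bigr\}.
\]
By construction \eqref{b1} holds. The value $w_\lambda$ is finite: an upper bound comes from $W^\lambda_{g^\alpha_D,f}$ being finite n.e.\ in $F$ (hence $(\xi-\lambda)$-a.e., since $\xi \in \mathcal A(F)$ is $C$-absolutely continuous); a lower bound comes from testing Lemma~\ref{lequiv} against perturbations of the form $(1-t)\lambda + t\mu_K$, $\mu_K$ concentrated on a compactum where $W^\lambda_{g^\alpha_D,f}$ would be very negative, which would contradict minimality. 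For \eqref{b2} I argue by contradiction: if some $x_0 \in S^\lambda_D$ satisfied $W^\lambda_{g^\alpha_D,f}(x_0) > w_\lambda + \varepsilon$ for some $\varepsilon > 0$, then by lower semicontinuity of $W^\lambda_{g^\alpha_D,f}$ (from the lsc of $U^\lambda_{g^\alpha_D}$ and of $f|_F$ in Case~I) the set $\{W^\lambda_{g^\alpha_D,f} > w_\lambda + \varepsilon\}$ is open and thus contains a compact $E_0$ with $\lambda(E_0)>0$; by the definition of $w_\lambda$ one may also pick a compact $A_0 \subset \{W^\lambda_{g^\alpha_D,f} < w_\lambda + \varepsilon/2\}$ with $(\xi - \lambda)(A_0) > 0$, and automatically $A_0 \cap E_0 = \emptyset$. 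For small $t>0$, the mass-swap
\[
\nu_t := \lambda + t\Bigl(\frac{(\xi - \lambda)|_{A_0}}{(\xi-\lambda)(A_0)} - \frac{\lambda|_{E_0}}{\lambda(E_0)}\Bigr)
\]
is a valid competitor, but a direct computation gives $\langle W^\lambda_{g^\alpha_D,f}, \nu_t - \lambda\rangle \leqslant -t\varepsilon/2 < 0$, contradicting Lemma~\ref{lequiv}.

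The main technical obstacle will be the finite-energy bookkeeping for the swap construction: verifying $\nu_t \in \mathcal E^\xi_{g^\alpha_D,f}(F,1)$ requires finiteness of the $g^\alpha_D$-energy of $(\xi - \lambda)|_{A_0}$ (supplied by $\xi \in \mathcal A(F)$ and the compactness of $A_0$) as well as finiteness of $\langle f,\mu_{A_0}\rangle$ (supplied by the bound $f \leqslant W^\lambda_{g^\alpha_D,f} < w_\lambda + \varepsilon/2$ on $A_0$, which uses $U^\lambda_{g^\alpha_D} \geqslant 0$). The nonempty-ness of the relevant compacta $E_0, A_0$ rests on the lower semicontinuity of $W^\lambda_{g^\alpha_D,f}$, and this is where the specific form of Case~I enters essentially.
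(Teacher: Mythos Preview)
Your proposal is correct and follows essentially the same route as the paper: both directions go through Lemma~\ref{lequiv}, the constant $w_\lambda$ is defined as the $(\xi-\lambda)$-essential infimum of $W^\lambda_{g^\alpha_D,f}$, and \eqref{b2} is obtained by a mass-swap contradiction. The only differences are cosmetic: for sufficiency the paper splits $F$ into the level sets $F^\pm(w_\lambda)$ rather than using the Hahn decomposition of $\nu-\lambda$, and for the swap the paper removes all of $\lambda|_{K_1}$ (requiring $\lambda(K_1)<(\xi-\lambda)(K_2)$) rather than your small-$t$ perturbation; also, your argument for $w_\lambda>-\infty$ is more elaborate than needed---the paper simply notes that in Case~I, $f$ is bounded from below on~$F$ (and $U^\lambda_{g^\alpha_D}\geqslant0$), so $W^\lambda_{g^\alpha_D,f}$ is itself bounded below.
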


\begin{corollary}\label{cor-02} Let\/ $f\bigl|_D=U^\chi_{g^\alpha_D}$ for some\/ $\chi\in\mathfrak M^+(D)$. If\/ $\lambda$ solves Problem\/~{\rm\ref{pr2}}, then
\begin{equation*}\label{clll}C_\alpha\bigl(\partial D\cap S^{\xi-\lambda}_{\mathbb R^n}\bigr)=0.\end{equation*}
\end{corollary}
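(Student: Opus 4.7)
My plan is to combine the variational characterization of $\lambda$ from Theorem~\ref{th-main''} with the classical boundary vanishing of $\alpha$-Green potentials of nonnegative measures. Under the hypothesis $f\bigl|_D=U^\chi_{g^\alpha_D}$ with $\chi\in\mathfrak M^+(D)$ we have, on $D$,
\[
W^\lambda_{g^\alpha_D,f}(x)=U^\lambda_{g^\alpha_D}(x)+U^\chi_{g^\alpha_D}(x)=U^{\lambda+\chi}_{g^\alpha_D}(x),
\]
the $\alpha$-Green potential of a nonnegative measure. Let $w_\lambda\in\mathbb R$ denote the constant furnished by Theorem~\ref{th-main''} in~(\ref{b1})--(\ref{b2}). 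The first step is to note that $w_\lambda>0$: integrating~(\ref{b2}) against $\lambda$ gives
\[
w_\lambda\geqslant\int W^\lambda_{g^\alpha_D,f}\,d\lambda=\|\lambda\|_{g^\alpha_D}^2+E_{g^\alpha_D}(\chi,\lambda)\geqslant\|\lambda\|_{g^\alpha_D}^2>0,
\]
using $\chi,\lambda\geqslant0$, $\lambda\ne0$, and the strict positive definiteness of $g^\alpha_D$.

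Next, fix $x_0\in K:=\partial D\cap S^{\xi-\lambda}_{\mathbb R^n}$. By the definition of support, every $\mathbb R^n$-open neighborhood $U$ of $x_0$ satisfies $(\xi-\lambda)(U)>0$; since $\xi-\lambda$ is carried by $F\subset D$, also $(\xi-\lambda)(U\cap F)>0$, so $U\cap F$ cannot be contained in the $(\xi-\lambda)$-null exceptional set of~(\ref{b1}). Hence some point of $U\cap F$ satisfies $W^\lambda_{g^\alpha_D,f}\geqslant w_\lambda$, and shrinking $U$ yields a sequence $x_n\in F$ with $x_n\to x_0$ and $W^\lambda_{g^\alpha_D,f}(x_n)\geqslant w_\lambda>0$. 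On the other hand, from~(\ref{G}) together with the continuity of the balayage potentials $U^{\beta^\alpha_{D^c}\varepsilon_y}_{\kappa_\alpha}(x)$ (cf.\ footnote~2) and the equality $U^{\beta^\alpha_{D^c}\varepsilon_y}_{\kappa_\alpha}(x_0)=|x_0-y|^{\alpha-n}$ valid at regular points $x_0\in D^c$, one sees that $g^\alpha_D(\cdot,y)\to0$ at every $\alpha$-regular point of $\partial D$; dominated convergence then gives $\lim_{D\ni x\to x_0}U^{\lambda+\chi}_{g^\alpha_D}(x)=0$ at n.e.\ $x_0\in\partial D$. In particular $W^\lambda_{g^\alpha_D,f}(x_n)\to0$, contradicting the display above. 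Thus $K$ must lie in the set of $\alpha$-irregular points of $\partial D$, a set of $\alpha$-Riesz capacity zero by the $\alpha$-Riesz analogue of the Kellogg theorem, so $C_\alpha(K)=0$.

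The principal technical obstacle is the boundary-vanishing assertion $\lim_{D\ni x\to x_0}U^{\lambda+\chi}_{g^\alpha_D}(x)=0$ at n.e.\ $x_0\in\partial D$. For $\alpha=2$ this is standard Perron--Wiener--Brelot material; for $\alpha<2$ it must be assembled from the balayage framework reviewed in Section~\ref{sec:bala}, combined with the $\alpha$-Riesz Kellogg theorem and the removal of the separate polar set $\{U^{\lambda+\chi}_{\kappa_\alpha}=\infty\}$ (needed to invoke dominated convergence), both of which have $\alpha$-Riesz capacity zero.
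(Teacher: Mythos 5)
Your reduction to $W^\lambda_{g^\alpha_D,f}=U^{\lambda+\chi}_{g^\alpha_D}$, your proof that $w_\lambda>0$ by integrating (\ref{b2}) against $\lambda$, and your construction of a sequence $x_n\in F$ tending to $x_0$ with $W^\lambda_{g^\alpha_D,f}(x_n)\geqslant w_\lambda$ all agree with the paper and are correct. The gap is in the claimed boundary-vanishing step. The statement
$\lim_{D\ni x\to x_0}U^{\lambda+\chi}_{g^\alpha_D}(x)=0$ at n.e.\ $x_0\in\partial D$
is not a consequence of dominated convergence, and in fact it is generally false. Dominated convergence would require a fixed integrable majorant $\psi(y)$ with $g^\alpha_D(x,y)\leqslant\psi(y)$ for all $x$ in a whole neighborhood of $x_0$; no such majorant exists, because $g^\alpha_D(x,\cdot)$ has a singularity that travels with $x$, and $\lambda+\chi$ is allowed to carry mass arbitrarily close to $\partial D$ (indeed $F$ need not be compactly contained in $D$, and $\chi$ is an arbitrary element of $\mathfrak M^+(D)$). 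What the $\alpha$-Kellogg theorem gives at a regular boundary point is the pointwise limit $g^\alpha_D(x,y)\to0$ for each fixed $y$, and from that one can extract $\liminf_{D\ni x\to x_0}U^{\lambda+\chi}_{g^\alpha_D}(x)=0$; but a $\liminf$ does not contradict the existence of your sequence $x_n$ with $W^\lambda_{g^\alpha_D,f}(x_n)\geqslant w_\lambda$, since the $x_n$ may be exactly the ``bad'' directions of approach. Removing the polar set $\{U^{\lambda+\chi}_{\kappa_\alpha}=\infty\}$ does not repair this, because finiteness of the Riesz potential at the single point $x_0$ still does not dominate the kernel on a neighborhood. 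Worse, the place where this control can fail is precisely where $\lambda+\chi$ accumulates, which can be a subset of $\partial D\cap S^{\xi-\lambda}_{\mathbb R^n}$ --- the very set you are trying to control --- so the argument as written is essentially circular.

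The paper sidesteps the boundary-limit issue by never taking a limit across $\partial D$ inside $D$. Instead it extends the weighted potential to all of $\mathbb R^n$ by setting
$\Psi(x):=U_\alpha^{\lambda+\chi}(x)-U_\alpha^{\beta^\alpha_{D^c}(\lambda+\chi)}(x)$,
which agrees with $W^\lambda_{g^\alpha_D,f}$ on $D$. Then \emph{two} exact pieces of information are used: from (\ref{b1}), $\limsup_{D\ni x\to y_0}\Psi(x)\geqslant w_\lambda>0$; and from the defining identity of balayage, $\Psi(x)=0$ for n.e.\ $x\in D^c$, hence (using that $\partial D=\partial(\mathrm{Int}\,D^c)$, so $D^c$ has points of positive capacity near $y_0$) $\liminf_{D^c\ni x\to y_0}\Psi(x)=0$. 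This makes $\Psi$ discontinuous at every $y_0\in\partial D\cap S^{\xi-\lambda}_{\mathbb R^n}$, and one then applies Lusin's type theorem for $\alpha$-Riesz potentials (\cite[Theorem~3.6]{L}) to the finite-energy measures $\lambda+\chi$ and $\beta^\alpha_{D^c}(\lambda+\chi)$ to conclude that the set of such points has $\alpha$-Riesz capacity zero. The crucial move you are missing is the passage from the $\alpha$-Green potential on $D$ to the difference of $\alpha$-Riesz potentials $\Psi$ on $\mathbb R^n$, which replaces a delicate and in general false boundary-limit assertion by the elementary n.e.\ identity $\Psi=0$ on $D^c$.
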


When speaking of the {\it non-weighted case\/} $f=0$, we simply write
\[\mathcal E_{g^\alpha_D}^\xi(F,1):=\bigl\{\nu\in\mathcal E_{g^\alpha_D}^+(F,1): \ \nu\leqslant\xi\bigr\}.\] Then Problem~\ref{pr2} is in fact reduced to that on the existence of $\lambda_0\in\mathcal E_{g^\alpha_D}^\xi(F,1)$ with
\begin{equation}\label{non-w}E_{g^\alpha_D}(\lambda_0)=\inf_{\nu\in\mathcal E_{g^\alpha_D}^\xi(F,1)}\,E_{g^\alpha_D}(\nu).\end{equation}

\begin{corollary}\label{cor-0} Let\/ $f=0$. A measure\/ $\lambda_0\in\mathcal E_{g^\alpha_D}^\xi(F,1)$ solves Problem\/~{\rm\ref{pr2}} if and only if
there exists\/ $w'_{\lambda_0}\in(0,\infty)$ such
that
\begin{align}U^{\lambda_0}_{g^\alpha_D}(x)&=w'_{\lambda_0}\quad(\xi-\lambda_0)\mbox{-a.e.~in\ }F,\label{b1'}\\
U^{\lambda_0}_{g^\alpha_D}(x)&\leqslant w'_{\lambda_0}\quad\mbox{for all \ }x\in D.\label{b2'}\end{align}
If, moreover, $\alpha<2$, then also
\begin{equation}\label{identity}S^{\lambda_0}_D=F.\end{equation}
\end{corollary}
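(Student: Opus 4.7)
\textbf{Proof plan for Corollary~\ref{cor-0}.} The idea is to specialize Theorem~\ref{th-main''} to $f\equiv 0$ and then sharpen its conclusions. With no external field the weighted potential $W^{\lambda_0}_{g^\alpha_D,f}$ reduces to $U^{\lambda_0}_{g^\alpha_D}$, so Theorem~\ref{th-main''} says that $\lambda_0\in\mathcal E^\xi_{g^\alpha_D}(F,1)$ solves~(\ref{non-w}) if and only if there exists $w\in\mathbb R$ such that $U^{\lambda_0}_{g^\alpha_D}(x)\geq w$ holds $(\xi-\lambda_0)$-a.e.\ in~$F$ and $U^{\lambda_0}_{g^\alpha_D}(x)\leq w$ for every $x\in S^{\lambda_0}_D$. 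The plan is to promote the upper bound to all of~$D$, thereby forcing equality at the $(\xi-\lambda_0)$-a.e.\ points, and to verify $w=w'_{\lambda_0}>0$; the converse direction will then be immediate, since~(\ref{b1'})--(\ref{b2'}) trivially imply~(\ref{b1})--(\ref{b2}).

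Setting $\tau:=\beta^\alpha_{D^c}\lambda_0$ and invoking~(\ref{UG}), the bound $U^{\lambda_0}_{g^\alpha_D}\leq w$ on $S^{\lambda_0}_D$ rewrites as $U^{\lambda_0}_\alpha\leq U^\tau_\alpha+w$ on $S^{\lambda_0}$. The $\alpha$-Riesz domination principle, applied to the measures $\lambda_0,\tau\in\mathcal E^+_\alpha(\mathbb R^n)$, then propagates this inequality to all of~$\mathbb R^n$; restricting to~$D$ and using~(\ref{UG}) once more yields~(\ref{b2'}). Combined with the lower bound, this forces the equality~(\ref{b1'}). Positivity $w'>0$ is obtained by contradiction: nonnegativity of $g^\alpha_D$ gives $w'\geq 0$, and $w'=0$ would entail $U^{\lambda_0}_{g^\alpha_D}\equiv 0$ on $S^{\lambda_0}_D$, hence $\|\lambda_0\|^2_{g^\alpha_D}=\bigl\langle U^{\lambda_0}_{g^\alpha_D},\lambda_0\bigr\rangle=0$, contradicting strict positive-definiteness of $g^\alpha_D$ together with $\lambda_0(F)=1$.

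The main obstacle is the final claim~(\ref{identity}): $S^{\lambda_0}_D=F$ when $\alpha<2$. I plan to argue by contradiction. If $x_0\in F\setminus S^{\lambda_0}_D$, choose an open ball $B=B(x_0,r)\subset D$ disjoint from $S^{\lambda_0}_D$. Since $\xi\in\mathcal A(F)$ has support~$F$ and is $C$-absolutely continuous while $\lambda_0|_B=0$, the equality~(\ref{b1'}) forces $U^{\lambda_0}_{g^\alpha_D}=w'$ on a subset of~$B$ of positive $\alpha$-Riesz capacity. The function $v:=U^{\lambda_0}_\alpha-U^\tau_\alpha$ is $\alpha$-harmonic on~$B$ (since neither $\lambda_0$ nor $\tau=\beta^\alpha_{D^c}\lambda_0$ charges~$B$), bounded above by $w'$ on~$D$ by~(\ref{b2'}), and equals $0$ n.e.\ on $D^c$ by the defining property of balayage; combined with $w'>0$, this gives $v\leq w'$ n.e.\ on~$\mathbb R^n$ with equality attained on a positive-capacity subset of~$B$. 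The strategy is then to invoke the strong maximum principle for $\alpha$-harmonic functions (where the nonlocality specific to $\alpha<2$ propagates an attained interior maximum globally on~$\mathbb R^n$) to obtain $v\equiv w'$, contradicting $v=0$ on $D^c$ and $w'>0$. The technical crux is justifying this nonlocal strong-maximum step rigorously; it is precisely the failure of such propagation at $\alpha=2$---where Newton's shell theorem allows classical harmonic potentials to take constant values on nontrivial sets without being globally constant---that restricts~(\ref{identity}) to the regime $\alpha<2$.
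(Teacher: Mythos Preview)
Your proposal is correct and follows essentially the same route as the paper's proof. The paper cites \cite[Theorems~1.27, 1.29]{L} for the domination step and \cite[Theorem~1.28]{L} for the nonlocal strong-comparison step you flag as the crux (that theorem needs equality at only a single point of~$B$, not a positive-capacity set; note also that your domination-principle step already gives $U^{\lambda_0}_\alpha\leq w'+U^\tau_\alpha$ everywhere on~$\mathbb R^n$, which is precisely the hypothesis required there).
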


On account of the uniqueness of a solution to Problem~\ref{pr2}, such a~$w'_{\lambda_0}$ is unique (provided it exists). Integrating~(\ref{b1'}) with respect to~$\xi-\lambda_0$, we
get
\begin{equation}\label{same}w'_{\lambda_0}=\frac{E_{g^\alpha_D}(\lambda_0,\xi-\lambda_0)}{(\xi-\lambda_0)(D)}.\end{equation}

\subsection{Variational inequalities in the unconstrained $\alpha$-Green minimum energy problems}\label{sec-var2} Throughout this section, it is assumed that $\sigma=\infty$.
We proceed with criteria of the solvability of Problem~\ref{pr2}, given in terms of variational inequalities for the $f$-weight\-ed $\alpha$-Green potentials. In the
unconstrained case, the  results obtained take a simpler form if compare with those in the constrained case, while provide us with much more detailed information about the
potentials and the supports of the minimizers.

\begin{theorem}\label{th-infty}Suppose that Case\/~{\rm I} takes place. For\/ $\lambda\in\mathcal E_{g^\alpha_D,f}^+(F,1)$ to solve Problem\/~{\rm\ref{pr2}}, it is necessary and
sufficient that there exist\/ $w_f\in\mathbb R$ possessing the properties
\begin{align}W^\lambda_{g^\alpha_D,f}(x)&\geqslant w_f\quad\mbox{n.e.~in\ }F,\label{F1}\\
W^\lambda_{g^\alpha_D,f}(x)&\leqslant w_f\quad\mbox{for all \ }x\in S^{\lambda}_D.\label{F2}\end{align}
Such a number\/ $w_f$ is unique {\rm(}provided it exists\/{\rm)} and can be given by the formula
\begin{equation*}w_f=\bigl\langle W^\lambda_{g^\alpha_D,f},\lambda\bigr\rangle.\label{winfty}\end{equation*}
\end{theorem}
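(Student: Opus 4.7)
The plan is to derive the criterion directly from the variational inequality of Lemma~\ref{lequiv}, which for $\sigma=\infty$ reads $\langle W^\lambda_{g^\alpha_D,f},\nu-\lambda\rangle\geqslant 0$ for every $\nu\in\mathcal E_{g^\alpha_D,f}^+(F,1)$. I would set $w_f:=\langle W^\lambda_{g^\alpha_D,f},\lambda\rangle$; in Case~I, $f$ is bounded below on $F$, so together with $\|\lambda\|_{g^\alpha_D}^2<\infty$ and the identity $\langle W^\lambda_{g^\alpha_D,f},\lambda\rangle=\|\lambda\|_{g^\alpha_D}^2+\langle f,\lambda\rangle$, this makes $w_f$ a well-defined real number. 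A central auxiliary fact, used throughout, is that since $g^\alpha_D$ is strictly positive definite, any measure of finite $g^\alpha_D$-energy is $C$-absolutely continuous with respect to $g^\alpha_D$-capacity, equivalently (via Lemma~\ref{lemma:equiv}) with respect to $\alpha$-Riesz capacity; hence every n.e.\ property is a $\mu$-a.e.\ property for any such $\mu$.

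For \emph{necessity}, assume $\lambda$ solves Problem~\ref{pr2}. To obtain~(\ref{F1}), I argue by contradiction: if $C_\alpha(E)>0$ where $E:=\{x\in F:W^\lambda_{g^\alpha_D,f}(x)<w_f\}$, then lower semicontinuity of $W^\lambda_{g^\alpha_D,f}$ (l.s.c.\ of $U^\lambda_{g^\alpha_D}$ combined with Case~I's l.s.c.\ of $f|_F$) makes the sets $\{W^\lambda_{g^\alpha_D,f}\leqslant w_f-1/n\}\cap F$ closed and increasing to $E$; standard inner regularity of $C_\alpha$ together with~(\ref{ex}) yields a compact $K\subset F$ and $\varepsilon>0$ with $C_\alpha(K)>0$ and $W^\lambda_{g^\alpha_D,f}\leqslant w_f-\varepsilon$ on $K$. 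By Lemma~\ref{lemma:equiv}, $C_{g^\alpha_D}(K)>0$, so there exists $\mu\in\mathcal E_{g^\alpha_D}^+(K,1)$. Since $U^\lambda_{g^\alpha_D}\geqslant 0$, we obtain $f\leqslant W^\lambda_{g^\alpha_D,f}\leqslant w_f-\varepsilon$ on $K$, and together with the Case~I lower bound on $f$ this makes $f$ bounded on $K$, so $\mu\in\mathcal E_{g^\alpha_D,f}^+(F,1)$. The variational inequality with this $\mu$ gives $\langle W^\lambda_{g^\alpha_D,f},\mu\rangle\geqslant w_f$, contradicting $\langle W^\lambda_{g^\alpha_D,f},\mu\rangle\leqslant w_f-\varepsilon$. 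With (\ref{F1}) established, (\ref{F2}) follows similarly: if $W^\lambda_{g^\alpha_D,f}(x_0)>w_f$ at some $x_0\in S^\lambda_D$, lower semicontinuity produces an open neighborhood $V\subset D$ of $x_0$ and $\varepsilon>0$ with $W^\lambda_{g^\alpha_D,f}>w_f+\varepsilon$ on $V$, and $\lambda(V)>0$; transferring (\ref{F1}) to a $\lambda$-a.e.\ statement on $F\setminus V$,
\[
w_f=\int W^\lambda_{g^\alpha_D,f}\,d\lambda\geqslant(w_f+\varepsilon)\lambda(V)+w_f\bigl(1-\lambda(V)\bigr)=w_f+\varepsilon\lambda(V)>w_f,
\]
a contradiction.

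For \emph{sufficiency}, suppose (\ref{F1}) and (\ref{F2}) hold. Integrating (\ref{F2}) against $\lambda$ (concentrated on $S^\lambda_D$) gives $\langle W^\lambda_{g^\alpha_D,f},\lambda\rangle\leqslant w_f$, while integrating (\ref{F1}) against $\lambda$, now a $\lambda$-a.e.\ statement by $C$-absolute continuity, gives the reverse inequality; hence $\langle W^\lambda_{g^\alpha_D,f},\lambda\rangle=w_f$, which simultaneously proves the asserted formula. For any test measure $\nu\in\mathcal E_{g^\alpha_D,f}^+(F,1)$, the same $C$-absolute continuity applied to $\nu$ turns (\ref{F1}) into a $\nu$-a.e.\ statement, yielding $\langle W^\lambda_{g^\alpha_D,f},\nu\rangle\geqslant w_f=\langle W^\lambda_{g^\alpha_D,f},\lambda\rangle$. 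Lemma~\ref{lequiv} then shows $\lambda$ solves Problem~\ref{pr2}. Uniqueness of $w_f$ is immediate from the formula $w_f=\langle W^\lambda_{g^\alpha_D,f},\lambda\rangle$.

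The main obstacle is the necessity half of (\ref{F1}): constructing a test measure $\mu\in\mathcal E_{g^\alpha_D,f}^+(F,1)$ supported on the contradiction set $K$. The capacity-theoretic steps (passing from positive inner capacity to a uniformly controlled compact subset and between $\alpha$-Riesz and $g^\alpha_D$-capacities) are standard, but the critical and slightly delicate point is that the very inequality $W^\lambda_{g^\alpha_D,f}\leqslant w_f-\varepsilon$ on $K$, combined with $U^\lambda_{g^\alpha_D}\geqslant 0$ and the lower bound on $f$ guaranteed by Case~I, automatically bounds $f$ on $K$; without this observation one could not conclude that $\mu$ has finite \emph{weighted} $g^\alpha_D$-energy, and the whole argument would collapse.
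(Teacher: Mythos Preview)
Your argument is correct and self-contained. The paper does not actually prove Theorem~\ref{th-infty}; it merely records it as a special case of results in \cite{ZPot2} and \cite{Z5a}. Your derivation from Lemma~\ref{lequiv} is the natural route and is almost certainly the one taken in those references, so in spirit the approaches coincide; the difference is that you have written out the details within the framework of the present paper rather than outsourcing them.

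Two minor remarks on presentation. First, in the necessity of~(\ref{F1}), the passage from $C_\alpha(E)>0$ to $C_\alpha(E_n)>0$ for some level set $E_n=\{W^\lambda_{g^\alpha_D,f}\leqslant w_f-1/n\}\cap F$ relies on countable subadditivity of~$C_\alpha$ over Borel sets (the paper itself invokes this fact in the proof of Theorem~\ref{alphaeq1}); you might state this explicitly rather than folding it into ``standard inner regularity.'' Second, in the necessity of~(\ref{F2}), the lower semicontinuity you use is that of $W^\lambda_{g^\alpha_D,f}$ on~$F$ (not on~$D$), so the neighborhood~$V$ should strictly be taken as a relatively open subset of~$F$; since $\lambda$ is concentrated on~$F$ this changes nothing in the computation, but it is worth saying. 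Neither point affects the validity of the proof.
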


\begin{corollary}\label{cor-infty}Let Problem\/~{\rm\ref{pr2}} be solvable. Then the following two assertions hold:
\begin{itemize}
\item[\rm(a)] If Case\/~{\rm II} with\/ $\zeta\geqslant0$ takes place, then\/ $C_{g^\alpha_D}(F)<\infty$;
\item[\rm(b)] If\/ $f\bigl|_D=U^\chi_{g^\alpha_D}$ for\/ $\chi\in\mathfrak M^+(D)$, then\/ $C_\alpha\bigl(\partial D\cap{\mathrm C\ell}_{\mathbb R^n}\breve{F}\bigr)=0$.
\end{itemize}
\end{corollary}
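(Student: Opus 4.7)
For part~(a), apply Theorem~\ref{th-infty} to the minimizer $\lambda$: it provides a constant $w_f=\|\lambda\|^2_{g^\alpha_D}+\langle f,\lambda\rangle$ such that $W^\lambda_{g^\alpha_D,f}(x)\geqslant w_f$ n.e.\ in~$F$. In Case~II with $\zeta\geqslant 0$, relation~(\ref{UG}) gives $f\bigl|_D=U^\zeta_{g^\alpha_D}$, so $W^\lambda_{g^\alpha_D,f}=U^{\lambda+\zeta}_{g^\alpha_D}$ and $w_f=\|\lambda\|^2_{g^\alpha_D}+E_{g^\alpha_D}(\zeta,\lambda)>0$, since $\lambda,\zeta\geqslant 0$ with $\lambda\ne 0$ and the kernel $g^\alpha_D$ is strictly positive definite. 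For arbitrary $\mu\in\mathcal E^+_{g^\alpha_D}(F,1)$, Lemma~\ref{lemma:equiv} shows that $\mu$ vanishes on sets of $\alpha$-Riesz capacity zero, so integrating the variational inequality against $\mu$ yields
\[E_{g^\alpha_D}(\lambda+\zeta,\mu)=\bigl\langle U^{\lambda+\zeta}_{g^\alpha_D},\mu\bigr\rangle\geqslant w_f.\]
Cauchy--Schwarz in the pre-Hilbert space $\mathcal E_{g^\alpha_D}(D)$ then forces $\|\mu\|_{g^\alpha_D}\geqslant w_f/\|\lambda+\zeta\|_{g^\alpha_D}$; taking infimum over $\mu$ gives $w_{g^\alpha_D}(F)>0$, i.e.\ $C_{g^\alpha_D}(F)<\infty$.

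For part~(b), with $f\bigl|_D=U^\chi_{g^\alpha_D}$, $\chi\in\mathfrak M^+(D)$, Theorem~\ref{th-infty} again gives $U^{\lambda+\chi}_{g^\alpha_D}(x)\geqslant w_f$ n.e.\ in~$F$ with $w_f\geqslant\|\lambda\|^2_{g^\alpha_D}>0$. I argue by contradiction, supposing $C_\alpha(\partial D\cap{\mathrm C\ell}_{\mathbb R^n}\breve F)>0$. By the Kellogg-type theorem for the $\alpha$-Riesz kernel, the set of $\alpha$-irregular points of $D^c$ has $\alpha$-capacity zero, so one can select an $\alpha$-regular $x_0\in\partial D\cap{\mathrm C\ell}_{\mathbb R^n}\breve F$. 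Let $E\subset F$ be the exceptional set (of zero $\alpha$-capacity) where the variational inequality fails. Using the definition~(\ref{red}) of $\breve F$ together with countable subadditivity of $C_\alpha$, choose $y_n\in\breve F$ with $|y_n-x_0|<1/(2n)$ and then $x_n\in(B(y_n,1/(2n))\cap F)\setminus E$; this intersection is nonempty since $C_\alpha(B(y_n,1/(2n))\cap F)>0=C_\alpha(E)$. Thus $x_n\to x_0$ and $U^{\lambda+\chi}_{g^\alpha_D}(x_n)\geqslant w_f$ for every~$n$.

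To close the contradiction, it is necessary to verify that $\limsup_{n\to\infty}U^{\lambda+\chi}_{g^\alpha_D}(x_n)\leqslant 0$. The $\alpha$-regularity of $x_0$ delivers, for each fixed $y\in D$, the pointwise convergence $g^\alpha_D(x,y)\to 0$ as $x\to x_0$ through $D$, via~(\ref{G}) and the identity $U^{\beta^\alpha_{D^c}\varepsilon_y}_\alpha(x_0)=U^{\varepsilon_y}_\alpha(x_0)$. Upgrading this pointwise statement to convergence of the integrals $\int g^\alpha_D(x_n,y)\,d(\lambda+\chi)(y)$ is the technical heart of the proof, and constitutes the main obstacle: I plan to split $\lambda+\chi$ according to a small ball around $x_0$, applying dominated convergence on the complement (using $g^\alpha_D\leqslant\kappa_\alpha$ and continuity of $\kappa_\alpha(\cdot,y)$ off the diagonal) and controlling the local contribution through the Case~I lower semicontinuity of $f\bigl|_F$ together with the finite $g^\alpha_D$-energy of~$\lambda$. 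This limiting argument is where $\chi$ being only a positive Radon measure (rather than of finite energy) requires real care.
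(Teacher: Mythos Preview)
Your argument for~(a) is correct and in fact more self-contained than the paper's, which simply invokes Theorem~\ref{infcap}: the contrapositive of that theorem immediately yields $C_{g^\alpha_D}(F)<\infty$ whenever Problem~\ref{pr2} with $\sigma=\infty$ is solvable in Case~II with $\zeta\geqslant0$. Your direct estimate via Cauchy--Schwarz is a legitimate alternative.

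For~(b), however, your approach has the genuine gap you yourself flag. Passing from the pointwise limit $g^\alpha_D(x_n,y)\to0$ to $\int g^\alpha_D(x_n,y)\,d(\lambda+\chi)(y)\to0$ is not obviously achievable, precisely because $\chi$ is only assumed to be a positive Radon measure on~$D$. The local piece near $x_0$ cannot be controlled by lower semicontinuity of $f\bigl|_F$ (the point $x_0$ lies on~$\partial D$, outside~$F$), and there is no energy or integrability hypothesis on~$\chi$ to fall back on. Without such control the contradiction never closes.

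The paper sidesteps this difficulty entirely by working with the $\alpha$-Riesz representation rather than the Green kernel. Set
\[
\Psi(x):=U^{\lambda+\chi}_\alpha(x)-U^{\beta^\alpha_{D^c}(\lambda+\chi)}_\alpha(x),\qquad x\in\mathbb R^n.
\]
Then $\Psi\geqslant0$ everywhere, $\Psi=0$ n.e.\ in~$D^c$, and (by~(\ref{F1})) $\Psi\geqslant w_f>0$ n.e.\ in~$F$. Using the standing assumption that $\partial D=\partial(\mathrm{Int}\,D^c)$, every point $y_0\in\partial D\cap\mathrm{Cl}_{\mathbb R^n}\breve F$ is therefore a discontinuity point of~$\Psi$: approaching through $\mathrm{Int}\,D^c$ gives $\liminf\Psi=0$, while approaching through~$F$ (using the definition of~$\breve F$ exactly as you do) gives $\limsup\Psi\geqslant w_f>0$. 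But $\Psi$ is the difference of two $\alpha$-Riesz potentials of nonnegative measures, so the Lusin-type theorem for Riesz potentials \cite[Theorem~3.6]{L} forces its discontinuity set to have $\alpha$-Riesz capacity zero. That is exactly the conclusion of~(b), with no limit interchange and no hypothesis on~$\chi$ beyond positivity required.
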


\begin{corollary}\label{cor-0'-infty}Let\/ $f=0$. Then\/ $\lambda_F\in\mathcal E_{g^\alpha_D}^+(F,1)$ solves Problem\/~{\rm\ref{pr2}} if and only if there exists a number\/
$w\in(0,\infty)$ admitting the properties
\begin{align}U^{\lambda_F}_{g^\alpha_D}(x)&=w\quad\mbox{n.e.~in\ }F,\label{w0-infty}\\
U^{\lambda_F}_{g^\alpha_D}(x)&\leqslant w\quad\mbox{for all \ }x\in D.\label{w1-infty}\end{align}
Such a number\/ $w$ is unique {\rm(}provided it exists\/{\rm)} and can be written in the form
\begin{equation*}\label{wF1}w=E_{g^\alpha_D}(\lambda_F)=w_{g^\alpha_D}(F)=\bigl[C_{g^\alpha_D}(F)\bigr]^{-1}.\end{equation*}
      Furthermore, if the minimizer\/~$\lambda_F$ exists, then it is the unique measure in the class\/ $\mathcal E_{g^\alpha_D}^+(F,1)$ whose\/ $\alpha$-Green potential is
      constant n.e.~in\/~$F$. Namely, if\/ $\nu\in\mathcal
E^+_{g_D^\alpha}(F,1)$ and\/ $U_{g_D^\alpha}^\nu(x)=c$ n.e.~in\/~$F$, where\/ $c\in\mathbb R$,
then\/ $\nu=\lambda_F$.\end{corollary}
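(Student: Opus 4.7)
The plan is to derive Corollary~\ref{cor-0'-infty} as a specialization of Theorem~\ref{th-infty} to the case $f \equiv 0$, with two refinements needed to bridge the gap between the generic variational inequalities (\ref{F1})--(\ref{F2}) and the sharper conditions (\ref{w0-infty})--(\ref{w1-infty}). Since $f \equiv 0$ is lower semicontinuous and vanishes everywhere, Case~I trivially holds, and Theorem~\ref{th-infty} yields that $\lambda_F \in \mathcal E^+_{g^\alpha_D}(F,1)$ solves Problem~\ref{pr2} if and only if there exists $w_f \in \mathbb R$ with $U^{\lambda_F}_{g^\alpha_D} \geq w_f$ n.e.\ in $F$ and $U^{\lambda_F}_{g^\alpha_D} \leq w_f$ on $S^{\lambda_F}_D$. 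The formula $w_f = \langle U^{\lambda_F}_{g^\alpha_D}, \lambda_F\rangle$ from that theorem identifies $w_f = E_{g^\alpha_D}(\lambda_F)$, which by the defining property of $\lambda_F$ as a minimizer equals $w_{g^\alpha_D}(F) = [C_{g^\alpha_D}(F)]^{-1}$. To see that $w := w_f \in (0,\infty)$, the standing hypothesis $C_\alpha(F) > 0$ combined with Lemma~\ref{lemma:equiv} gives $C_{g^\alpha_D}(F) > 0$, while the solvability of Problem~\ref{pr2} together with Corollary~\ref{cor-infty}(a) (applied with $\zeta \equiv 0 \geq 0$) forces $C_{g^\alpha_D}(F) < \infty$.

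To promote the bound $U^{\lambda_F}_{g^\alpha_D} \leq w$ from $S^{\lambda_F}_D$ to all of $D$, I would invoke the maximum (domination) principle for the $\alpha$-Green kernel: whenever $\mu$ is a nonnegative measure of finite $\alpha$-Green energy and $U^\mu_{g^\alpha_D} \leq c$ holds on $S^\mu_D$, the same bound holds throughout $D$. This is the classical maximum principle for Green potentials when $\alpha = 2$, and for $0 < \alpha < 2$ follows via the Riesz decomposition~(\ref{UG}) together with Frostman's maximum principle for the $\alpha$-Riesz kernel. Combined with the lower bound $U^{\lambda_F}_{g^\alpha_D} \geq w$ n.e.\ on $F$ coming from~(\ref{F1}), this yields both (\ref{w0-infty}) and (\ref{w1-infty}). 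The converse is immediate: if $\lambda$ satisfies (\ref{w0-infty}) and (\ref{w1-infty}), then in particular (\ref{F1}) and (\ref{F2}) hold for $f \equiv 0$, and Theorem~\ref{th-infty} guarantees that $\lambda$ solves Problem~\ref{pr2}.

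For the final assertion, suppose $\nu \in \mathcal E^+_{g^\alpha_D}(F,1)$ satisfies $U^\nu_{g^\alpha_D}(x) = c$ n.e.\ on $F$ for some $c \in \mathbb R$. Both $\nu$ and $\lambda_F$ have finite $\alpha$-Green energy, and by Lemma~\ref{lemma:equiv} each is $C$-absolutely continuous with respect to $\alpha$-Riesz (equivalently, $\alpha$-Green) capacity, so neither charges the exceptional set in any n.e.\ statement on $F$. Integrating $U^{\lambda_F}_{g^\alpha_D} = w$ n.e.\ on $F$ against $\nu$ gives $E_{g^\alpha_D}(\nu,\lambda_F) = w$, while integrating $U^\nu_{g^\alpha_D} = c$ n.e.\ on $F$ against $\lambda_F$ gives $E_{g^\alpha_D}(\lambda_F,\nu) = c$, so the symmetry of the mutual energy forces $c = w$. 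Integrating $U^\nu_{g^\alpha_D} = w$ against $\nu$ itself then yields $E_{g^\alpha_D}(\nu) = w = E_{g^\alpha_D}(\lambda_F)$, whence
\[
\|\nu - \lambda_F\|^2_{g^\alpha_D} = E_{g^\alpha_D}(\nu) - 2E_{g^\alpha_D}(\nu,\lambda_F) + E_{g^\alpha_D}(\lambda_F) = w - 2w + w = 0,
\]
and strict positive definiteness of $g^\alpha_D$ forces $\nu = \lambda_F$. The main obstacle in the whole argument is the upgrade step of the second paragraph: promoting the upper bound from $S^{\lambda_F}_D$ to all of $D$. Though standard in flavor, for $0 < \alpha < 2$ this rests on the $\alpha$-Green maximum principle, whose justification requires careful bookkeeping through the balayage decomposition~(\ref{UG}) and the corresponding property of the $\alpha$-Riesz kernel.
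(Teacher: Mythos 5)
Your proposal is correct and follows essentially the same route as the paper: specialize Theorem~\ref{th-infty} to $f=0$, use the balayage identity~(\ref{UG}) together with the $\alpha$-Riesz domination principle to lift the upper bound from $S^{\lambda_F}_D$ to all of $\mathbb R^n$ (and hence to $D$), and settle the final uniqueness claim by integrating the constant-potential identities and invoking strict positive definiteness of~$g^\alpha_D$. One small note on attribution: after passing to the Riesz potential via~(\ref{UG}), the dominating function $w + U_\alpha^{\beta^\alpha_{D^c}\lambda_F}$ is superharmonic rather than constant, so the step rests on the second (domination) maximum principle rather than Frostman's first maximum principle --- this is exactly why the paper cites both \cite[Theorems~1.27, 1.29]{L}; likewise, the fact that a finite-energy measure does not charge a set of zero capacity is a general property of strictly positive definite kernels (Fuglede) rather than a consequence of Lemma~\ref{lemma:equiv}, though the conclusion you draw from it is correct.
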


Recall that the reduced kernel $\breve{F}$ of $F$ has been defined by~(\ref{red}).  For the sake of simplicity, in the following assertion we assume that, if $\alpha=2$, then $D\setminus F$ is connected.

\begin{corollary}\label{cor-sup}Let\/ $f=0$. If\/ $\lambda_F$ solves Problem\/~{\rm\ref{pr2}}, then, in addition to\/~{\rm(\ref{w0-infty})} and\/~{\rm(\ref{w1-infty})}, we have
\begin{equation}\label{less}U_{g^\alpha_D}^{\lambda_F}(x)<w\quad\text{for all \
}x\in D\setminus\breve{F}.\end{equation}
Furthermore,
\begin{equation}\label{desc-sup}
 S^{\lambda_F}_{D}=\left\{
\begin{array}{cll} \breve{F} & \mbox{if} & \alpha<2,\\ \partial_D\breve{F} & \mbox{if} & \alpha=2.\\ \end{array} \right.
\end{equation}
\end{corollary}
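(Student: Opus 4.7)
My plan is to first establish the inclusion $S^{\lambda_F}_D \subseteq \breve{F}$, then prove the strict inequality~(\ref{less}), and finally deduce the support description~(\ref{desc-sup}), handling $\alpha<2$ and $\alpha=2$ separately in the last two steps.

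\textbf{Containment in $\breve{F}$.} A Lindel\"of-type covering argument shows $F \setminus \breve{F}$ is $\alpha$-polar: by definition of~$\breve F$, each $x \in F \setminus \breve{F}$ admits $\varepsilon_x > 0$ with $C_\alpha(B(x,\varepsilon_x) \cap F) = 0$, and countable subadditivity of $\alpha$-Riesz capacity together with second-countability of $\mathbb{R}^n$ give $C_\alpha(F \setminus \breve{F}) = 0$. By Lemma~\ref{lemma:equiv}, this set is also $g^\alpha_D$-polar. Since $\lambda_F$ has finite $g^\alpha_D$-energy, it assigns zero mass to every $g^\alpha_D$-polar set; noting that $\breve{F}$ is closed in $D$ (its complement is easily seen to be open), we conclude $S^{\lambda_F}_D \subseteq \breve{F}$.

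\textbf{Strict inequality (\ref{less}).} Fix $x_0 \in D \setminus \breve{F}$ and a ball $B=B(x_0,\varepsilon) \subset D$ disjoint from $\breve{F}$, and hence from $S^{\lambda_F}_D$. The continuity of $g^\alpha_D$ off the diagonal (cf.\ the footnote after~(\ref{G})) makes $U^{\lambda_F}_{g^\alpha_D}$ continuous on $B$. Suppose, for contradiction, $U^{\lambda_F}_{g^\alpha_D}(x_0) = w$. For $\alpha = 2$, harmonicity of $U^{\lambda_F}_{g^2_D}$ on $D \setminus S^{\lambda_F}_D$ together with the strong maximum principle forces $U^{\lambda_F}_{g^2_D} \equiv w$ on the connected component of $D \setminus S^{\lambda_F}_D$ containing $x_0$; by the hypothesis that $D \setminus F$ is connected and the inclusion $S^{\lambda_F}_D \subseteq F$, this component contains all of $D \setminus F$ and approaches $\partial D$ at regular boundary points where $g^2_D(\cdot,y) \to 0$, yielding $w=0$, contradicting $w = [C_{g^2_D}(F)]^{-1} > 0$. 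For $\alpha < 2$, one uses the Riesz--balayage decomposition~(\ref{UG}): both $\lambda_F$ and $\beta^\alpha_{D^c}\lambda_F$ are supported off $D \setminus \breve{F}$, so $U^{\lambda_F}_{g^\alpha_D}$ is $\alpha$-harmonic there, and the Riesz minimum principle (cf.~\cite[Chapter~IV]{L}) rules out an interior maximum at~$x_0$, again giving a contradiction through the vanishing of $U^{\lambda_F}_{g^\alpha_D}$ at~$\partial D$.

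\textbf{Support description (\ref{desc-sup}).} For $\alpha<2$, suppose $x_0 \in \breve{F} \setminus S^{\lambda_F}_D$; a small ball $B \ni x_0$ is disjoint from $S^{\lambda_F}_D$, hence $U^{\lambda_F}_{g^\alpha_D}$ is continuous on $B$. Since $x_0 \in \breve{F}$, every neighborhood of $x_0$ meets $F$ in a set of positive $\alpha$-capacity, while the exceptional set where Corollary~\ref{cor-0'-infty} fails on $F$ is $\alpha$-polar; a sequence $y_n \to x_0$ with $U^{\lambda_F}_{g^\alpha_D}(y_n) = w$ therefore exists, and continuity gives $U^{\lambda_F}_{g^\alpha_D}(x_0) = w$, contradicting~(\ref{less}). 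Hence $S^{\lambda_F}_D = \breve{F}$. For $\alpha = 2$, on $\mathrm{Int}_D \breve{F} \setminus S^{\lambda_F}_D$, harmonicity of $U^{\lambda_F}_{g^2_D}$ together with $U^{\lambda_F}_{g^2_D} = w$ on the complement of an $\alpha$-polar set (which is topologically dense since polar sets have empty interior in $\mathbb{R}^n$, $n\geqslant 3$) forces $U^{\lambda_F}_{g^2_D} \equiv w$ there; the distributional identity $-c_n \Delta U^{\lambda_F}_{g^2_D} = \lambda_F$ on $D$ then excludes $\lambda_F$ from charging $\mathrm{Int}_D \breve{F}$, so $S^{\lambda_F}_D \subseteq \partial_D \breve{F}$. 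The reverse inclusion follows from the variational characterization of~$\lambda_F$ combined with (\ref{less}): removal of any boundary point from the support would violate $U^{\lambda_F}_{g^2_D} = w$ n.e.\ on $\breve{F}$ via harmonic-measure considerations.

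The principal obstacle lies in the strict inequality~(\ref{less}) when $\alpha < 2$: classical harmonicity tools are unavailable and one must replace the maximum-principle argument with its Riesz analog, tracking the $\alpha$-harmonicity of both $U^{\lambda_F}_\alpha$ and $U^{\beta^\alpha_{D^c}\lambda_F}_\alpha$ on $D \setminus \breve{F}$ via~(\ref{UG}) and invoking the nonlocal minimum principle; the boundary decay of~$U^{\lambda_F}_{g^\alpha_D}$ at $\partial D$ (crucial for closing the contradiction) requires care due to the potential $\alpha$-irregularity of boundary points.
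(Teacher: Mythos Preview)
Your argument has a genuine gap in the $\alpha<2$ case of the strict inequality~(\ref{less}), precisely at the point you flag as the ``principal obstacle.'' You propose to close the contradiction by invoking a Riesz maximum principle together with the decay of $U^{\lambda_F}_{g^\alpha_D}$ at~$\partial D$. But for $\alpha<2$ the relevant maximum principle is nonlocal: knowing that an $\alpha$-harmonic function in an open set $\Omega$ attains an interior maximum tells you nothing unless you control the function on all of $\mathbb R^n\setminus\Omega$, not merely on~$\partial\Omega$. Boundary decay at $\partial D$ is therefore neither sufficient nor the correct mechanism, and your concern about $\alpha$-irregular boundary points is a red herring---the paper's proof never looks at $\partial D$ at all.

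What the paper does instead is work with the two $\alpha$-Riesz potentials $U_\alpha^{\lambda_F}$ and $w+U_\alpha^{\beta^\alpha_{D^c}\lambda_F}$, both defined on the whole of~$\mathbb R^n$. The inequality $U_\alpha^{\lambda_F}(x)\leqslant w+U_\alpha^{\beta^\alpha_{D^c}\lambda_F}(x)$ for \emph{every} $x\in\mathbb R^n$ (relation~(\ref{ppp-new})) was already obtained in the proof of Corollary~\ref{cor-0'-infty} from~(\ref{F2}) via the domination principle \cite[Theorems~1.27, 1.29]{L}. If equality holds at some $x_0\in D\setminus S^{\lambda_F}_D$, then \cite[Theorem~1.28]{L} (an $\alpha$-harmonic function touching an $\alpha$-superharmonic majorant from below) propagates equality $m_n$-a.e.\ on~$\mathbb R^n$---not just on some subdomain. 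In particular equality holds $m_n$-a.e.\ on $D^c$, where one also has $U_\alpha^{\beta^\alpha_{D^c}\lambda_F}=U_\alpha^{\lambda_F}$ n.e.; since $m_n(D^c)>0$ under the standing assumption $\partial D=\partial({\rm Int}\,D^c)$, this forces $w=0$, a contradiction. The crucial point you are missing is that the contradiction comes from $D^c$, not from~$\partial D$.

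A secondary issue: for the $\alpha=2$ reverse inclusion $\partial_D\breve F\subseteq S^{\lambda_F}_D$, your ``harmonic-measure considerations'' is too vague to be a proof. The paper's concrete argument is short: if some $y\in\partial_D\breve F$ had a neighborhood $V_1$ missing $S^{\lambda_F}_D$, then $U^{\lambda_F}_{g^2_D}$ is harmonic on $V_1$, equals $w$ at some point of $V_1\cap F$ by~(\ref{w0-infty}) (this intersection has positive capacity since $y\in\breve F$), hence equals $w$ throughout $V_1$ by the maximum principle; but $V_1$ meets $D\setminus\breve F$, contradicting~(\ref{less}).
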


\subsection{Duality relation between
non-weighted constrained and weighted unconstrained $\alpha$-Green minimum energy problems}\label{sec-var7} Throughout this section, $F$ is compact. Consider the non-weighted Problem~\ref{pr2} with a constraint $\xi\in\mathfrak C(F)$ whose potential $U^\xi_{g^\alpha_D}(x)$ is continuous.\footnote{When speaking of a continuous function, we
understand that the values are {\it finite\/} numbers.} Note that then $E_{g^\alpha_D}(\xi)<\infty$. Also assume that $\lambda_0$ is its solution, i.e.~both
$\lambda_0\in\mathcal E^\xi_{g^\alpha_D}(F,1)$ and (\ref{non-w}) hold. Write
\[\theta:=q(\xi-\lambda_0),\quad\text{where \ }q:=\frac{1}{\xi(F)-1}. \]

Combining Corollary \ref{cor-0} and Theorem \ref{th-infty} allows us to formulate the following result.
\begin{theorem}\label{th-dual'} The measure\/ $\theta$ solves Problem\/~{\rm\ref{pr2}} with the external field\/ $f(x):=-qU^\xi_{g^\alpha_D}(x)$ in
 both the unconstrained and the\/ $q\xi$-constrained settings, i.e.
\[\theta\in\mathcal E^{q\xi}_{g^\alpha_D,f}(F,1)\subset\mathcal E^+_{g^\alpha_D,f}(F,1)\text{ \ and \ }G_{g^\alpha_D,f}(\theta)=G_{g^\alpha_D,f}^{q\xi}(F,1)=G_{g^\alpha_D,f}(F,1).\] Moreover,
\begin{align}\label{D1} W^{\theta}_{g^\alpha_D,f}(x)&=-qw'_{\lambda_0} \quad \mbox{on\ }S^{\theta}_D,\\
W^{\theta}_{g^\alpha_D,f}(x)&\geqslant -qw'_{\lambda_0} \quad\mbox{on\ }D,\label{D2}\end{align}
where\/ $w'_{\lambda_0}$ is the number determined by\/~{\rm(\ref{same})}.
\end{theorem}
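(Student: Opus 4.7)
The plan is to derive variational inequalities for $\theta$ from those for $\lambda_0$ and then invoke Theorem~\ref{th-infty}. First I verify that $\theta\in\mathcal E^{q\xi}_{g^\alpha_D,f}(F,1)\subset\mathcal E^+_{g^\alpha_D,f}(F,1)$: nonnegativity follows from the constraint $\lambda_0\leqslant\xi$, the normalization $\theta(F)=q(\xi(F)-1)=1$ is built into the choice of~$q$, the bound $\theta\leqslant q\xi$ is immediate, and both $\|\theta\|^2_{g^\alpha_D}$ and $\langle f,\theta\rangle$ are finite since $\xi,\lambda_0\in\mathcal E_{g^\alpha_D}(D)$ and $U^\xi_{g^\alpha_D}$ is bounded on the compact set~$F$.

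Using $\theta=q(\xi-\lambda_0)$ and additivity of potentials, one computes
\[W^\theta_{g^\alpha_D,f}(x)=q\bigl(U^\xi_{g^\alpha_D}(x)-U^{\lambda_0}_{g^\alpha_D}(x)\bigr)-qU^\xi_{g^\alpha_D}(x)=-qU^{\lambda_0}_{g^\alpha_D}(x).\]
Applying Corollary~\ref{cor-0} to $\lambda_0$, the global bound (\ref{b2'}) translates at once into (\ref{D2}) with constant $-qw'_{\lambda_0}$. The substantive step is to upgrade (\ref{b1'}), namely $U^{\lambda_0}_{g^\alpha_D}=w'_{\lambda_0}$ $(\xi-\lambda_0)$-a.e.\ in $F$, to equality \emph{at every point} of $S^\theta_D=S^{\xi-\lambda_0}_D$, which is what (\ref{D1}) demands. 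For this I observe that $U^{\lambda_0}_{g^\alpha_D}$ is actually continuous on~$D$: the identity $U^\xi_{g^\alpha_D}=U^{\lambda_0}_{g^\alpha_D}+U^{\xi-\lambda_0}_{g^\alpha_D}$ realizes the continuous function $U^\xi_{g^\alpha_D}$ as a sum of two lower semicontinuous functions (the $\alpha$-Green kernel being LSC), and this forces each summand to be simultaneously USC, hence continuous. Given any $x\in S^{\xi-\lambda_0}_D$, each neighborhood of~$x$ has positive $(\xi-\lambda_0)$-measure while the exceptional set of (\ref{b1'}) is $(\xi-\lambda_0)$-null, so there is a sequence $y_k\to x$ with $U^{\lambda_0}_{g^\alpha_D}(y_k)=w'_{\lambda_0}$; continuity then yields $U^{\lambda_0}_{g^\alpha_D}(x)=w'_{\lambda_0}$, establishing (\ref{D1}).

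With (\ref{D1}) and (\ref{D2}) available for the constant $w_f:=-qw'_{\lambda_0}$, Theorem~\ref{th-infty} (applicable in Case~I since $F$ is compact and $f\bigl|_F$ is continuous) shows that $\theta$ solves the unconstrained Problem~\ref{pr2} for the external field~$f$. Because $\theta\in\mathcal E^{q\xi}_{g^\alpha_D,f}(F,1)\subset\mathcal E^+_{g^\alpha_D,f}(F,1)$ and $\theta$ minimizes $G_{g^\alpha_D,f}$ over the larger class, it a fortiori minimizes it over the smaller $q\xi$-constrained class, yielding $G_{g^\alpha_D,f}^{q\xi}(F,1)=G_{g^\alpha_D,f}(F,1)=G_{g^\alpha_D,f}(\theta)$; uniqueness is already covered by Lemma~\ref{uniqueness}. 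The main obstacle is the continuity upgrade that secures (\ref{D1}); the rest is bookkeeping once the identity $W^\theta_{g^\alpha_D,f}=-qU^{\lambda_0}_{g^\alpha_D}$ is in hand.
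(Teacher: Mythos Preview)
Your proof is correct and follows essentially the same route as the paper's: both establish continuity of $U^{\lambda_0}_{g^\alpha_D}$ via the LSC/USC decomposition of the continuous $U^\xi_{g^\alpha_D}$, upgrade (\ref{b1'}) from $(\xi-\lambda_0)$-a.e.\ to everywhere on $S^{\xi-\lambda_0}_D$, rewrite the resulting identities as (\ref{D1})--(\ref{D2}), and then invoke the characterization theorems. The only cosmetic difference is that the paper cites both Theorem~\ref{th-main''} and Theorem~\ref{th-infty} to cover the constrained and unconstrained cases separately, whereas you apply Theorem~\ref{th-infty} once and then use the inclusion $\mathcal E^{q\xi}_{g^\alpha_D,f}(F,1)\subset\mathcal E^+_{g^\alpha_D,f}(F,1)$ to deduce the constrained minimality for free.
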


\section{Variational inequalities for the $f$-weighted $\alpha$-Riesz potentials}\label{sec-main-var-riesz}
This section is devoted to necessary and/or sufficient conditions for the solvability of Problem~\ref{pr} with $\sigma\in\mathfrak C(F)\cup\{\infty\}$, given in terms of
variational inequalities for the $f$-weight\-ed $\alpha$-Riesz potentials. Throughout this section, we assume $D^c$ to be not $\alpha$-thin at~$\omega$.

Then, by Lemma~\ref{lemma-aux''}, for $\lambda^\sigma_{\mathbf A}=\lambda^+-\lambda^-$ to solve Problem~\ref{pr}, it is necessary and sufficient that $\lambda^+$ solve
Problem~\ref{pr2} with the same~$\sigma$. Furthermore, by~(\ref{reprrr}),
\begin{equation}\label{rel-bala}\lambda^-=\beta^\alpha_{D^c}\lambda^+,\end{equation}
which yields
\begin{equation*}\label{UW}W_{\alpha,f}^{\lambda^\sigma_{\mathbf A}}(x)=U_\alpha^{\lambda^+-\beta^\alpha_{D^c}\lambda^+}(x)+f(x)=
W_{g^\alpha_D,f}^{\lambda^+}(x)\quad\text{for all \ }x\in D.\end{equation*}

For the sake of simplicity, in the next assertion we assume that in the case $\alpha=2$, $D$ is simply connected.

\begin{lemma}\label{desc-riesz} If\/ $\lambda^\sigma_{\mathbf A}=\lambda^+-\lambda^-$ solves Problem\/~{\rm\ref{pr}}, then
\begin{equation}\label{lemma-desc-riesz}
 S^{\lambda^-}_{\mathbb R^n}=\left\{
\begin{array}{rll} D^c & \mbox{if} & \alpha<2,\\ \partial D  & \mbox{if} & \alpha=2.\\ \end{array} \right.
\end{equation}
\end{lemma}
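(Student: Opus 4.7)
The plan is to invoke the representation $\lambda^-=\beta^\alpha_{D^c}\lambda^+$ from Lemma~\ref{lemma-aux''}, together with the hypothesis that $D^c$ is not $\alpha$-thin at~$\omega$, so that Theorem~\ref{th-bala} guarantees $\lambda^-(\mathbb R^n)=1$. The inclusion $S^{\lambda^-}_{\mathbb R^n}\subseteq D^c$ is immediate from the support property of balayage, so only the reverse inclusion requires work, and it splits cleanly into the two cases of the statement.

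For $\alpha<2$, I would use the integral representation~(\ref{repr}) together with the classical fact (see Landkof~\cite{L}, Ch.~IV) that for each $y\notin D^c$ the balayage $\beta^\alpha_{D^c}\varepsilon_y$ is absolutely continuous with respect to Lebesgue measure on $\mathrm{Int}\,D^c$ with a strictly positive ($\alpha$-Poisson) density. Integrating in~$y$ against the probability measure $\lambda^+$ produces a strictly positive density for $\lambda^-$ on all of $\mathrm{Int}\,D^c$, so $\mathrm{Int}\,D^c\subseteq S^{\lambda^-}_{\mathbb R^n}$. Under the standing hypothesis $\partial D=\partial(\mathrm{Int}\,D^c)$ one checks $D^c=\overline{\mathrm{Int}\,D^c}$, and closedness of the support then yields $D^c\subseteq S^{\lambda^-}_{\mathbb R^n}$.

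For $\alpha=2$, the inclusion $S^{\lambda^-}_{\mathbb R^n}\subseteq\partial D$ is a consequence of the classical fact that Newton balayage is boundary-supported: for every $y\in D$ the measure $\beta^2_{D^c}\varepsilon_y$ is the harmonic measure of $D$ at~$y$, which for the simply connected $D$ is carried by $\partial D$, and (\ref{repr}) transfers this to~$\lambda^-$. For the reverse inclusion I would argue by contradiction. Assume $x_0\in\partial D\setminus S^{\lambda^-}_{\mathbb R^n}$ and pick $\varepsilon>0$ small enough that $B:=B(x_0,\varepsilon)$ satisfies $\lambda^-(B)=0$ and $B\cap F=\varnothing$ (possible since $F$ is relatively closed in~$D$). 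Then both $U^{\lambda^+}_2$ and $U^{\lambda^-}_2$ are harmonic in~$B$, and they coincide n.e.\ on $B\cap D^c$ by the defining property of balayage. Continuity of harmonic functions on the nonempty open set $B\cap\mathrm{Int}\,D^c$ (nonempty because $x_0\in\partial(\mathrm{Int}\,D^c)$), combined with the fact that $2$-polar sets have empty interior, upgrades this to pointwise agreement on $B\cap\mathrm{Int}\,D^c$; the identity principle for harmonic functions in the connected ball~$B$ then propagates the equality to all of~$B$, and in particular to the nonempty open set $B\cap D$.

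To close the contradiction I would apply the minimum principle to $v:=U^{\lambda^+}_2-U^{\lambda^-}_2$: it is nonnegative on~$\mathbb R^n$ (a standard property of Newtonian balayage of positive measures), superharmonic in~$D$ with Riesz measure $\lambda^+$ (since $U^{\lambda^-}_2$ is harmonic on~$D$ because $\lambda^-$ lives outside~$D$), and vanishes on the open set $B\cap D\subseteq D$. The minimum principle for superharmonic functions on the connected domain~$D$ (connectedness from simple connectedness) forces $v\equiv 0$ throughout~$D$, whence $\lambda^+=0$ by uniqueness of the Riesz decomposition, contradicting $\lambda^+(F)=1$. The main obstacle is this last step: one must treat the capacity-zero exceptional sets in the balayage equality with some care and apply the identity principle to a superharmonic rather than a harmonic function; for $\alpha<2$ the corresponding hurdle is simply invoking positivity of the $\alpha$-Poisson kernel on $\mathrm{Int}\,D^c$.
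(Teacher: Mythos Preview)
Your overall strategy coincides with the paper's: invoke the representation $\lambda^-=\beta^\alpha_{D^c}\lambda^+$ from Lemma~\ref{lemma-aux''} and then read off the support from the known description of supports of $\alpha$-Riesz swept measures. The paper simply cites that description; you essentially reprove it, which is fine.

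There is, however, a genuine gap in your $\alpha=2$ argument. You write that one may choose $\varepsilon>0$ so that $B(x_0,\varepsilon)\cap F=\varnothing$, ``possible since $F$ is relatively closed in~$D$''. That justification fails: $x_0$ lies on $\partial D$, not in~$D$, and in this paper the central situation is precisely $\overline{F}\cap\partial D\ne\varnothing$, so $F$ may accumulate at~$x_0$ and no such ball need exist. Without $B\cap F=\varnothing$ you lose harmonicity of $U_2^{\lambda^+}$ in~$B$, and your appeal to the identity principle for harmonic functions on~$B$ collapses.

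The repair is simple and stays within your own framework: drop the requirement $B\cap F=\varnothing$ altogether. Since $\lambda^+$ is carried by $F\subset D$, the potential $U_2^{\lambda^+}$ is harmonic on $\mathrm{Int}\,D^c$, and since $\lambda^-(B)=0$, $U_2^{\lambda^-}$ is harmonic on~$B$; hence $v=U_2^{\lambda^+}-U_2^{\lambda^-}$ is harmonic on $B\cap\mathrm{Int}\,D^c$ and vanishes there as you argued. Now $v$ is \emph{superharmonic} (not harmonic) on the connected ball~$B$ and nonnegative, so the minimum principle for superharmonic functions already forces $v\equiv0$ on~$B$, with no need for the identity principle. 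From $v\equiv0$ on $B\cap D$ your final minimum-principle step on the simply connected domain~$D$ then goes through unchanged.
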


Indeed, Lemma~\ref{desc-riesz} follows from (\ref{rel-bala}) and the description of the supports of the $\alpha$-Riesz balayage measures.

\subsection{Variational inequalities in the constrained $\alpha$-Riesz minimum energy problems}\label{sec-var1-Riesz}
In this section, consider $\xi\in\mathcal A(F)$ and assume $\xi(F_0)>1$, where $F_0$ was given by~(\ref{f0}). Combining what has been noticed just above with  the assertions of
Section~\ref{sec-var1} (for $\lambda^+$ instead of~$\lambda$ or~$\lambda_0$) results in the following Theorem~\ref{th-main'''} and Corollaries~\ref{cor-02'} and~\ref{cor-0'}.

\begin{theorem}\label{th-main'''}Let\/ Case\/~{\rm I} take place. Then\/ $\lambda^\xi_{\mathbf A}=\lambda^+-\lambda^-\in\mathcal E_{\alpha,f}^\xi(\mathbf A,\mathbf 1)$
is the\/ {\rm(}unique\/{\rm)} solution to Problem\/~{\rm\ref{pr}} if and only if\/~{\rm(\ref{rel-bala})} holds and, in addition, there exists\/ $w_{\lambda^\xi_{\mathbf
A}}\in\mathbb R$ possessing the following two properties:
\begin{align*}W^{\lambda^\xi_{\mathbf A}}_{\alpha,f}(x)&\geqslant w_{\lambda^\xi_{\mathbf A}}\quad(\xi-\lambda^+)\mbox{-a.e.~in\ }F,\\
W^{\lambda^\xi_{\mathbf A}}_{\alpha,f}(x)&\leqslant w_{\lambda^\xi_{\mathbf A}}\quad\mbox{for all \ }x\in S^{\lambda^+}_{D}.\end{align*}
\end{theorem}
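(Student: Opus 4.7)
The plan is to derive Theorem~\ref{th-main'''} as a direct consequence of Lemma~\ref{lemma-aux''} (the equivalence of Problems~\ref{pr} and~\ref{pr2} when $D^c$ is not $\alpha$-thin at $\omega$) combined with the characterization already obtained in Theorem~\ref{th-main''} for Problem~\ref{pr2}. The crucial bridge is the pointwise identity
\[
W^{\lambda^\xi_{\mathbf A}}_{\alpha,f}(x) = U_\alpha^{\lambda^+-\beta^\alpha_{D^c}\lambda^+}(x) + f(x) = W^{\lambda^+}_{g^\alpha_D,f}(x) \quad\text{for all \ } x\in D,
\]
which follows from~(\ref{UG}) applied to $\lambda^+$ and which is recorded at the beginning of Section~\ref{sec-main-var-riesz}. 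This identity lets one trade variational inequalities for the $\alpha$-Riesz weighted potential on $D$ for variational inequalities for the $\alpha$-Green weighted potential, and vice versa, with the same constant~$w$.

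For the necessity direction, I would start from a solution $\lambda^\xi_{\mathbf A} = \lambda^+-\lambda^-$ to Problem~\ref{pr}. By Lemma~\ref{lemma-aux''}, the positive part $\lambda^+$ must solve Problem~\ref{pr2} with the same constraint $\xi$, and the balayage relation $\lambda^-=\beta^\alpha_{D^c}\lambda^+$ holds, which is~(\ref{rel-bala}). Since we are in Case~I, Theorem~\ref{th-main''} applies to $\lambda^+\in\mathcal E_{g^\alpha_D,f}^\xi(F,1)$ and produces a constant $w_{\lambda^+}\in\mathbb R$ satisfying~(\ref{b1}) and~(\ref{b2}). Since the measure $\xi-\lambda^+$ is supported in $F\subset D$, and $S^{\lambda^+}_D\subset D$, the pointwise identity above converts these inequalities into exactly
\[
W^{\lambda^\xi_{\mathbf A}}_{\alpha,f}(x)\geqslant w_{\lambda^+}\ \ (\xi-\lambda^+)\text{-a.e.~in }F, \qquad W^{\lambda^\xi_{\mathbf A}}_{\alpha,f}(x)\leqslant w_{\lambda^+}\ \ \text{on }S^{\lambda^+}_D,
\]
giving the two desired inequalities with $w_{\lambda^\xi_{\mathbf A}}:=w_{\lambda^+}$.

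For the sufficiency direction, suppose $\lambda^\xi_{\mathbf A}=\lambda^+-\lambda^-\in\mathcal E^\xi_{\alpha,f}(\mathbf A,\mathbf 1)$ satisfies~(\ref{rel-bala}) together with the two stated variational inequalities for some constant $w_{\lambda^\xi_{\mathbf A}}$. Using the identity for $W^{\lambda^\xi_{\mathbf A}}_{\alpha,f}$ on $D$ in the reverse direction, these are equivalent to the inequalities~(\ref{b1}) and~(\ref{b2}) for $\lambda^+\in\mathcal E_{g^\alpha_D,f}^\xi(F,1)$ with the same constant. By the sufficiency part of Theorem~\ref{th-main''}, $\lambda^+$ solves Problem~\ref{pr2}; then Lemma~\ref{lemma-aux''} produces a solution to Problem~\ref{pr} given by $\lambda^+-\beta^\alpha_{D^c}\lambda^+$, which by~(\ref{rel-bala}) coincides with $\lambda^\xi_{\mathbf A}$. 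Uniqueness follows at once from Lemma~\ref{uniqueness}.

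The main potential pitfall is administrative rather than conceptual: one must ensure that $\lambda^+\in\mathcal E_{g^\alpha_D,f}^\xi(F,1)$ whenever $\lambda^\xi_{\mathbf A}\in\mathcal E^\xi_{\alpha,f}(\mathbf A,\mathbf 1)$, so that Theorem~\ref{th-main''} is indeed applicable. This is immediate from~(\ref{EG}), the finiteness $G_{\alpha,f}(\lambda^\xi_{\mathbf A})<\infty$, and the constraint $\lambda^+\leqslant\xi$, which together with~$\xi\in\mathcal A(F)$ give $\lambda^+\in\mathcal E^+_{g^\alpha_D}(F,1)$ and $\langle f,\lambda^+\rangle>-\infty$ in Case~I.
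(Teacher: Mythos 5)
Your proof is correct and takes essentially the same approach the paper intends: the paper derives Theorem~\ref{th-main'''} by ``combining what has been noticed just above'' (namely, the identity $W^{\lambda^\sigma_{\mathbf A}}_{\alpha,f}=W^{\lambda^+}_{g^\alpha_D,f}$ on $D$, a consequence of~(\ref{UG}) and~(\ref{rel-bala})) with Lemma~\ref{lemma-aux''} and the assertions of Section~\ref{sec-var1} (Theorem~\ref{th-main''}), exactly as you do. Your write-up merely makes explicit the details the paper leaves as a remark, including the minor point that $\lambda^+\in\mathcal E^\xi_{g^\alpha_D,f}(F,1)$, which is already recorded in the proof of Lemma~\ref{lemma-aux''}.
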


\begin{corollary}\label{cor-02'} Assume that\/ $f\bigl|_D=U^\chi_{g^\alpha_D}$ for some\/ $\chi\in\mathfrak M^+(D)$. If\/ $\lambda^\xi_{\mathbf A}=\lambda^+-\lambda^-$ solves
Problem\/~{\rm\ref{pr}}, then\/ $C_\alpha\bigl(\partial D\cap S^{\xi-\lambda^+}_{\mathbb R^n}\bigr)=0$.
\end{corollary}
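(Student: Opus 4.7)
The plan is to reduce the statement to its $\alpha$-Green counterpart, Corollary~\ref{cor-02}, via the bridge provided by Lemma~\ref{lemma-aux''}. Recall that throughout Section~\ref{sec-main-var-riesz} we are assuming $D^c$ is not $\alpha$-thin at $\omega$, so the hypothesis of Lemma~\ref{lemma-aux''} is automatically in force. Also, $\xi \in \mathcal A(F)$ with $\xi(F_0)>1$ is the standing assumption of Section~\ref{sec-var1-Riesz}, matching the hypothesis on the constraint needed to invoke Corollary~\ref{cor-02}.

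First, I would apply Lemma~\ref{lemma-aux''} with $\sigma=\xi$: since $\lambda^\xi_{\mathbf A}=\lambda^+-\lambda^-$ is assumed to solve Problem~\ref{pr}, the lemma (together with the uniqueness statement of Lemma~\ref{uniqueness}) tells us that the positive part $\lambda^+$ is precisely the unique minimizer $\lambda^\xi_F$ of Problem~\ref{pr2} in $\mathcal E^{\xi}_{g^\alpha_D,f}(F,1)$; the negative part satisfies $\lambda^- = \beta^\alpha_{D^c}\lambda^+$, but this is not needed here.

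Second, I would verify the hypothesis of Corollary~\ref{cor-02}. Since $\chi \in \mathfrak M^+(D)$, the potential $U^\chi_{g^\alpha_D}$ is lower semicontinuous and nonnegative on $D$; together with $f(x)=0$ n.e.~on $D^c$ (a convention embedded in the Riesz–Green framework of the paper through Cases~I/II), this places us in the Case~I setting under which Theorem~\ref{th-main''} and hence Corollary~\ref{cor-02} were proved. Applying Corollary~\ref{cor-02} to the minimizer $\lambda=\lambda^+$ of Problem~\ref{pr2} yields
\[
C_\alpha\bigl(\partial D \cap S^{\xi-\lambda^+}_{\mathbb R^n}\bigr)=0,
\]
which is exactly the conclusion of Corollary~\ref{cor-02'}.

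There is essentially no technical obstacle to overcome: the whole point of Section~\ref{sec-var1-Riesz} is that the correspondence $\lambda^\sigma_{\mathbf A}\leftrightarrow \lambda^+$ supplied by Lemma~\ref{lemma-aux''} lets us transfer every statement about supports, potentials, and variational inequalities from the $\alpha$-Green setting on $F$ to the $\alpha$-Riesz condenser setting on $\mathbf A$. The only subtlety worth flagging is to make sure that $\partial D\cap S^{\xi-\lambda^+}_{\mathbb R^n}$ is really the same object on both sides of the reduction---it is, because $\xi$ and $\lambda^+$ are both measures supported in $F\subset D$, so the difference $\xi-\lambda^+$ has the same support whether viewed as a measure on $D$ or on $\mathbb R^n$.
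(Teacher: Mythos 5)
Your proof is correct and takes essentially the same route as the paper: the paper derives Corollary~\ref{cor-02'} by exactly this reduction, using Lemma~\ref{lemma-aux''} to identify $\lambda^+$ as the unique solution of Problem~\ref{pr2} with the same constraint and then applying Corollary~\ref{cor-02} to $\lambda^+$.
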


\begin{corollary}\label{cor-0'} Let\/ $f=0$. A measure\/ $\lambda^\xi_{\mathbf A}=\lambda^+-\lambda^-\in\mathcal E_{\alpha,f}^\xi(\mathbf A,\mathbf 1)$
solves Problem\/~{\rm\ref{pr}} if and only if\/ {\rm(\ref{rel-bala})} holds and, in addition, there exists a\/ {\rm(}unique\/{\rm)} number\/ $w'_{\lambda^\xi_{\mathbf
A}}\in(0,\infty)$ such that
\begin{align*}U^{\lambda^\xi_{\mathbf A}}_{\alpha}(x)&=w'_{\lambda^\xi_{\mathbf A}}\quad(\xi-\lambda^+)\mbox{-a.e.~in\ }F,\\
U^{\lambda^\xi_{\mathbf A}}_{\alpha}(x)&\leqslant w'_{\lambda^\xi_{\mathbf A}}\quad\mbox{for all \ }x\in D.\end{align*}
Furthermore, if\/ $\alpha<2$, then also\/ $S^{\lambda^+}_D=F$ and\/ $S^{\lambda^-}_{\mathbb R^n}=D^c$.
\end{corollary}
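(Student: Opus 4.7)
The plan is to derive Corollary~\ref{cor-0'} directly from the corresponding $\alpha$-Green result, Corollary~\ref{cor-0}, by exploiting the bijection between solutions of Problems~\ref{pr} and~\ref{pr2} furnished by Lemma~\ref{lemma-aux''}, and then translating $\alpha$-Green potentials into $\alpha$-Riesz potentials via identity~(\ref{UG}). In the ``further\-more'' part, the description of $S^{\lambda^+}_D$ is read off from~(\ref{identity}) in Corollary~\ref{cor-0}, while that of $S^{\lambda^-}_{\mathbb R^n}$ comes from Lemma~\ref{desc-riesz}.

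First, since $D^c$ is assumed not $\alpha$-thin at~$\omega$, Lemma~\ref{lemma-aux''} (applied with $\sigma=\xi$ and $f=0$) yields that $\lambda^\xi_{\mathbf A}=\lambda^+-\lambda^-\in\mathcal E_\alpha^\xi(\mathbf A,\mathbf 1)$ solves Problem~\ref{pr} if and only if $\lambda^+\in\mathcal E_{g^\alpha_D}^\xi(F,1)$ solves Problem~\ref{pr2} and $\lambda^-=\beta^\alpha_{D^c}\lambda^+$, i.e.~(\ref{rel-bala}) holds. Applying Corollary~\ref{cor-0} to $\lambda_0:=\lambda^+$, this further translates into the existence of a unique $w'\in(0,\infty)$ with $U^{\lambda^+}_{g^\alpha_D}(x)=w'$ $(\xi-\lambda^+)$-a.e.~in $F$ and $U^{\lambda^+}_{g^\alpha_D}(x)\leqslant w'$ for all $x\in D$.

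Next, I would translate these $\alpha$-Green variational inequalities into their $\alpha$-Riesz counterparts. For $x\in D$, identity~(\ref{UG}) applied to $\lambda^+$, combined with~(\ref{rel-bala}), gives
\begin{equation*}
U^{\lambda^+}_{g^\alpha_D}(x)=U^{\lambda^+}_\alpha(x)-U^{\beta^\alpha_{D^c}\lambda^+}_\alpha(x)=U^{\lambda^+}_\alpha(x)-U^{\lambda^-}_\alpha(x)=U^{\lambda^\xi_{\mathbf A}}_\alpha(x).
\end{equation*}
Since $F\subset D$ and $\xi-\lambda^+$ is a nonnegative measure carried by $F$, this pointwise identity on $D$ allows one to replace $U^{\lambda^+}_{g^\alpha_D}$ by $U^{\lambda^\xi_{\mathbf A}}_\alpha$ in both the ``$(\xi-\lambda^+)$-a.e.\ in $F$'' equality and the ``for all $x\in D$'' inequality, producing the two stated conditions with $w'_{\lambda^\xi_{\mathbf A}}:=w'$. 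The uniqueness of $w'_{\lambda^\xi_{\mathbf A}}$ is inherited from that of the corresponding number in Corollary~\ref{cor-0}.

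Finally, assuming $\alpha<2$, the relation $S^{\lambda^+}_D=F$ is exactly~(\ref{identity}) applied to~$\lambda^+$, while $S^{\lambda^-}_{\mathbb R^n}=D^c$ follows from Lemma~\ref{desc-riesz}, since $\lambda^-=\beta^\alpha_{D^c}\lambda^+$ is the $\alpha$-Riesz balayage onto~$D^c$ of the nonzero measure~$\lambda^+$. The only place requiring genuine care is checking that~(\ref{UG}) really applies on all of~$D$ (not just n.e.), but this is automatic because $\lambda^+\in\mathcal E_\alpha^+(\mathbb R^n)$ has finite $\alpha$-Riesz energy and $\lambda^-=\beta^\alpha_{D^c}\lambda^+$ is its orthogonal projection onto~$\mathcal E_\alpha^+(D^c)$, so all potentials involved are well-defined finite or $+\infty$ values whose difference on~$D$ coincides with the $\alpha$-Green potential of~$\lambda^+$.
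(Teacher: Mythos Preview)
Your proof is correct and follows essentially the same approach as the paper, which derives Corollary~\ref{cor-0'} by combining Lemma~\ref{lemma-aux''} (and the resulting identity $W_{\alpha,f}^{\lambda^\xi_{\mathbf A}}=W_{g^\alpha_D,f}^{\lambda^+}$ on~$D$) with Corollary~\ref{cor-0} applied to~$\lambda^+$, and then invokes~(\ref{identity}) and Lemma~\ref{desc-riesz} for the support statements when $\alpha<2$.
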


In the notations of Corollary~\ref{cor-0} with $\lambda^+$ in place of $\lambda_0$,
\[w'_{\lambda^\xi_{\mathbf A}}=\frac{E_{\alpha}(\lambda^\xi_{\mathbf
A},\xi-\lambda^+)}{(\xi-\lambda^+)(D)}=\frac{E_{g^\alpha_D}(\lambda^+,\xi-\lambda^+)}{(\xi-\lambda^+)(D)}=w'_{\lambda^+}.\]

\subsection{Variational inequalities in the unconstrained $\alpha$-Riesz minimum energy problems}\label{sec-var2-riesz}
In this section, $\sigma=\infty$. Similarly as it has been done just above, we derive the following corollaries from the assertions of Section~\ref{sec-var2}.

\begin{corollary}\label{cor-infty-1}Assume Case\/~{\rm I} takes place. A measure\/ $\lambda_{\mathbf A}=\lambda^+-\lambda^-\in\mathcal E_{\alpha,f}(\mathbf A,\mathbf 1)$ solves
Problem\/~{\rm\ref{pr}} if and only if\/
{\rm(\ref{rel-bala})} holds and, in addition, there exists a\/ {\rm(}unique\/{\rm)} number\/ $w_f'\in\mathbb R$ possessing the properties
\begin{align*}W^{\lambda_{\mathbf A}}_{\alpha,f}(x)&\geqslant w_f'\quad\text{n.e.~in\ }F,\\
W^{\lambda_{\mathbf A}}_{\alpha,f}(x)&\leqslant w_f'\quad\text{for all \ }x\in S^{\lambda^+}_D.\end{align*}
Furthermore, then\/ $w_f'=w_f$, where\/ $w_f$ is the number from Theorem\/~{\rm\ref{th-infty}}, and  assertions\/~{\rm(a)} and\/~{\rm(b)} of Corollary\/~{\rm\ref{cor-infty}}
both hold.\end{corollary}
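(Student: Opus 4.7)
The plan is to derive Corollary~\ref{cor-infty-1} as a direct translation of Theorem~\ref{th-infty} through the correspondence given by Lemma~\ref{lemma-aux''}. The crucial fact, already recorded in the paragraph preceding Lemma~\ref{desc-riesz}, is the pointwise identity
\[ W^{\lambda_{\mathbf A}}_{\alpha,f}(x) = W^{\lambda^+}_{g^\alpha_D,f}(x) \quad\text{for all } x\in D, \]
valid whenever $\lambda^-=\beta^\alpha_{D^c}\lambda^+$; it follows from~(\ref{UG}) since then $U_\alpha^{\lambda^+-\lambda^-}(x)=U_\alpha^{\lambda^+}(x)-U_\alpha^{\beta^\alpha_{D^c}\lambda^+}(x)=U_{g^\alpha_D}^{\lambda^+}(x)$ on $D$. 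Together with $F\subset D$ and $S^{\lambda^+}_D\subset D$, this identity transports the variational inequalities of Theorem~\ref{th-infty} between the $\alpha$-Riesz and $\alpha$-Green settings.

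For the forward implication, assume $\lambda_{\mathbf A}=\lambda^+-\lambda^-$ solves Problem~\ref{pr}. Since $D^c$ is not $\alpha$-thin at~$\omega$, Lemma~\ref{lemma-aux''} applies and gives at once that $\lambda^+$ solves Problem~\ref{pr2} (with $\sigma=\infty$) and that $\lambda^-=\beta^\alpha_{D^c}\lambda^+$, i.e.~(\ref{rel-bala}) holds. Because Case~I is in force, Theorem~\ref{th-infty} provides a unique $w_f\in\mathbb R$ with $W^{\lambda^+}_{g^\alpha_D,f}\geqslant w_f$ n.e.~in~$F$ and $W^{\lambda^+}_{g^\alpha_D,f}\leqslant w_f$ on $S^{\lambda^+}_D$. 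Applying the displayed identity on $D$ converts these into the required inequalities for $W^{\lambda_{\mathbf A}}_{\alpha,f}$ with $w_f':=w_f$; here one uses Lemma~\ref{lemma:equiv} to note that the ``n.e.~in~$F$'' statement is the same whether measured with respect to $\alpha$-Riesz or $\alpha$-Green capacity.

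For the converse, suppose (\ref{rel-bala}) holds and $w_f'\in\mathbb R$ satisfies the two inequalities for $W^{\lambda_{\mathbf A}}_{\alpha,f}$ in the statement. From $\lambda_{\mathbf A}\in\mathcal E_{\alpha,f}(\mathbf A,\mathbf 1)$ and (\ref{EG}), (\ref{alphafinite}), (\ref{wen}) one sees that $\lambda^+\in\mathcal E^+_{g^\alpha_D,f}(F,1)$. Using the identity on $D$ once more, the hypothesized inequalities translate into $W^{\lambda^+}_{g^\alpha_D,f}\geqslant w_f'$ n.e.~in~$F$ and $W^{\lambda^+}_{g^\alpha_D,f}\leqslant w_f'$ on $S^{\lambda^+}_D$, so the sufficiency half of Theorem~\ref{th-infty} yields that $\lambda^+$ solves Problem~\ref{pr2}. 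Lemma~\ref{lemma-aux''} then shows that $\lambda_{\mathbf A}=\lambda^+-\beta^\alpha_{D^c}\lambda^+$ solves Problem~\ref{pr}.

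Uniqueness of $w_f'$ and the equality $w_f'=w_f$ are inherited from the corresponding uniqueness clause of Theorem~\ref{th-infty}; and assertions~(a),~(b) of Corollary~\ref{cor-infty} apply verbatim to $\lambda^+$, which by the above solves Problem~\ref{pr2}, so they also hold in the present setting. No genuine obstacle is anticipated: the entire argument is a bookkeeping exercise built on the reduction in Lemma~\ref{lemma-aux''} and the potential-identity on~$D$; the only point demanding a moment's care is the admissibility check $\lambda^+\in\mathcal E^+_{g^\alpha_D,f}(F,1)$, which is immediate from the formulas (\ref{EG}) and~(\ref{wen}).
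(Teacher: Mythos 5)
Your proposal is correct and takes essentially the same route the paper intends: reduce Problem~\ref{pr} to Problem~\ref{pr2} via Lemma~\ref{lemma-aux''}, transport the variational inequalities of Theorem~\ref{th-infty} using the identity $W^{\lambda_{\mathbf A}}_{\alpha,f}=W^{\lambda^+}_{g^\alpha_D,f}$ on $D$ (valid once~(\ref{rel-bala}) holds), and inherit the uniqueness of $w_f$ and assertions~(a),~(b) of Corollary~\ref{cor-infty} from the $\alpha$-Green setting. The paper merely states this derivation in one sentence; your write-up fills in exactly the bookkeeping steps (including the observation that Lemma~\ref{lemma:equiv} makes the two notions of ``n.e.'' interchangeable) that the paper leaves implicit.
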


For the sake of simplicity, in the following assertion we assume that in the case $\alpha=2$, $D\setminus F$ is simply connected.

\begin{corollary}\label{cor-infty-2}Let\/ $f=0$. A measure\/ $\lambda_{\mathbf A}=\lambda^+-\lambda^-\in\mathcal E_\alpha(\mathbf A,\mathbf 1)$ solves Problem\/~{\rm\ref{pr}} if
and only if\/
{\rm(\ref{rel-bala})} holds and there exists a\/ {\rm(}unique\/{\rm)} number\/ $w'\in(0,\infty)$ such that
\begin{align*}U^{\lambda_{\mathbf A}}_{\alpha}(x)&=w'\quad\mbox{n.e.~in \ }F,\\
U^{\lambda_{\mathbf A}}_{\alpha}(x)&\leqslant w'\quad\mbox{for all \ }x\in D,\\
U^{\lambda_{\mathbf A}}_{\alpha}(x)&< w'\quad\mbox{for all \ }x\in D\setminus\breve{F}.\end{align*}
Furthermore, then\/ $w'=w$, where\/ $w$ is the number from Corollary\/~{\rm\ref{cor-0'-infty}}, i.e.
\[w'=E_\alpha(\lambda_{\mathbf A},\lambda^+)=E_\alpha(\lambda_{\mathbf A})=E_{g^\alpha_D}(\lambda^+)=w_{g^\alpha_D}(F)=\bigl[C{g^\alpha_D}(F)\bigr]^{-1}=E_\alpha(\mathbf
A,\mathbf 1).\]
The descriptions of\/ $S^{\lambda^+}_D$ and\/ $S^{\lambda^-}_{\mathbb R^n}$ are given by~{\rm(\ref{desc-sup})} for\/~$\lambda^+$ in place of\/~$\lambda_F$ and\/
{\rm(\ref{lemma-desc-riesz})}, respectively.
\end{corollary}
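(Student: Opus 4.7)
The plan is to \emph{transport} the unconstrained, non-weighted $\alpha$-Green characterization obtained in Corollary~\ref{cor-0'-infty} and Corollary~\ref{cor-sup} to the $\alpha$-Riesz condenser setting by using the reduction scheme of Lemma~\ref{lemma-aux''}. Since $D^c$ is assumed not $\alpha$-thin at~$\omega$, that lemma applied with $\sigma=\infty$ and $f=0$ states that $\lambda_{\mathbf A}=\lambda^+-\lambda^-\in\mathcal E_\alpha(\mathbf A,\mathbf 1)$ solves Problem~\ref{pr} if and only if $\lambda^+\in\mathcal E^+_{g^\alpha_D}(F,1)$ solves the non-weighted Problem~\ref{pr2} and, simultaneously, $\lambda^-=\beta^\alpha_{D^c}\lambda^+$, which is precisely~(\ref{rel-bala}). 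So the task reduces to converting the three potential conditions for $\lambda^+$ with respect to $g^\alpha_D$ into the claimed three conditions for $\lambda_{\mathbf A}$ with respect to $\kappa_\alpha$.

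To that end, combining the representation~(\ref{UG}) with $\mu=\lambda^+$ and linearity of the Riesz potential gives
\[U^{\lambda^+}_{g^\alpha_D}(x)=U^{\lambda^+}_\alpha(x)-U^{\beta^\alpha_{D^c}\lambda^+}_\alpha(x)=U^{\lambda_{\mathbf A}}_\alpha(x)\quad\text{for every }x\in D.\]
Under this pointwise identity on~$D$, the three inequalities asserted for $U^{\lambda_{\mathbf A}}_\alpha$ correspond \emph{exactly} to the conclusions of Corollary~\ref{cor-0'-infty} (equality n.e.\ on~$F$ and the inequality $\leqslant w$ on~$D$) and of Corollary~\ref{cor-sup} (strict inequality on $D\setminus\breve F$; the hypothesis that $D\setminus F$ be simply connected when $\alpha=2$ yields the connectedness needed to invoke the latter). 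Conversely, if (\ref{rel-bala}) holds and the three Riesz-potential inequalities are satisfied with some $w'$, they transfer back through the same identity into the characterizing conditions for $\lambda^+$ to solve Problem~\ref{pr2}, so by Corollary~\ref{cor-0'-infty} the measure $\lambda^+$ is the minimizer and $w'=w$ is unique. The support descriptions are then immediate: $S^{\lambda^+}_D$ from~(\ref{desc-sup}) applied to $\lambda^+$, and $S^{\lambda^-}_{\mathbb R^n}$ from Lemma~\ref{desc-riesz}.

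It remains to establish the chain of numerical equalities claimed for~$w'$. Corollary~\ref{cor-0'-infty} already gives $w=E_{g^\alpha_D}(\lambda^+)=w_{g^\alpha_D}(F)=[C_{g^\alpha_D}(F)]^{-1}$. Formula~(\ref{EG}) applied to $\mu=\lambda^+$ yields $E_\alpha(\lambda_{\mathbf A})=E_\alpha(\lambda^+-\beta^\alpha_{D^c}\lambda^+)=E_{g^\alpha_D}(\lambda^+)$, and integrating the pointwise identity above against~$\lambda^+$ (which is supported in $F\subset D$) produces $E_\alpha(\lambda_{\mathbf A},\lambda^+)=E_{g^\alpha_D}(\lambda^+)$; finally $E_\alpha(\mathbf A,\mathbf 1)=w$ follows from~(\ref{equality}) of Lemma~\ref{lemma-aux''} specialized to the present setting. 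The only piece of bookkeeping, and the closest thing to an obstacle, is that the identity $U^{\lambda_{\mathbf A}}_\alpha=U^{\lambda^+}_{g^\alpha_D}$ is valid only inside~$D$; but every statement in the corollary involves points of~$D$, and the phrase ``n.e.\ in~$F$'' is coherent between the two kernels by Lemma~\ref{lemma:equiv}, so this causes no genuine difficulty.
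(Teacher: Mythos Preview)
Your proof is correct and follows essentially the same route as the paper: the paper does not write out a separate proof of this corollary but states that it is derived from the assertions of Section~\ref{sec-var2} via the reduction scheme of Lemma~\ref{lemma-aux''} together with the identity $U^{\lambda_{\mathbf A}}_\alpha=W_{g^\alpha_D,f}^{\lambda^+}$ on~$D$ recorded at the start of Section~\ref{sec-main-var-riesz}. You have simply filled in those details, invoking Corollaries~\ref{cor-0'-infty} and~\ref{cor-sup} for the potential inequalities and support of~$\lambda^+$, Lemma~\ref{desc-riesz} for the support of~$\lambda^-$, and (\ref{EG}) and~(\ref{equality}) for the numerical chain, exactly as intended.
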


\section{Proof of Theorem~\ref{silv-bound}\label{silv-bound-proof}}
Consider an exhaustion of $F$ by an increasing sequence of compact sets $K_k$, $k\in\mathbb N$. Since the constraint~$\xi$ is bounded, it holds
\begin{equation}\label{bbb}\lim_{k\to\infty}\,\xi(F\setminus K_k)=0.\end{equation}

Because of assumption~(\ref{green}), there exists $\{\mu_\ell\}_{\ell\in\mathbb N}\subset\mathcal E_{g^\alpha_D,f}^\xi(F,1)$ such that
\begin{equation}\label{bbb1}\lim_{\ell\to\infty}\,G_{g^\alpha_D,f}(\mu_\ell)=G_{g^\alpha_D,f}^\xi(F,1).\end{equation}
This sequence $\{\mu_\ell\}_{\ell\in\mathbb N}$ is vaguely bounded; hence, by~\cite[Chapter~III, Section~2, Prop.~9]{B2}, it has a
vague cluster point~$\mu_0$. We assert that, in~Case~I, $\mu_0$ is a solution to Problem~\ref{pr2}.

Since $\mathfrak M^+(F)$ is a vaguely closed subset of~$\mathfrak M^+(D)$, we get $\mu_0\in\mathfrak M^+(F)$ and $\mu_0\leqslant\xi$.
Let, further, $\{\mu_{\ell_m}\}_{m\in\mathbb N}$ be a subsequence of $\{\mu_\ell\}_{\ell\in\mathbb N}$ converging vaguely to~$\mu_0$. Then
\begin{align*}1=\lim_{m\to\infty}\,\mu_{\ell_m}(F)\geqslant\mu_0(F)&=\lim_{k\to\infty}\,\mu_0(K_k)\\
&{}\geqslant\lim_{k\to\infty}\,\limsup_{m\to\infty}\,\mu_{\ell_m}(K_k)=1-\lim_{k\to\infty}\,\liminf_{m\to\infty}\,\mu_{\ell_m}(F\setminus K_k).\end{align*}
On account of the fact that $\mu_{\ell_m}(F\setminus K_k)\leqslant\xi(F\setminus K_k)$ for all $m,\,k\in\mathbb N$, combining the preceding chain of inequalities
with~(\ref{bbb}) yields $\mu_0(F)=1$.

To complete the proof, it thus remains to observe that $G_{g^\alpha_D,f}(\mu_0)\leqslant G_{g^\alpha_D,f}^\xi(F,1)$, which is seen from~(\ref{bbb1}) in view of the lower
semicontinuity of~$G_{g^\alpha_D,f}$ on~$\mathcal E^+_{g^\alpha_D}(D)$ (see~Lemma~\ref{lemma-cont}).\qed

\section{Proof of Theorem~\ref{infcap}\label{infcap-proof}}
Under the assumptions of the theorem, Case~II with $\zeta\geqslant0$ takes place, and therefore
\begin{equation}\label{strpos}G_{g^\alpha_D,f}(\nu)=\|\nu\|^2_{g^\alpha_D}+2E_{g^\alpha_D}(\zeta,\nu)\geqslant E_{g^\alpha_D}(\nu)\geqslant0\quad\text{for all \ }\nu\in\mathcal
E^+_{g^\alpha_D}(D).\end{equation}

Consider an exhaustion of $F$ by an increasing sequence of compact sets $K_k$, $k\in\mathbb N$. Since $C_{g^\alpha_D}(F)=\infty$, it holds $C_{g^\alpha_D}(F\setminus
K_k)=\infty$ for all $k\in\mathbb N$. Hence, for every~$k$ one can choose a measure
$\nu_k\in\mathcal E_{g^\alpha_D}^+(F\setminus K_k,1)$ with compact support so that
\begin{equation}\label{000}\lim_{k\to\infty}\,\|\nu_k\|^2_{g^\alpha_D}=0.\end{equation}
Certainly, there is no loss of generality in assuming $K_k\cup S_D^{\nu_k}\subset K_{k+1}$.

Fix $\xi\in\mathfrak C(F)$ and write $\xi_0:=\xi+\sum_{k\in\mathbb N}\,\nu_k$; then $\xi_0\in\mathfrak C(F)\setminus\mathfrak C_0(F)$. Due to~(\ref{identtt}), for each
$\sigma\in\mathfrak C(F)\cup\{\infty\}$ such that $\sigma\geqslant\xi_0$, it holds
\begin{equation*}\nu_k\in\mathcal E^\sigma_{g^\alpha_D,f}(F,1)\quad\text{for all \ }k\in\mathbb N.\end{equation*}
From (\ref{000}) and the Cauchy--Schwarz inequality we get
\begin{equation*}\label{111''}\lim_{k\to\infty}\,G_{g^\alpha_D,f}(\nu_k)=\lim_{k\to\infty}\,\bigl[\|\nu_k\|^2_{g^\alpha_D}+2E_{g^\alpha_D}(\zeta,\nu_k)\bigr]\leqslant
2\|\zeta\|_{g^\alpha_D}\lim_{k\to\infty}\,\|\nu_k\|_{g^\alpha_D}=0.\end{equation*}
Combined with~(\ref{strpos}), this yields $G^\sigma_{g^\alpha_D,f}(F,1)=0$. Repeated application of~(\ref{strpos}) shows also that such an infimum value can be attained only at
zero measure.
As $0\not\in\mathcal E^\sigma_{g^\alpha_D,f}(F,1)$, Problem~\ref{pr2} with $\sigma$ specified above is unsolvable.\qed

\section{$\alpha$-Green strong completeness theorem}\label{crucial''}

A  crucial point in our proof of Theorem~\ref{lemma-main}, given in Section~\ref{lemma-main-proof}, is the following perfectness-type result for the $\alpha$-Green kernel.

\begin{theorem}\label{alphaeq1} Let\/ $E\subset\mathbb R^n$ be closed in\/~$D$. If\/ $\alpha<2$, require additionally\/ $\overline{E}\cap\partial_{\overline{\mathbb R^n}}D$ to consist of at most one point. Then any strong Cauchy sequence\/ $\{\nu_k\}_{k\in\mathbb N}\subset\mathcal
E^+_{g_D^\alpha}(E)$ with
\begin{equation}\label{bound}\sup_{k\in\mathbb
N}\,\nu_k(E)<\infty\end{equation} converges both strongly and vaguely to
the unique\/ $\nu_0\in\mathcal E^+_{g_D^\alpha}(E)$.
\end{theorem}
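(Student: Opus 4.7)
The plan is to reduce the $\alpha$-Green problem to an $\alpha$-Riesz problem via the isometric formula (\ref{EG}), combine this with vague pre-compactness coming from the mass bound (\ref{bound}), and then use the one-point touching hypothesis (for $\alpha<2$) to rule out escape of mass at the boundary of~$D$.

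\textbf{Step 1 (Reduction).} I would set $\mu_k := \nu_k - \beta^\alpha_{D^c}\nu_k$. Since $\beta^\alpha_{D^c}$ is linear in the sense of (\ref{repr}), identity (\ref{EG}) applied to $\nu_k-\nu_j$ gives
\[
\|\mu_k-\mu_j\|_{\kappa_\alpha}^2 = E_{g^\alpha_D}(\nu_k-\nu_j) = \|\nu_k-\nu_j\|_{g^\alpha_D}^2,
\]
so $\{\mu_k\}$ is strong Cauchy in $\mathcal E_\alpha(\mathbb R^n)$, with $\mu_k^+=\nu_k\in\mathfrak M^+(E)$ and $\mu_k^-=\beta^\alpha_{D^c}\nu_k\in\mathfrak M^+(D^c)$. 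The bound (\ref{bound}) together with $\beta^\alpha_{D^c}\nu_k(\mathbb R^n)\leqslant\nu_k(\mathbb R^n)$ makes both sequences vaguely bounded in $\mathfrak M^+(\mathbb R^n)$. Passing to a subsequence, I may assume $\nu_k\to\nu_0$ and $\beta^\alpha_{D^c}\nu_k\to\tau_0$ vaguely in $\mathfrak M^+(\mathbb R^n)$, with $\nu_0\in\mathfrak M^+(\overline E^{\mathbb R^n})$ and $\tau_0\in\mathfrak M^+(D^c)$.

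\textbf{Step 2 (Identification of $\nu_0$ as a measure on $E$).} For $\alpha=2$, the $2$-Green kernel is already known to be perfect (noted in Section~\ref{sec:bala}), so the conclusion is immediate and no touching condition is needed. For $\alpha<2$, the hypothesis reduces $\overline E\cap\partial_{\overline{\mathbb R^n}} D$ to at most one point $x_0$. Any vague cluster point of $\{\mu_k\}$ inherits finite $\kappa_\alpha$-energy by vague lower semicontinuity of $E_{\kappa_\alpha}$ on $\mathfrak M^+\times\mathfrak M^+$ applied to positive and negative parts separately; in particular, $\nu_0$ has finite $\alpha$-Riesz energy on $\overline E^{\mathbb R^n}$. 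Since singletons in $\mathbb R^n$ have $\alpha$-Riesz capacity zero and measures of finite $\kappa_\alpha$-energy are $C$-absolutely continuous, $\nu_0$ cannot charge $x_0$, forcing $\nu_0\in\mathfrak M^+(E)$. The identification $\tau_0=\beta^\alpha_{D^c}\nu_0$ then follows from uniqueness of balayage among $C$-absolutely continuous measures together with the equality of the $\alpha$-Riesz potentials of $\tau_0$ and $\nu_0$ nearly everywhere on $D^c$, itself obtained by taking vague limits in the defining relation for $\beta^\alpha_{D^c}\nu_k$.

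\textbf{Step 3 (Strong convergence and conclusion).} With $\nu_0\in\mathfrak M^+(E)$ in hand, vague lower semicontinuity of $E_{g^\alpha_D}$ on $\mathfrak M^+(D)$ (which applies because $g^\alpha_D$ is nonnegative and l.s.c.) yields
\[
\|\nu_0\|_{g^\alpha_D}^2 \leqslant \liminf_k\|\nu_k\|_{g^\alpha_D}^2 < \infty,
\]
so $\nu_0\in\mathcal E^+_{g^\alpha_D}(E)$. Setting $\mu_0 := \nu_0-\tau_0 = \nu_0-\beta^\alpha_{D^c}\nu_0 \in\mathcal E_\alpha(\mathbb R^n)$ and applying the strong completeness Theorem~\ref{th-complete} to the closed condenser with plates containing the supports (the capacity-zero overlap at $x_0$ being harmless), I get $\mu_k\to\mu_0$ strongly in $\mathcal E_\alpha(\mathbb R^n)$. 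Translating through (\ref{EG}) once more,
\[
\|\nu_k-\nu_0\|_{g^\alpha_D}^2 = \|\mu_k-\mu_0\|_{\kappa_\alpha}^2 \to 0,
\]
establishing strong convergence. Uniqueness follows from strict positive definiteness of $g^\alpha_D$, itself a consequence of (\ref{EG}) and that of $\kappa_\alpha$.

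\textbf{Main obstacle.} The hard step is Step~2 for $\alpha<2$: verifying that the vague limit $\nu_0$ does not place mass at the touching point $x_0\in\overline E\cap\partial_{\overline{\mathbb R^n}}D$, and that the balayage $\beta^\alpha_{D^c}$ commutes with the vague limit in this setting. The one-point restriction is exactly what allows the capacity-zero argument to close the gap; without it, a positive-capacity touching set on $\partial D$ would permit nontrivial mass to leak out of $E$ in the vague limit, breaking the identification needed to invoke Theorem~\ref{th-complete}.
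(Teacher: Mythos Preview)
Your overall strategy---translate to $\alpha$-Riesz energy via (\ref{EG}) and invoke the condenser completeness Theorem~\ref{th-complete}---is the paper's strategy too, and when $\overline E\cap\partial_{\overline{\mathbb R^n}}D$ is empty or $\{\omega\}$ your argument essentially coincides with the paper's Step~1. The genuine gap is exactly where you locate it: the case of a \emph{finite} touching point $x_0$.

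When $x_0\in\mathbb R^n$, the set $E$ is closed in $D$ but not in $\mathbb R^n$; closing it in $\mathbb R^n$ produces $E\cup\{x_0\}$, which meets $D^c$. Theorem~\ref{th-complete} as stated requires closed \emph{nonintersecting} plates, so your parenthetical ``the capacity-zero overlap at $x_0$ being harmless'' is precisely the nontrivial assertion that would have to be proved, not assumed. Two supporting steps are also incomplete. First, the claim that $\nu_0$ has finite $\alpha$-Riesz energy is unjustified: boundedness of $\|\nu_k\|_{g^\alpha_D}=\|\nu_k-\beta^\alpha_{D^c}\nu_k\|_\alpha$ does not bound $\|\nu_k\|_\alpha$, so vague lower semicontinuity of $E_\alpha$ on $\mathfrak M^+$ gives nothing for $\nu_0$ separately, and hence the $C$-absolute-continuity argument ruling out mass at $x_0$ does not go through. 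Second, the identification $\tau_0=\beta^\alpha_{D^c}\nu_0$ from ``taking vague limits in the defining relation'' fails because vague convergence yields only lower semicontinuity of potentials, not n.e.\ convergence.

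The paper's device for the finite-$x_0$ case is the Kelvin transform centred at $x_0$. Inversion sends $x_0\mapsto\omega$, and under the one-point hypothesis the images $E^*$ and $(\overline{D^c})^*$ are closed and disjoint in $\mathbb R^n$, so Theorem~\ref{th-complete} applies directly to the transformed sequence $(\widetilde{\nu_{k_\ell}})^*$. Since the Kelvin transform is an $\alpha$-Riesz isometry (relation~(\ref{K})) and commutes with balayage (relation~(\ref{B})), the strong limit pulls back. The identification of the negative part as a balayage is then obtained from \emph{strong} convergence via Fuglede's remark that strong convergence implies n.e.\ convergence of potentials along a subsequence---not from vague convergence. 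The Kelvin transform is the missing ingredient in your argument.
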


\begin{proof} We can certainly assume that $\alpha<2$, since otherwise the theorem holds true due to the perfectness of the $g^2_D$-kernel, established in~\cite{E1}.

The (strongly fundamental) sequence $\{\nu_k\}_{k\in\mathbb N}\subset\mathcal
E^+_{g_D^\alpha}(E)$ is strongly
bounded, i.e. \begin{equation*}\label{strbound}\sup_{k\in\mathbb
N}\,\|\nu_k\|_{g_D^\alpha}<\infty.\end{equation*} Besides,
by~(\ref{bound}), $\{\nu_k\}_{k\in\mathbb N}$ is vaguely bounded, and hence it has a
vague cluster point~$\nu_0$. Since $\mathcal E^+_{g_D^\alpha}(E)$ is a vaguely closed subset of~$\mathfrak M^+(D)$, we have
$\nu_0\in\mathcal E^+_{g_D^\alpha}(E)$.

Let $\bigl\{\nu_{k_\ell}\bigr\}_{\ell\in\mathbb N}$ be a subsequence
of~$\{\nu_k\}_{k\in\mathbb N}$ such that
\begin{equation}\label{vague}\nu_{k_\ell}\to\nu_0\quad\text{vaguely as \
}\ell\to\infty.\end{equation}
We next proceed to show that $\nu_{k_\ell}\to\nu_0$ also strongly
in~$\mathcal E^+_{g_D^\alpha}(E)$, i.e.
\begin{equation}\label{strong}\lim_{\ell\to\infty}\,\|\nu_{k_\ell}-\nu_0\|_{g_D^\alpha}=0.\end{equation}

As $\bigl\{\nu_{k_\ell}\bigr\}_{\ell\in\mathbb N}\subset\mathcal
E^+_{g_D^\alpha}(E)$, being a subsequence of the strong Cauchy
sequence~$\{\nu_k\}_{k\in\mathbb N}$, is strongly fundamental as
well, we see from (\ref{EG}) that
\begin{equation}\label{form'}\widetilde{\nu_{k_\ell}}:=\nu_{k_\ell}-\beta^\alpha_{D^c}\nu_{k_\ell},\quad \ell\in\mathbb
N,\end{equation} is strongly fundamental in~$\mathcal E_\alpha(\mathbb R^n)$.
The proof of~(\ref{strong}) is given in two steps.

{\bf Step 1.} Throughout this step, let $\overline{E}\cap\partial_{\overline{\mathbb R^n}}D$ either be empty or consist of only~$\omega$. Then $\nu_{k_\ell}$
and~$\beta^\alpha_{D^c}\nu_{k_\ell}$ are supported by the sets~$E$ and~$D^c$, which due to the assumptions made are closed in~$\mathbb R^n$ and nonintersecting. Consider the
condenser $\mathbf B:=(E,D^c)$. The
strong completeness theorem from~\cite{ZUmzh} (or see Theorem~\ref{th-complete} above) yields that there exists the unique
measure $\tilde{\nu}=\tilde{\nu}^+-\tilde{\nu}^-\in\mathcal
E_\alpha(\mathbf B)$
such that
\begin{equation}\label{um'}\lim_{\ell\to\infty}\,\|\widetilde{\nu_{k_\ell}}-
\tilde{\nu}\|_\alpha=0.\end{equation} Furthermore, by this theorem,
$\tilde{\nu}^+$ and~$\tilde{\nu}^-$ are the vague limits of the
positive and the negative parts
of~$\widetilde{\nu_{k_\ell}}$, $\ell\in\mathbb N$,
respectively.  In view of~(\ref{vague}), we thus have
\begin{equation}\label{equal'}\tilde{\nu}^+=\nu_0,\end{equation}
for the vague topology is Hausdorff.

By the remark in~\cite[p.~166]{F1}, it follows from~(\ref{um'}) that there exists a subsequence of the sequence
$\bigl\{\widetilde{\nu_{k_\ell}}\bigr\}_{\ell\in\mathbb
N}$ (denote it again by the same symbol) such that
\[U_\alpha^{\tilde{\nu}}(x)=\lim_{\ell\to\infty}\,U_\alpha^{\widetilde{\nu_{k_\ell}}}(x)
\quad\text{n.e.~in \ }\mathbb R^n.\] On account of~(\ref{form'}) and the
countable subadditivity of $C_\alpha(\cdot)$ over Borel
sets,
we see from the preceding relation that $U_\alpha^{\tilde{\nu}}(x)=0$
n.e.~in~$D^c$ and, therefore, $\tilde{\nu}^-=\beta^\alpha_{D^c}\tilde{\nu}^+$.
Combining this with (\ref{form'}), (\ref{um'}) and~(\ref{equal'}) implies
\[\lim_{\ell\to\infty}\,\bigl\|\bigl(\nu_{k_\ell}-\nu_0\bigr)-
\beta^\alpha_{D^c}\bigl(\nu_{k_\ell}-\nu_0\bigr)\bigr\|_\alpha=0,\]
which in view of~(\ref{EG}) establishes (\ref{strong}).

{\bf Step 2.} We next prove relation~(\ref{strong}) in the case $\overline{E}\cap\partial_{\overline{\mathbb R^n}}D=\{x_0\}$, where $x_0\ne\omega$. Throughout this step, all the
measures can be assumed to have zero mass at~$x_0$, for we can restrict our consideration to those with finite energy.

Define the inversion with respect to~$S(x_0,1)$, namely, each point $x\ne
x_0$ is mapped to the point~$x^*$ on the ray through~$x$ which
issues from~$x_0$, determined uniquely by
\[|x-x_0|\cdot|x^*-x_0|=1.\]
This is a one-to-one, bicontinuous mapping of $\mathbb
R^n\setminus\{x_0\}$ onto itself; furthermore,
\begin{equation}\label{inv}|x^*-y^*|=\frac{|x-y|}{|x_0-x||x_0-y|}.\end{equation}
It can be extended to a one-to-one,
bicontinuous map of $\overline{\mathbb R^n}$ onto itself by setting $x_0\mapsto\omega$.

To each $\nu\in\mathfrak M(\mathbb R^n)$ (with
$\nu(\{x_0\})=0$) we correspond the Kelvin transform
$\nu^*\in\mathfrak M(\mathbb R^n)$ by means of the
formula
\[d\nu^*(x^*)=|x-x_0|^{\alpha-n}\,d\nu(x),\quad x^*\in\mathbb R^n.\]
Then, in view of~(\ref{inv}),
\begin{equation}\label{KP}U_\alpha^{\nu^*}(x^*)=|x-x_0|^{n-\alpha}U_\alpha^{\nu}(x),\quad x^*\in\mathbb R^n,\end{equation}
and therefore
\begin{equation}\label{K}E_\alpha(\nu^*)=E_\alpha(\nu)\end{equation}
(see \cite[Chapter IV, Section 5, n$^\circ$\,19]{L} and
\cite[Chapter V, Section 2, n$^\circ$\,8]{L}, respectively).

It is obvious that the Kelvin transformation is additive, i.e.
\begin{equation}\label{A}\bigl(\nu_1+\nu_2\bigr)^*=
\nu_1^*+\nu_2^*.\end{equation}

We also observe that
\begin{equation}\label{B}(\beta^\alpha_{D^c}\nu)^*=\beta^\alpha_{(\overline{D^c})^*}\nu^*,\end{equation} where $(\overline{D^c})^*$ is
the image of~$\overline{D^c}$ under the inversion $x\mapsto x^*$.\footnote{In fact, $(\overline{D^c})^*=(D^c)^*\cup\{x_0\}$.} Indeed, in view
of~(\ref{KP}) and the definition of the $\alpha$-Riesz balayage, we get
\[U_\alpha^{(\beta^\alpha_{D^c}\nu)^*}(x^*)=|x-x_0|^{n-\alpha}U_\alpha^{\beta^\alpha_{D^c}\nu}(x)
=|x-x_0|^{n-\alpha}U_\alpha^\nu(x)=U_\alpha^{\nu^*}(x^*),\] the
relation being valid for nearly all $x\in D^c$. Consequently, it also holds for nearly all
$x^*\in(\overline{D^c})^*$, because the inversion of a set with $C_\alpha(\cdot)=0$
has the interior $\alpha$-Riesz capacity zero as well
(see~\cite[Chapter~IV, Section~5, n$^\circ$\,19]{L}). Since $(\beta^\alpha_{D^c}\nu)^*$ is supported by~$(\overline{D^c})^*$, identity~(\ref{B}) follows.

 Applying \cite[Lemma~4.3]{L} to
$\nu_{k_\ell}$, $\ell\in\mathbb N$, and~$\nu_0$ (where $\nu_{k_\ell}$, $\ell\in\mathbb N$, and~$\nu_0$ are as above), on account
of~(\ref{bound}) and~(\ref{vague}) we have
\begin{equation}\label{vague'}\nu_{k_\ell}^*\to\nu_0^*\quad\text{vaguely as \
}\ell\to\infty.\end{equation}
Also observe that, according to~(\ref{K}) and the fact that
$\{\widetilde{\nu_{k_\ell}}\}_{\ell\in\mathbb N}$ is strongly fundamental, so is the sequence
$\bigl(\widetilde{\nu_{k_\ell}}\bigr)^*\in\mathcal E_\alpha(\mathbb R^n)$, $\ell\in\mathbb N$, which in consequence of~(\ref{form'}), (\ref{A}) and~(\ref{B}) can be rewritten in
the form
\begin{equation}\label{form}\bigl(\widetilde{\nu_{k_\ell}}\bigr)^*=\nu_{k_\ell}^*-
\bigl(\beta^\alpha_{D^c}\nu_{k_\ell}\bigr)^*=
\nu_{k_\ell}^*-\beta^\alpha_{(\overline{D^c})^*}\nu_{k_\ell}^*,\quad
\ell\in\mathbb N.\end{equation} The positive and the negative parts
of~$\bigl(\widetilde{\nu_{k_\ell}}\bigr)^*$ are supported by the
sets~$E^*$ and~$(\overline{D^c})^*$, respectively,
which are closed in~$\mathbb R^n$ and nonintersecting; hence, the
strong completeness theorem from~\cite{ZUmzh} (see Theorem~\ref{th-complete} above) can be applied. Therefore, there exists the unique
measure $\hat{\nu}=\hat{\nu}^+-\hat{\nu}^-\in\mathcal
E_\alpha(\mathbb R^n)$, where $\hat{\nu}^+$ and~$\hat{\nu}^-$ are supported
by~$E^*$ and~$(\overline{D^c})^*$, respectively,
such that
\begin{equation}\label{um}\lim_{\ell\to\infty}\,\bigl\|\bigl(\widetilde{\nu_{k_\ell}}\bigr)^*-
\hat{\nu}\bigr\|_\alpha=0.\end{equation} Furthermore,
$\hat{\nu}^+$ and~$\hat{\nu}^-$ are the vague limits of the
positive and the negative parts
of~$\bigl(\widetilde{\nu_{k_\ell}}\bigr)^*$, $\ell\in\mathbb N$,
respectively. When combined with~(\ref{vague'}), (\ref{form}) and the fact that the vague topology is Hausdorff, this
implies
\begin{equation}\label{equal}\hat{\nu}^+=\nu^*_0.\end{equation}

In view of (\ref{um}) and the remark in~\cite[p.~166]{F1},
one can choose a subsequence of the sequence
$\bigl\{\bigl(\widetilde{\nu_{k_\ell}}\bigr)^*\bigr\}_{\ell\in\mathbb
N}$ (denote it again by the same symbol) so that
\[U_\alpha^{\hat{\nu}}(x)=\lim_{\ell\to\infty}\,U_\alpha^{(\widetilde{\nu_{k_\ell}})^*}(x)
\quad\text{n.e.~in \ }\mathbb R^n.\] On account of~(\ref{form}),
we thus have $U_\alpha^{\hat{\nu}}(x)=0$
n.e.~in~$(\overline{D^c})^*$, and therefore, by~(\ref{equal}),
\begin{equation}\label{umm}\hat{\nu}^-=\beta^\alpha_{(\overline{D^c})^*}\hat{\nu}^+=
\beta^\alpha_{(\overline{D^c})^*}\nu_0^*.\end{equation}

Using the fact that the Kelvin transformation is an involution and
applying~(\ref{K}), (\ref{A}) and~(\ref{B}) again, we conclude
from~(\ref{um}), (\ref{equal}) and~(\ref{umm}) that
\[\widetilde{\nu_{k_\ell}}\to\nu_0-\beta^\alpha_{D^c}\nu_0
\quad\text{(as \ }\ell\to\infty)\quad\text{in \ }\mathcal
E_\alpha(\mathbb R^n),\] or equivalently, by the definition
of~$\widetilde{\nu_{k_\ell}}$,
\[\lim_{\ell\to\infty}\,\bigl\|\bigl(\nu_{k_\ell}-\nu_0\bigr)-
\beta^\alpha_{D^c}\bigl(\nu_{k_\ell}-\nu_0\bigr)\bigr\|_\alpha=0.\]
Repeated application of~(\ref{EG}) then proves relation~(\ref{strong}) also in the case $\overline{E}\cap\partial_{\overline{\mathbb R^n}}D=\{x_0\}$, where $x_0\ne\omega$. This
completes Step~2.

Since the sequence $\{\nu_k\}_{k\in\mathbb N}$ is strongly
fundamental, $\nu_k\to\nu_0$ strongly by~(\ref{strong}).
It has thus been proved that $\{\nu_k\}_{k\in\mathbb N}$ converges strongly to any of its vague cluster points. As the $\alpha$-Green kernel is strictly positive
definite, $\nu_0$ is the only vague cluster point of $\{\nu_k\}_{k\in\mathbb N}$, and so $\nu_k\to\nu_0$ also vaguely.
\end{proof}

\section{$\alpha$-Green equilibrium measure}\label{crucial}

\begin{theorem}\label{alphaeq} Let\/ $E\subset\mathbb R^n$ be closed in\/~$D$. If \/$\alpha<2$, require additionally\/ $\overline{E}\cap\partial_{\overline{\mathbb R^n}}D$ to consist of at most one
point.
If, moreover, $C_{g_D^\alpha}(E)<\infty$, then there
exists an\/ $\alpha$-Gre\-en interior equilibrium
measure\/~$\gamma=\gamma_E$ on\/~$E$, that is,  a one possessing the
properties\/ $\gamma\in\mathcal E_{g^\alpha_D}^+(E)$ and
\begin{align}\label{equ.00'''}E_{g^\alpha_D}(\gamma)&=\gamma(E)=C_{g^\alpha_D}(E),\\
\label{equ.10'''}U_{g^\alpha_D}^{\gamma}(x)&\geqslant1\quad\text{n.e.~in
\ }E,\\
\label{equ.100'''}U_{g^\alpha_D}^{\gamma}(x)&\leqslant1\quad\text{for
all \ }x\in S^\gamma_D.\end{align} This\/~$\gamma$ solves the problem
of minimizing the energy\/~$E_{g^\alpha_D}(\nu)$ over the convex
class\/~$\Gamma_E$ of all\/~$\nu\in\mathcal E_{g^\alpha_D}(D)$ such
that\/ $U_{g^\alpha_D}^{\nu}(x)\geqslant1$ n.e.~in\/~$E$, and hence
it is unique.
\end{theorem}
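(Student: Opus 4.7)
The plan is to adapt Fuglede's classical construction of the equilibrium measure (\cite[Theorem~4.1]{F1}) to the $\alpha$-Green setting, by using the strong completeness result of Theorem~\ref{alphaeq1} in place of global perfectness, and by handling the fact that $E$ is only closed in $D$ (not compact) through a compact exhaustion. One may assume $C_{g^\alpha_D}(E)>0$, the case $C_{g^\alpha_D}(E)=0$ being trivial with $\gamma=0$. I would choose an increasing sequence of compacts $K_j\subset K_{j+1}\subset E$ with $C_{g^\alpha_D}(K_j)\uparrow C_{g^\alpha_D}(E)$, as permitted by~(\ref{ex}) applied to~$g^\alpha_D$.

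For each compact $K_j$ I would first obtain the $\alpha$-Green equilibrium measure $\gamma_j$ as follows. A minimizing sequence $\{\mu_k\}\subset\mathcal{E}^+_{g^\alpha_D}(K_j,1)$ for the energy is strongly Cauchy by the parallelogram identity in the pre-Hilbert space $\mathcal{E}_{g^\alpha_D}(D)$ combined with convexity of the class; since masses are bounded, Theorem~\ref{alphaeq1} produces a strong and vague limit $\lambda_j\in\mathcal{E}^+_{g^\alpha_D}(K_j)$ with $\|\lambda_j\|^2_{g^\alpha_D}=w_{g^\alpha_D}(K_j)$. The compactness of $K_j$ (upper semicontinuity of $\mu\mapsto\mu(K_j)$ under vague convergence) forces $\lambda_j(K_j)=1$, whereupon $\gamma_j:=C_{g^\alpha_D}(K_j)\lambda_j$ is the equilibrium measure of $K_j$, with $\gamma_j(K_j)=\|\gamma_j\|^2_{g^\alpha_D}=C_{g^\alpha_D}(K_j)$; a Gauss variational argument (cf.\ Lemma~\ref{lequiv}) together with lower semicontinuity of potentials gives $U^{\gamma_j}_{g^\alpha_D}\geqslant 1$ n.e.\ in $K_j$ and $\leqslant 1$ on $S^{\gamma_j}_D$, identifying $\gamma_j$ as the unique norm-minimizer in the convex class $\Gamma_{K_j}:=\{\nu\in\mathcal{E}_{g^\alpha_D}(D):U^\nu_{g^\alpha_D}\geqslant 1\ \text{n.e.\ in}\ K_j\}$. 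Since $\gamma_{j'}\in\Gamma_{K_{j'}}\subset\Gamma_{K_j}$ for $j'>j$, Lemma~\ref{convex} applied to $\gamma_j$ yields
\[\|\gamma_{j'}-\gamma_j\|^2_{g^\alpha_D}\leqslant\|\gamma_{j'}\|^2_{g^\alpha_D}-\|\gamma_j\|^2_{g^\alpha_D}=C_{g^\alpha_D}(K_{j'})-C_{g^\alpha_D}(K_j)\longrightarrow 0,\]
so that $\{\gamma_j\}$ is strongly Cauchy with uniformly bounded masses $\gamma_j(E)=C_{g^\alpha_D}(K_j)\leqslant C_{g^\alpha_D}(E)$. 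A second application of Theorem~\ref{alphaeq1} gives a strong and vague limit $\gamma\in\mathcal{E}^+_{g^\alpha_D}(E)$ with $\|\gamma\|^2_{g^\alpha_D}=C_{g^\alpha_D}(E)$.

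The verification of (\ref{equ.00'''})--(\ref{equ.100'''}) and the minimization property then proceeds as follows. For any $\nu\in\Gamma_E\subset\Gamma_{K_j}$, Lemma~\ref{convex} applied to $\gamma_j$ gives $\|\nu-\gamma_j\|^2_{g^\alpha_D}\leqslant\|\nu\|^2_{g^\alpha_D}-C_{g^\alpha_D}(K_j)$; letting $j\to\infty$ and using the strong convergence $\gamma_j\to\gamma$ yields $\|\nu-\gamma\|^2_{g^\alpha_D}\leqslant\|\nu\|^2_{g^\alpha_D}-C_{g^\alpha_D}(E)$, so that $\gamma$ is the unique minimizer of $E_{g^\alpha_D}$ over $\Gamma_E$ by strict positive definiteness of $g^\alpha_D$. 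Extracting a subsequence along which $U^{\gamma_j}\to U^\gamma$ n.e.\ in $D$ (via the remark from \cite[p.~166]{F1} already used in the proof of Theorem~\ref{alphaeq1}) and passing $U^{\gamma_j}_{g^\alpha_D}\geqslant 1$ n.e.\ in $K_j\uparrow E$ yields~(\ref{equ.10'''}). The standard Gauss variational argument at the minimum of $E_{g^\alpha_D}$ on $\Gamma_E$ then forces $U^\gamma_{g^\alpha_D}=1$ $\gamma$-a.e., whence $\gamma(E)=\int U^\gamma_{g^\alpha_D}\,d\gamma=\|\gamma\|^2_{g^\alpha_D}=C_{g^\alpha_D}(E)$, establishing~(\ref{equ.00'''}); finally, lower semicontinuity of $U^\gamma_{g^\alpha_D}$ excludes any $x_0\in S^\gamma_D$ with $U^\gamma_{g^\alpha_D}(x_0)>1$, as a neighborhood of such $x_0$ would contradict $U^\gamma_{g^\alpha_D}=1$ $\gamma$-a.e.\ there, yielding~(\ref{equ.100'''}).

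The main obstacle is extracting the mass identity $\gamma(E)=C_{g^\alpha_D}(E)$ in the noncompact setting: vague convergence alone gives only $\gamma(E)\leqslant C_{g^\alpha_D}(E)$, since mass could a priori escape to the boundary of~$D$ in $\overline{\mathbb{R}^n}$. My plan circumvents a direct tightness argument by deriving the mass identity \emph{a posteriori} from the variational structure, specifically from the Gauss tangent condition at the minimum of $E_{g^\alpha_D}$ on $\Gamma_E$ combined with the energy identity $\|\gamma\|^2_{g^\alpha_D}=C_{g^\alpha_D}(E)$ produced by the strong limit.
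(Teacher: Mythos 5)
Your construction tracks the paper's closely through the exhaustion, the strong Cauchy property via Lemma~\ref{convex} and monotonicity, the application of Theorem~\ref{alphaeq1}, the energy identity $\|\gamma\|_{g^\alpha_D}^2 = C_{g^\alpha_D}(E)$, and the derivation of~(\ref{equ.10'''}) via the $[$F1, p.~166$]$ subsequence remark. The direct passage to the minimality property (by applying Lemma~\ref{convex} to each $\gamma_j$ with $\nu\in\Gamma_E\subset\Gamma_{K_j}$ and letting $j\to\infty$) is clean and essentially equivalent to invoking $[$F1, Lemma~3.2.2$]$ as the paper does.

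The gap lies in the step where you claim that ``the standard Gauss variational argument at the minimum of $E_{g^\alpha_D}$ on $\Gamma_E$ forces $U^\gamma_{g^\alpha_D}=1$ $\gamma$-a.e.'' The first-order condition for a minimizer over the convex set $\Gamma_E$ is $\langle\gamma,\nu-\gamma\rangle_{g^\alpha_D}\geqslant 0$ for all $\nu\in\Gamma_E$, but this does not yield complementary slackness in the form you need. The admissible upward perturbations $\gamma+\tau$, $\tau\geqslant 0$, remain in $\Gamma_E$ and give only the vacuous $\int U^\gamma_{g^\alpha_D}\,d\tau\geqslant 0$; the downward perturbations $\gamma-s\gamma|_B$ (say $B:=\{U^\gamma_{g^\alpha_D}>1+\delta\}$) are generally \emph{not} in $\Gamma_E$, because on the portion of $E$ where $U^\gamma_{g^\alpha_D}$ equals $1$ one subtracts the strictly positive quantity $sU^{\gamma|_B}_{g^\alpha_D}$ and loses the obstacle inequality. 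Unlike the mass-constrained classes $\mathcal E^+_\kappa(B,1)$ or $\mathcal E^\xi_{g^\alpha_D,f}(F,1)$, for which a genuine Gauss variational argument (cf.\ Lemma~\ref{lequiv} or Theorem~\ref{th-main''}) gives pointwise conclusions, the class $\Gamma_E$ is carved out by potential inequalities and admits no such direct variation. Since you derive both the mass identity $\gamma(E)=C_{g^\alpha_D}(E)$ and then~(\ref{equ.100'''}) from this step, the chain breaks here.

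The paper resolves exactly this difficulty in the reverse order: it establishes~(\ref{equ.100'''}) \emph{first}, not from the variational structure, but from the vague convergence $\gamma_k\to\gamma$ together with the joint lower semicontinuity of $(x,\mu)\mapsto U^\mu_{g^\alpha_D}(x)$ on $D\times\mathfrak M^+(D)$ (cf.\ $[$F1, Lemma~2.2.1$]$): any $x\in S^\gamma_D$ is a limit of points $x_k\in S^{\gamma_k}_D$, so $U^\gamma_{g^\alpha_D}(x)\leqslant\liminf_k U^{\gamma_k}_{g^\alpha_D}(x_k)\leqslant 1$. Integrating~(\ref{equ.100'''}) against $\gamma$ then gives $\gamma(E)\geqslant\|\gamma\|_{g^\alpha_D}^2=C_{g^\alpha_D}(E)$, while vague lower semicontinuity of $\mu\mapsto\mu(E)$ on the exhaustion gives $\gamma(E)\leqslant C_{g^\alpha_D}(E)$, closing the circle. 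To repair your proof you should replace the ``Gauss variational'' step with this support-plus-joint-lsc argument for~(\ref{equ.100'''}); your subsequent deduction of the mass identity (integrating $U^\gamma_{g^\alpha_D}=1$ $\gamma$-a.e.\ — now obtained from~(\ref{equ.10'''}) and~(\ref{equ.100'''}) together) then goes through unchanged.
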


\begin{proof}This theorem needs to be proved only in the case $\alpha<2$, since otherwise it is a special case of~\cite[Theorem~4.1]{F1} in view of the perfectness of the
$g^2_D$-Green kernel.  Also note that we can assume~$E$ to be noncompact in~$D$, for if not, then the theorem follows from~\cite{F1} (see Theorem~2.5 and Lemma~3.2.2 with $t=1$
therein).

Consider an exhaustion of~$E$ by an increasing sequence of sets $K_k\subset E$, $k\in\mathbb N$,
compact in~$D$, and let $\gamma_k=\gamma_{K_k}$ be the $\alpha$-Green equilibrium measure
on~$K_k$.
Then, by~(\ref{equ.00'''}) with $E=K_k$ and (\ref{ex}),
\begin{equation}\label{lim}\lim_{k\to\infty}\,\|\gamma_k\|_{g_D^\alpha}^2=\lim_{k\to\infty}\,C_{g_D^\alpha}(K_k)=C_{g_D^\alpha}(E)<\infty.\end{equation}
Since $\gamma_k\in\Gamma_{K_p}$ for all $k\geqslant p$, which is seen from the monotonicity of $\{K_k\}_{k\in\mathbb N}$ and inequality~(\ref{equ.10'''}) with $E=K_k$,
Lemma~\ref{convex} yields
\[\|\gamma_k-\gamma_p\|_{g_D^\alpha}^2\leqslant\|\gamma_k\|_{g_D^\alpha}^2-\|\gamma_p\|_{g_D^\alpha}^2\quad\text{for all \ }k\geqslant p.\]
In consequence of the last two relations,
$\{\gamma_k\}_{k\in\mathbb N}\subset\mathcal E^+_{g_D^\alpha}(E)$ is
strongly fundamental. In addition, by~(\ref{equ.00'''}) with $E=K_k$,
\begin{equation}\label{hhhh}\gamma_k(E)=C_{g_D^\alpha}(K_k)<
C_{g_D^\alpha}(E)<\infty\quad\text{for all \ }k\in\mathbb N,\end{equation} so
that all the assumptions of Theorem~\ref{alphaeq1}
for~$\{\gamma_k\}_{k\in\mathbb N}$ are satisfied. Hence, there exists the
unique $\gamma\in\mathcal E^+_{g_D^\alpha}(E)$ such that
$\gamma_k\to\gamma$ both strongly and vaguely.

On account of~(\ref{lim}), we thus get
\begin{equation}\label{hh}\|\gamma\|_{g_D^\alpha}^2=\lim_{k\to\infty}\,\|\gamma_k\|_{g_D^\alpha}^2=C_{g_D^\alpha}(E).\end{equation}
According to \cite{F1} (see the remark on p.~166 therein), the strong
convergence of $\gamma_k$ to~$\gamma$ also yields that there exists
a subsequence $\gamma_{k_\ell}=\gamma_{K_{k_\ell}}$, $\ell\in\mathbb N$, of $\{\gamma_k\}_{k\in\mathbb N}$ such that
\begin{equation*}\lim_{\ell\to\infty}\,U_{g_D^\alpha}^{\gamma_{k_\ell}}(x)=
U_{g_D^\alpha}^\gamma(x)\quad\text{n.e.~in \ }D,
\end{equation*}  while by (\ref{equ.10'''}) for $E=K_{k_\ell}$,
\begin{equation*}U_{g_D^\alpha}^{\gamma_{k_\ell}}(x)\geqslant1\quad\text{n.e.~in \ }K_{k_\ell}.\end{equation*}
Since the sets $K_{k_\ell}$, $\ell\in\mathbb N$, increase and $E=\bigcup_{\ell\in\mathbb N}K_{k_\ell}$, the last two relations imply~(\ref{equ.10'''}).
Here we have used the fact that the $\alpha$-Green capacity of a countable union
of Borel sets with zero $\alpha$-Green capacity is still zero; see~\cite{F1}.

Fix $x\in S^\gamma_D$. As $\gamma_k\to\gamma$ vaguely, one can choose
$x_k\in S^{\gamma_k}_D$ so that $x_k\to x$ as $k\to\infty$. Because of the
fact (cf.~\cite[Lemma~2.2.1]{F1}) that $U_{g_D^\alpha}^{\mu}(x)$ is
lower semicontinuous on the product space $D\times\mathfrak M^+(D)$,
where $\mathfrak M^+(D)$ is equipped with the vague topology, we get
\[U_{g_D^\alpha}^\gamma(x)\leqslant\liminf_{k\to\infty}\,U_{g_D^\alpha}^{\gamma_k}(x_k).\]
Since, by (\ref{equ.100'''}) for $E=K_k$, $U_{g_D^\alpha}^{\gamma_k}(x_k)\leqslant1$ for all $k\in\mathbb N$,
inequality~(\ref{equ.100'''}) follows.

In view of the vague convergence of $\gamma_k$ to $\gamma$, we also have
\begin{equation*}\gamma(E)\leqslant\liminf_{k\to\infty}\,\gamma_k(E),\end{equation*}
so that
$\gamma(E)\leqslant C_{g_D^\alpha}(E)$ by~(\ref{hhhh}). When combined with~(\ref{hh}), this shows that, in order to complete the proof
of~(\ref{equ.00'''}), it is left to establish the inequality $\gamma(E)\geqslant C_{g_D^\alpha}(E)$, but it follows at once by integrating~(\ref{equ.100'''}) with respect
to~$\gamma$.

Finally, \cite[Lemma~3.2.2]{F1} with $t=1$ yields the very last assertion of the theorem.\end{proof}

\begin{remark} $\gamma_E$ coincides up to a constant factor with the solution~$\lambda_E$ of Problem~\ref{pr2} (for $E$ in place of~$F$) with
$\sigma=\infty$ and $f=0$. See Corollaries~\ref{cor-0'-infty} and~\ref{cor-sup} for a more detailed information about the properties of the $\alpha$-Green
potential and the support of~$\lambda_E$.\end{remark}

\section{Proof of Theorem \ref{lemma-main}}\label{lemma-main-proof}
In this section we follow methods developed in~\cite{ZCAOT}
(see Theorems~2.2 and~3.1 therein).  Under the assumptions of
Theorem~\ref{lemma-main}, the following auxiliary result holds.

\begin{lemma}\label{lemma-aux} For any\/ $\sigma\in\mathfrak C(F)\cup\{\infty\}$, the metric space
\[\mathcal E^\sigma_{g^\alpha_D}(F,1):=
\bigl\{\mu\in\mathcal E^+_{g^\alpha_D}(F,1): \
\mu\leqslant\sigma\bigr\}\] is strongly complete. In more detail, any strong Cauchy sequence\/ $\{\mu_k\}_{k\in\mathbb
N}\subset\mathcal E^\sigma_{g^\alpha_D}(F,1)$ converges both
strongly and vaguely to the unique\/ $\mu_0\in\mathcal
E^\sigma_{g^\alpha_D}(F,1)$.
\end{lemma}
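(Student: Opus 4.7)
The plan is to invoke the perfectness-type result of Theorem~\ref{alphaeq1} to obtain a strong/vague limit $\mu_0$, and then verify that $\mu_0$ lies in the prescribed class $\mathcal E^\sigma_{g^\alpha_D}(F,1)$. First I would note that a strong Cauchy sequence $\{\mu_k\}$ in $\mathcal E^\sigma_{g^\alpha_D}(F,1)$ automatically satisfies $\sup_k\mu_k(F)=1<\infty$, so the boundedness hypothesis of Theorem~\ref{alphaeq1} (with $E=F$) is fulfilled, while the geometric requirement is exactly assumption~$(\ast)$ standing in Theorem~\ref{lemma-main}. Theorem~\ref{alphaeq1} then supplies a unique $\mu_0\in\mathcal E^+_{g^\alpha_D}(F)$ that is simultaneously the strong and the vague limit of $\{\mu_k\}$; uniqueness in $\mathcal E^\sigma_{g^\alpha_D}(F,1)$ follows from the strict positive-definiteness of $g^\alpha_D$.

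The remaining task is to check that $\mu_0\in\mathcal E^\sigma_{g^\alpha_D}(F,1)$, namely that (i) $\mu_0\leqslant\sigma$ if $\sigma\ne\infty$ and (ii) $\mu_0(F)=1$. For~(i) I would test against arbitrary $\varphi\in C_c^+(D)$ and pass to the vague limit in $\langle\varphi,\mu_k\rangle\leqslant\langle\varphi,\sigma\rangle$, which forces $\mu_0\leqslant\sigma$ as Radon measures. For the upper estimate in (ii), since $\mu_0\in\mathfrak M^+(F)$ and $D$ is open, vague lower semicontinuity gives $\mu_0(F)=\mu_0(D)\leqslant\liminf_k\mu_k(D)=1$.

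The heart of the proof, and the main obstacle, is the reverse inequality $\mu_0(F)\geqslant1$, since mass could a priori escape to $\partial D$ or to the Alexandroff point $\omega$. My plan is to exploit the $\alpha$-Green equilibrium measure $\gamma:=\gamma_F$ provided by Theorem~\ref{alphaeq} (which is available precisely because $C_{g^\alpha_D}(F)<\infty$ and $(\ast)$ holds); it satisfies $U^{\gamma}_{g^\alpha_D}\geqslant1$ n.e.~in $F$ and $U^{\gamma}_{g^\alpha_D}\leqslant1$ on $S^{\gamma}_D$. Every measure of finite $\alpha$-Green energy is $C$-absolutely continuous, and by Lemma~\ref{lemma:equiv} the $\alpha$-Riesz and $\alpha$-Green polar sets coincide, so the inequality $U^{\gamma}_{g^\alpha_D}\geqslant1$ holds $\mu_k$-a.e.; hence
\[E_{g^\alpha_D}(\mu_k,\gamma)=\bigl\langle U^{\gamma}_{g^\alpha_D},\mu_k\bigr\rangle\geqslant\mu_k(F)=1.\]
Strong convergence $\mu_k\to\mu_0$ combined with the Cauchy--Schwarz inequality in $\mathcal E_{g^\alpha_D}(D)$ then yields $E_{g^\alpha_D}(\mu_0,\gamma)\geqslant1$. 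On the other hand, once the bound $U^{\gamma}_{g^\alpha_D}\leqslant1$ is upgraded from $S^{\gamma}_D$ to all of $D$, integration against $\mu_0\in\mathfrak M^+(F)$ gives $E_{g^\alpha_D}(\mu_0,\gamma)\leqslant\mu_0(F)$. Together these two estimates produce $\mu_0(F)\geqslant1$, closing the argument.

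The principal technical difficulty will be justifying the Frostman-type maximum principle $U^{\gamma}_{g^\alpha_D}\leqslant1$ throughout $D$ in the delicate case $\alpha<2$. My approach here would be to write $U^{\gamma}_{g^\alpha_D}=U^{\gamma}_{\alpha}-U^{\beta^\alpha_{D^c}\gamma}_{\alpha}$, use the defining balayage identity $U^{\gamma}_{\alpha}=U^{\beta^\alpha_{D^c}\gamma}_{\alpha}$ n.e.~on $D^c$ to see that the bound already holds n.e.~on $S^{\gamma}_D\cup D^c$, and then invoke Frostman's classical maximum principle for the $\alpha$-Riesz potential of the positive measure $\gamma$ (after a standard upper-semicontinuous regularization handling the exceptional polar set). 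With the maximum principle in hand, the argument above completes the proof.
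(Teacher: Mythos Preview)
Your proposal is correct but follows a genuinely different route from the paper. Both arguments begin the same way, invoking Theorem~\ref{alphaeq1} to obtain a strong and vague limit $\mu_0\in\mathcal E^+_{g^\alpha_D}(F)$, and both identify $\mu_0(F)=1$ as the only nontrivial point. From there the approaches diverge. The paper exhausts $F$ by compacts $K_m$ and works with the equilibrium measures $\gamma_m$ of the \emph{tails} $K_m^*:={\rm C\ell}_D(F\setminus K_m)$: it shows that $\{\gamma_m\}$ is strongly Cauchy and vaguely null, hence $\|\gamma_m\|_{g^\alpha_D}\to0$ by a second appeal to Theorem~\ref{alphaeq1}, and then controls the escaping mass via $\mu_k(F\setminus K_m)\leqslant\langle U^{\gamma_m}_{g^\alpha_D},\mu_k\rangle\leqslant\|\gamma_m\|_{g^\alpha_D}\,\|\mu_k\|_{g^\alpha_D}$. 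This uses only the \emph{lower} bound $U^{\gamma_m}_{g^\alpha_D}\geqslant1$ n.e.\ on $K_m^*$ and Cauchy--Schwarz; no maximum principle enters.

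Your argument instead uses the single equilibrium measure $\gamma_F$ of $F$ itself and the sandwich $1\leqslant E_{g^\alpha_D}(\mu_0,\gamma_F)\leqslant\mu_0(F)$. This is more direct---no exhaustion, no auxiliary limiting argument---but the right-hand inequality hinges on the global bound $U^{\gamma_F}_{g^\alpha_D}\leqslant1$ throughout $D$, which is exactly the $\alpha$-Riesz domination principle applied to $U^{\gamma_F}_\alpha\leqslant1+U^{\beta^\alpha_{D^c}\gamma_F}_\alpha$. The paper does carry out precisely this step, but only later, in the proofs of Corollaries~\ref{cor-0} and~\ref{cor-0'-infty} (via \cite[Theorems~1.27,~1.29]{L}). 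So your route trades the exhaustion machinery for one appeal to the Riesz domination principle; the paper's route stays entirely within the energy framework at the cost of an extra limiting argument.
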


\begin{proof}Fix a strong Cauchy sequence $\{\mu_k\}_{k\in\mathbb N}\subset\mathcal
E^\sigma_{g^\alpha_D}(F,1)$. According to Theorem~\ref{alphaeq1}, there exists the unique $\mu_0\in\mathcal E^+_{g^\alpha_D}(F)$
such that
\[\mu_k\to\mu_0\quad\text{strongly and
vaguely}.\]  Actually, $\mu_0\in\mathcal
E^\sigma_{g^\alpha_D}(F)$, since $\mathcal E_{g^\alpha_D}^\sigma(F)$
is vaguely closed. Hence, it is left to show that
\begin{equation}\label{b}\mu_0(F)=1.\end{equation}

Assume $F$ to be noncompact, for if not, then (\ref{b}) is evident. Consider an exhaustion of~$F$ by an increasing sequence of sets $K_m\subset F$, $m\in\mathbb N$,
compact in~$D$; then
\begin{align*}1=\lim_{k\to\infty}\,\mu_k(F)\geqslant\mu_0(F)&=\lim_{m\to\infty}\,\mu_0(K_m)\geqslant\lim_{m\to\infty}\,\limsup_{k\to\infty}\,\mu_k(K_m)\\&{}=
1-\lim_{m\to\infty}\,\liminf_{k\to\infty}\,\mu_k(F\setminus
K_m).\end{align*} Therefore, identity~(\ref{b}) will be established once we prove
\begin{equation}\label{l}\lim_{m\to\infty}\,\liminf_{k\to\infty}\,\mu_k(F\setminus
K_m)=0.\end{equation}

Write $K_m^*:={\mathrm C\ell}_D(F\setminus K_m)$. It is seen from Theorem~\ref{alphaeq} that, under the assumptions made, there exists
the $\alpha$-Green
equilibrium measure~$\gamma_m$ on~$K_m^*$, and it solves the problem of minimizing $E_{g^\alpha_D}(\nu)$ over the convex cone $\Gamma_{K_m^*}$. Since, by the monotonicity
of~$K_m^*$, $m\in\mathbb N$, and relation~(\ref{equ.10'''}) for~$E=K_m^*$, $\gamma_m$ belongs to~$\Gamma_p$ for all $p\geqslant m$, Lemma~\ref{convex}
yields
\[
\|\gamma_m-\gamma_p\|^2_{g_D^\alpha}\leqslant\|\gamma_m\|^2_{g_D^\alpha}-\|\gamma_p\|^2_{g_D^\alpha}\quad\mbox{for all \ }p\geqslant m.\]
Furthermore, it is clear from~(\ref{equ.00'''}) for $E=K_m^*$ that the sequence
$\|\gamma_m\|^2_{g_D^\alpha}$, $m\in\mathbb N$, is
bounded and nonincreasing, and hence it is fundamental in~$\mathbb R$. The
preceding inequality thus implies that $\gamma_m$,
$m\in\mathbb N$, is strongly fundamental in~$\mathcal
E^+_{g_D^\alpha}(D)$. Since it obviously converges vaguely to zero, zero is
also its strong limit due to Theorem~\ref{alphaeq1}. Hence,
\begin{equation*}
\lim_{m\to\infty}\,\|\gamma_m\|_{g_D^\alpha}=0.
\label{27}
\end{equation*}
Besides, by~(\ref{equ.10'''}) for $E=K^*_m$,
\[\mu_k(F\setminus K_m)\leqslant\mu_k(K^*_m)\leqslant\bigl\langle
U_{g^\alpha_D}^{\gamma_m},\mu_k\bigr\rangle\leqslant\|\gamma_m\|_{g^\alpha_D}\cdot\|\mu_k\|_{g^\alpha_D}\quad\text{for all
\ }k,\,m\in\mathbb N.\] As $\|\mu_k\|_{g^\alpha_D}$,
$k\in\mathbb N$, is bounded, combining the last two relations yields~(\ref{l}).\end{proof}

Now we are able to complete the proof of Theorem~\ref{lemma-main}. In view of~(\ref{green}), one can choose
$\nu_k\in\mathcal E^\sigma_{g^\alpha_D,f}(F,1)$, $k\in\mathbb N$,
so that
\begin{equation}\label{hryu'}\lim_{k\to\infty}\,G_{g^\alpha_D,f}(\nu_k)=G^\sigma_{g^\alpha_D,f}(F,1)<\infty.\end{equation}
Based on the convexity of the class
$\mathcal E^\sigma_{g^\alpha_D,f}(F,1)$ and the pre-Hilbert structure on~$\mathcal E_{g^\alpha_D}(D)$, with the help of arguments similar to those in the proof of
Lemma~\ref{uniqueness} we obtain
\begin{equation*}\label{par}0\leqslant\|\nu_k-\nu_p\|^2_{g^\alpha_D}\leqslant-4G^\sigma_{g^\alpha_D,f}(F,1)+
2G_{g^\alpha_D,f}(\nu_k)+2G_{g^\alpha_D,f}(\nu_p)\quad\text{for all \ }k,p\in\mathbb N.\end{equation*}
Substituting (\ref{hryu'}) into this relation implies that $\{\nu_k\}_{k\in\mathbb N}$ is strongly
fundamental in the metric space $\mathcal
E^\sigma_{g^\alpha_D}(F,1)$. By Lemma~\ref{lemma-aux},
$\{\nu_k\}_{k\in\mathbb N}$ therefore converges both strongly and vaguely to the unique
$\nu_0\in\mathcal E^\sigma_{g^\alpha_D}(F,1)$. On account of Lemma~\ref{lemma-cont}, we thus get
\begin{equation}\label{111}G_{g^\alpha_D,f}(\nu_0)\leqslant\lim_{k\to\infty}\,G_{g^\alpha_D,f}(\nu_k)=
G^\sigma_{g^\alpha_D,f}(F,1)<\infty.\end{equation} Hence, $\nu_0\in\mathcal E^\sigma_{g^\alpha_D,f}(F,1)$ and, consequently, $G_{g^\alpha_D,f}(\nu_0)\geqslant
G^\sigma_{g^\alpha_D,f}(F,1)$. Combined with~(\ref{111}), this shows that $\nu_0=:\lambda^\sigma_{F}$ is the solution to Problem~\ref{pr2}.\qed

\section{Proof of the assertions formulated in Section~\ref{sec-main-var-green}}\label{ss-1}
\subsection{Proof of Theorem \ref{th-main''}}\label{th-main''-proof}

Fix $\lambda\in\mathcal E_{g^\alpha_D,f}^\xi(F,1)$, and first assume that it solves Problem~\ref{pr2}. Then inequality~(\ref{b1}) holds for $w_\lambda=L$, where
\[L:=\sup\,\bigl\{q\in\mathbb R: \ W_{g^\alpha_D,f}^\lambda(x)\geqslant q\quad(\xi-\lambda)\mbox{-a.e.~in\ }F\bigr\}.\] In turn,
(\ref{b1}) with $w_\lambda=L$ implies $L<\infty$, since
$W_{g^\alpha_D,f}^\lambda(x)<\infty$ holds n.e.~in~$F_0$, hence $(\xi-\lambda)$-a.e.~in~$F_0$, while
$(\xi-\lambda)(F_0)>0$. Also, $L>-\infty$, for
 $f$ is bounded from below.

We proceed by establishing (\ref{b2})
for $w_\lambda=L$. Having denoted (cf.~\cite{DS,R})
\[F^+(w):=\bigl\{x\in F:\ W_{g^\alpha_D,f}^\lambda(x)>w\bigr\}\quad\text{and}\quad
F^-(w):=\bigl\{x\in F:\ W_{g^\alpha_D,f}^\lambda(x)<w\bigr\},\]
where $w\in\mathbb R$ is arbitrary, we assume on the contrary that
(\ref{b2}) for $w_\lambda=L$ does not hold. In view of the lower semicontinuity of~$W_{g^\alpha_D,f}^\lambda$ on~$F$, then one can choose $w_1\in(L,\infty)$ so that
$\lambda\bigl(F^+(w_1)\bigr)>0$. At the same time, as $w_1>L$, relation~(\ref{b1})  with $w_\lambda=L$  yields
$(\xi-\lambda)\bigl(F^-(w_1)\bigr)>0$. Therefore, there exist compact sets $K_1\subset F^+(w_1)$ and $K_2\subset F^-(w_1)$ such that
\[0<\lambda(K_1)<(\xi-\lambda)(K_2).\]

Write $\tau:=(\xi-\lambda)\bigl|_{K_2}$; then $\tau\in\mathcal E^+_{g^\alpha_D}(K_2)$. Since $\bigl\langle W_{g^\alpha_D,f}^\lambda,\tau\bigr\rangle\leqslant
w_1\tau(K_2)<\infty$, we get $\langle f,\tau\rangle<\infty$. Define
\[\theta:=\lambda-\lambda\bigl|_{K_1}+c\tau,\quad\mbox{where
\ }c:=\lambda(K_1)\bigl/\tau(K_2)\in(0,1).\]
A straightforward verification shows that $\theta(F)=1$ and $\theta\leqslant\xi$, and so $\theta\in\mathcal E^\xi_{g^\alpha_D,f}(F,1)$. On the other hand,
\begin{align*}
\bigl\langle W_{g^\alpha_D,f}^\lambda,\theta-\lambda\bigr\rangle&=\bigl\langle
W_{g^\alpha_D,f}^\lambda-w_1,\theta-\lambda\bigr\rangle\\&{}=-\bigl\langle
W_{g^\alpha_D,f}^\lambda-w_1,\lambda\bigl|_{K_1}\bigr\rangle+c\bigl\langle
W_{g^\alpha_D,f}^\lambda-w_1,\tau\bigr\rangle<0,\end{align*} which is
impossible in view of Lemma~\ref{lequiv}. This proves the necessary part of
the theorem.

Next, let $\lambda$ satisfy both~(\ref{b1}) and~(\ref{b2}) for some
$w_\lambda\in\mathbb R$. Then $\lambda\bigl(F^+(w_\lambda)\bigr)=0$ and
$\bigl(\xi-\lambda\bigr)\bigl(F^-(w_\lambda)\bigr)=0$. For any $\nu\in\mathcal E^\xi_{g^\alpha_D,f}(F,1)$, we therefore obtain
\begin{eqnarray*}\bigl\langle
W_{g^\alpha_D,f}^\lambda,\nu-\lambda\bigr\rangle&\!\!\!\!\!=\!\!\!\!\!&\bigl\langle
W_{g^\alpha_D,f}^\lambda-w_\lambda,\nu-\lambda\bigr\rangle\\
&\!\!\!\!\!{}=\!\!\!\!\!&\bigl\langle W_{g^\alpha_D,f}^\lambda-w_\lambda,\nu\bigl|_{F^+(w_\lambda)}\bigr\rangle+\bigl\langle
W_{g^\alpha_D,f}^\lambda-w_\lambda,(\nu-\xi)\bigl|_{F^-(w_\lambda)}\bigr\rangle\geqslant0.\end{eqnarray*}
Application of ~Lemma~\ref{lequiv} shows that, indeed, $\lambda$ is the solution to Problem~\ref{pr2}.\qed

\subsection{Proof of Corollary \ref{cor-02}}\label{proof1} Under the conditions of the corollary, $W_{g^\alpha_D,f}^\lambda(x)>0$ in~$D$; hence, the number~$w_\lambda$ from Theorem~\ref{th-main''} satisfies relation
\begin{equation}\label{strposss}w_\lambda\in(0,\infty).\end{equation} Furthermore, in the notation $\Psi(x):=U_\alpha^{\lambda+\chi}(x)-U_\alpha^{\beta^\alpha_{D^c}(\lambda+\chi)}(x)$, $x\in\mathbb R^n$, inequality~(\ref{b1}) can be rewritten in the form
\begin{equation*}\Psi(x)\geqslant w_\lambda>0\quad(\xi-\lambda)\text{-a.e.~in\ }F.\end{equation*}
We can certainly assume that there is $y_0\in\partial D\cap S^{\xi-\lambda}_{\mathbb R^n}$, for if not, then the corollary is obvious. Since for every $\varepsilon>0$ it holds
$(\xi-\lambda)\bigl(B(y_0,\varepsilon)\bigr)>0$, one can choose $x_\varepsilon\in F\cap B(y_0,\varepsilon)$ so that
$\Psi(x_\varepsilon)\geqslant w_\lambda>0$. Therefore,
\begin{equation*}\limsup_{x\to y_0, \ x\in D}\,\Psi(x)\geqslant w_\lambda>0.\end{equation*}

On the other hand, $\Psi(x)\geqslant0$ for all $x\in\mathbb R^n$ and $\Psi(x)=0$ n.e.~in~$D^c$; hence,
 \[\liminf_{x\to y_0, \ x\in D^c}\,\Psi(x)=0.\]
Consequently, $\Psi$ is discontinuous on $\partial D\cap S^{\xi-\lambda}_{\mathbb R^n}$, and Lusin's type theorem for the $\alpha$-Riesz potentials (see~\cite[Theorem~3.6]{L})
establishes the corollary.\qed

\subsection{Proof of Corollary \ref{cor-0}}\label{proof2} Fix $\lambda_0\in\mathcal E^\xi_{g^\alpha_D,f}(F)$. We first assume that it solves
     Problem~\ref{pr2}, and let $w'_{\lambda_0}\in(0,\infty)$ be the number from (\ref{b1}) and~(\ref{b2}) for $f=0$ (see also~(\ref{strposss})). Then (\ref{b2}) can be
     rewritten in the form
\begin{equation*}\label{b2-0}U^{\lambda_0}_{\alpha}(x)\leqslant w'_{\lambda_0}+U^{\beta^\alpha_{D^c}\lambda_0}_{\alpha}(x)\quad\mbox{for all \ }x\in
S^{\lambda_0}_D.\end{equation*}
Note that the right-hand side of this relation is $\alpha$-superharmonic in~$\mathbb R^n$. Applying \cite[Theorems~1.27, 1.29]{L}, we see that, actually,
\begin{equation}\label{ppp}U^{\lambda_0}_{\alpha}(x)\leqslant w'_{\lambda_0}+U^{\beta^\alpha_{D^c}\lambda_0}_{\alpha}(x)\quad\mbox{for all \ }x\in\mathbb R^n,\end{equation}
which gives~(\ref{b2'}). Combining~(\ref{b2'}) with~(\ref{b1}) for $\lambda=\lambda_0$ and $w_\lambda=w_{\lambda_0}'$ results in~(\ref{b1'}).

Assuming now that both (\ref{b1'}) and (\ref{b2'}) hold for some $w'_{\lambda_0}\in(0,\infty)$, we conclude from Theorem~\ref{th-main''} that $\lambda_0$ solves
Problem~\ref{pr2}, as was to be proved.

Finally, let $\alpha<2$ and let $\lambda_0$ solve Problem~\ref{pr2}. To establish (\ref{identity}), assume on the contrary that there exists $x_0\in F$ such that $x_0\not\in
S^{\lambda_0}_D$. Then one can choose $r>0$ so that \[\overline{B}(x_0,r):=\{x\in\mathbb R^n: |x-x_0|\leqslant r\}\subset D\quad\text{and}\quad\overline{B}(x_0,r)\cap
S^{\lambda_0}_D=\varnothing.\] It follows that $(\xi-\lambda_0)\bigl(B(x_0,r)\cap F\bigr)>0$. Therefore, by~(\ref{b1'}),
\begin{equation}\label{bempty}U^{\lambda_0}_{\alpha}(x_1)=w'_{\lambda_0}+U^{\beta^\alpha_{D^c}\lambda_0}_{\alpha}(x_1)\quad\text{for some \ }x_1\in B(x_0,r)\cap
F.\end{equation}
  As $U^{\lambda_0}_{\alpha}(\cdot)$ is $\alpha$-harmonic in $B(x_0,r)$ and continuous on $\overline{B}(x_0,r)$, while
  $w'_{\lambda_0}+U^{\beta^\alpha_{D^c}\lambda_0}_{\alpha}(\cdot)$ is $\alpha$-super\-harmonic in~$\mathbb R^n$, we conclude from~(\ref{ppp}) and~(\ref{bempty}) with the help of
  \cite[Theorem~1.28]{L} that
\[U^{\lambda_0}_{\alpha}(x)=w'_{\lambda_0}+U^{\beta^\alpha_{D^c}\lambda_0}_{\alpha}(x)\quad m_n\mbox{-a.e.~in\ }\mathbb R^n.\]
This implies $w'_{\lambda_0}=0$, for $U^{\beta^\alpha_{D^c}\lambda_0}_{\alpha}(x)=U^{\lambda_0}_{\alpha}(x)$ holds n.e.~in~$D^c$, hence, also $m_n$-a.e.~in~$D^c$. A
contradiction.\qed

\subsection{Proof of Theorem~\ref{th-infty}} This theorem is a very particular case of \cite[Theorems~7.1, 7.2, 7.3]{ZPot2} (see also~\cite{Z5a} and Theorems~1,~2 and
Proposition~1 therein).\qed

\subsection{Proof of Corollary~\ref{cor-infty}}  Since (a) follows directly from
Theorem~\ref{infcap}, assume the conditions of assertion~(b) to hold. In the same manner as in Section~\ref{proof1}, then one can see that the number~$w_f$ from Theorem~\ref{th-infty} is strictly positive. Hence,  by~(\ref{F1}),
\[\Psi(x)\geqslant w_f>0\quad\mbox{n.e.~in\ }F,\]
where $\Psi$ has been defined in Section~\ref{proof1}. We can certainly assume that there is $y_0\in\partial D\cap{\mathrm C\ell}_{\mathbb R^n}\breve{F}$, for if not, then (b)
is obvious. For every $\varepsilon>0$, it holds $C_\alpha\bigl(B(y_0,\varepsilon)\cap\breve{F}\bigr)>0$, and therefore one can choose $x_\varepsilon\in
B(y_0,\varepsilon)\cap\breve{F}$ so that $\Psi(x_\varepsilon)\geqslant w_f>0$. This yields
 \begin{equation*}\limsup_{x\to y_0, \ x\in D}\,\Psi(x)\geqslant w_f>0.\end{equation*}
 Likewise as in Section~\ref{proof1}, we can thus see that $\Psi$ is discontinuous on $\partial D\cap{\mathrm C\ell}_{\mathbb R^n}\breve{F}$, and Lusin's type theorem for the
 $\alpha$-Riesz potentials establishes the corollary.\qed

\subsection{Proof of Corollary~\ref{cor-0'-infty}} Let $w:=w_f$ be the number from Theorem~\ref{th-infty} for $f=0$; then $w>0$.
Proof of the statement that $\lambda_F\in\mathcal E_{g^\alpha_D}^+(F,1)$ solves Problem~\ref{pr2} if and only if both~(\ref{w0-infty}) and~(\ref{w1-infty}) hold is
based on Theorem~\ref{th-infty} and runs in a way similar to that in the proof of Corollary~\ref{cor-0}. In particular, as an application of \cite[Theorems~1.27, 1.29]{L}, we
conclude from~(\ref{F2}) with $f=0$ that, if $\lambda_F$ solves Problem~\ref{pr2}, then
\begin{equation}\label{ppp-new}U^{\lambda_F}_{\alpha}(x)\leqslant w+U^{\beta^\alpha_{D^c}\lambda_F}_{\alpha}(x)\quad\mbox{for all \ }x\in\mathbb R^n.\end{equation}

The very last statement of the corollary is obtained from~(\ref{w0-infty}) with the help of standard arguments (see, e.g., \cite[pp.~137--138]{L}), based on the strict positive
definiteness of the $\alpha$-Green kernel.\qed

\subsection{Proof of Corollary~\ref{cor-sup}}
Having first assumed $\alpha<2$, we start by showing that
\begin{equation}\label{lesssup}U_{g^\alpha_D}^{\lambda_F}(x)<w\quad\text{for all \ }x\in D\setminus S_D^{\lambda_F}.\end{equation}
Suppose to the contrary that (\ref{lesssup}) is not satisfied for some $x_0\in D\setminus S^{\lambda_F}_D$.
Then $U_{g^\alpha_D}^{\lambda_F}(x_0)=w$ in accordance with~(\ref{w1-infty}), or equivalently
\begin{equation}\label{rrr}U_\alpha^{\lambda_F}(x_0)=w+U_\alpha^{\beta^\alpha_{D^c}\lambda_F}(x_0).
\end{equation}
Choose $\varepsilon>0$ so that $\overline{B}(x_0,\varepsilon)\subset D\setminus S^{\lambda_F}_D$. Since then
$U_\alpha^{\lambda_F}(\cdot)$ is $\alpha$-harmonic in~$B(x_0,\varepsilon)$ and continuous
on $\overline{B}(x_0,\varepsilon)$, while $w+U_\alpha^{\beta^\alpha_{D^c}\lambda_F}(\cdot)$
is $\alpha$-super\-harmonic in~$\mathbb R^n$, we conclude
from~(\ref{ppp-new}) and~(\ref{rrr}) with the help of~\cite[Theorem~1.28]{L} that
\begin{equation*}\label{dr}U_\alpha^{\lambda_F}(x)=w+U_\alpha^{\beta^\alpha_{D^c}\lambda_F}(x)\quad
m_n\text{-a.e.~in \ }\mathbb R^n.
\end{equation*}
As $U^{\beta^\alpha_{D^c}\lambda_F}_{\alpha}(x)=U^{\lambda_F}_{\alpha}(x)$ n.e.~in~$D^c$, we thus get $w=0$. A contradiction.

We next proceed by proving the former identity in (\ref{desc-sup}). Let, on the contrary,
there exist $x_1\in\breve{F}$ such that $x_1\not\in S^{\lambda_F}_D$,
and let $V\subset D\setminus S^{\lambda_F}_D$ be an open neighborhood
of~$x_1$. Then, by~(\ref{lesssup}), $U_{g^\alpha_D}^{\lambda_F}(x)<w$
for all $x\in V$. On the other hand, since $V\cap F$ has
nonzero capacity, $U_{g^\alpha_D}^{\lambda_F}(x_2)=w$ for some
$x_2\in V$ by~(\ref{w0-infty}). The contradiction obtained shows that, indeed, $S^{\lambda_F}_D=\breve{F}$. Substituting this identity into~(\ref{lesssup})
establishes~(\ref{less}) for $\alpha<2$.

In the rest of the proof, $\alpha=2$.
To verify~(\ref{less}), assume, on the contrary, that it does not hold for some~$x_3$ in the domain $D_0:=D\setminus\breve{F}$. According to~(\ref{w1-infty}), then
$U_{g^2_D}^{\lambda_F}(x_3)=w$, which in view of the harmonicity of
$U_{g^2_D}^{\lambda_F}$ in~$D_0$ implies, by the maximum
principle, that
\begin{equation*}\label{incr}U_{g^2_D}^{\lambda_F}(x)=w\quad\text{for all \
}x\in D_0.\end{equation*}
Thus,
\[\lim_{x\to z, \ x\in D_0}\,U_{g^2_D}^{\lambda_F}(x)=w>0\quad\text{for all \ } z\in\partial D_0.\]
Since $C_\alpha(\partial D\cap\partial D_0)>0$ in consequence of Corollary~\ref{cor-infty},~(b), Lusin's type theorem for the Newtonian potentials shows that the preceding
relation is impossible.

In view of~(\ref{w0-infty}), \cite[Theorem~1.13]{L} yields $\lambda_F\bigl|_{{\rm Int}\,F}=0$, and so
$S^{\lambda_F}_D\subset\partial_D\breve{F}$.
Thus, if we prove the converse inclusion, the latter identity in~(\ref{desc-sup}) follows. Assume, on the contrary, it
not to hold; then one can choose a point
$y\in\partial_D\breve{F}$ and a neighborhood $V_1\subset D$
of~$y$ so that $V_1\cap S^{\lambda_F}_D=\varnothing$. As $V_1\cap F$ has nonzero capacity, we see from~(\ref{w0-infty}) that there exists $y_1\in V_1$ such that
$U_{g^2_D}^{\lambda_F}(y_1)=w$. Taking~(\ref{w1-infty}) into account and applying the maximum principle to the harmonic in~$V_1$ function~$U_{g^2_D}^{\lambda_F}$, we thus have
$U_{g^2_D}^{\lambda_F}(x)=w$ for all $x\in V_1$.
This contradicts~(\ref{less}), because $V_1\cap D_0\ne\varnothing$.\qed

\subsection{Proof of Theorem~\ref{th-dual'}}
Since $U^\xi_{g^\alpha_D}(x)$ is continuous, so is $U^{\lambda_0}_{g^\alpha_D}(x)$. Indeed,
\[ U^{\lambda_0}_{g^\alpha_D}(x)=U^\xi_{g^\alpha_D}(x)-U^{\xi-\lambda_0}_{g^\alpha_D}(x),\]
which implies that $U^{\lambda_0}_{g^\alpha_D}(x)$ is both lower semicontinuous and upper semicontinuous. Next, since $\lambda_0$ solves the non-weighted Problem~\ref{pr2} with the constraint~$\xi$, both~(\ref{b1'}) and~(\ref{b2'}) are fulfilled. As $U^{\lambda_0}_{g^\alpha_D}(\cdot)$ is continuous, equality in~(\ref{b1'}) holds in fact everywhere on~$S^{\xi-\lambda_0}_D$. This allows us to rewrite \eqref{b1'} and \eqref{b2'} respectively as
\begin{align*} U^{\xi-\lambda_0}_{g^\alpha_D}(x)-U^{\xi}_{g^\alpha_D}(x)&=-w'_{\lambda_0} \quad \mbox{on \ }S^{\xi-\lambda_0}_D,\\
U^{\xi-\lambda_0}_{g^\alpha_D}(x)-U^{\xi}_{g^\alpha_D}(x)&\geqslant -w'_{\lambda_0} \quad\mbox{on\ }D,\end{align*}
or equivalently, in the notations accepted in Section~\ref{sec-var7},
\begin{align*} W^{\theta}_{g^\alpha_D,f}(x)&=-qw'_{\lambda_0} \quad \mbox{on\ }S^{\theta}_D,\\
W^{\theta}_{g^\alpha_D,f}(x)&\geqslant -qw'_{\lambda_0} \quad\mbox{on\ }D.\end{align*}
This establishes relations (\ref{D1}) and (\ref{D2}). Since $\theta\in\mathcal E^{q\xi}_{g^\alpha_D,f}(F,1)\subset\mathcal E^+_{g^\alpha_D,f}(F,1)$, application of  Theorems~\ref{th-main''} and~\ref{th-infty} therefore completes the proof.\qed

\section{Examples}\label{sec-ex}

In this section, $n=3$ and $x=(x_1,x_2,x_3)$ is a point in $\mathbb R^3$. In the following Examples\/~\ref{ex1}--\ref{ex6}, consider $0<\alpha\leqslant2$, $D:=B(0,1)$ and
$A_2:=D^c$; then $A_2$ is not $\alpha$-thin at~$\omega$.

\begin{exm}\label{ex1} Write $E:=\bigl\{x\in B(0,1): \ 0\leqslant x_1<1, \ x_2=x_3=0\bigr\}$.
Since $C_\alpha(E)=0$, Lemma~\ref{lemma:equiv} yields $C_{g^\alpha_D}(E)=0$. Consequently, there exists a neighborhood~$F$ of~$E$, closed in~$D$, with
$0<C_{g^\alpha_D}(F)<\infty$.
We can certainly assume that $\partial D\cap{\mathrm C\ell}_{\mathbb R^3}F=\{(1,0,0)\}$. Consider an external field~$f$ such that $f(x)<\infty$ n.e.~in~$F$ unless Case~II holds.
Application of Lemma~\ref{autom}, Theorems~\ref{lemma-main} and~\ref{th-main'} then shows that, in both Cases~I and~II, Problem~\ref{pr} is (uniquely)
solvable for every $\sigma\in\mathcal A(F)\cup\{\infty\}$.
\end{exm}

\begin{exm}\label{ex5}Let $F=D$. Define $\xi:=m_3|_F$; then $\xi\in\mathfrak C_0(F)$ and has finite $\alpha$-Riesz energy and thus it is admissible (see Definition~\ref{admissible}). Consider an external field~$f$ such that Case~I holds and $f(x)<\infty$ $m_3|_F$-a.e. Hence, by Lemma~\ref{autom}, assumptions~\eqref{riesz} and~\eqref{green} hold and so we can apply Theorems~\ref{silv-bound}
and~\ref{th-main'} to conclude that Problem~\ref{pr} is solvable; that is, no short-circuit between the conductors~$F$ and~$D^c$ occurs, though they touch each other over
the whole sphere~$S(0,1)$.\end{exm}

\begin{exm}\label{ex6}Let $F=S(x_0,1/2)\cap D$, where $x_0=(1/2,0,0)$. Consider an external field~$f$ such that Case~I holds and $f(x)<\infty$ $m_2|_F$-a.e. We further assume $1<\alpha\le 2$.  Define $\xi:=m_2|_F$; then (since $\alpha>1$) $\xi$ has finite $\alpha$-Riesz energy and so, as in the previous example, we can apply
Theorems~\ref{silv-bound} and~\ref{th-main'} to obtain the solvability of Problem~\ref{pr}.\end{exm}

\begin{exm}\label{ex4}
Let $\alpha=2$, $D=\bigl\{x\in\mathbb R^3: \  x_1>0\bigr\}$ and $F=\bigl\{x\in D: \ x_1=1\bigr\}$; then $A_2=D^c$ is not $2$-thin at~$\omega$, while
$C_{g^2_D}(F)=\infty$, for $C_2(F)=\infty$ by~\cite[Chapter~II, Section~3, n$^\circ$\,14]{L}. Let, in addition, Case~II with $\zeta\geqslant0$ hold. Then, by
Theorems~\ref{infcap} and~\ref{th-main'}, Problem~\ref{pr} is nonsolvable for every $\sigma\in\mathcal A(F)\cup\{\infty\}$ such that
$\sigma\geqslant\xi_0$, where $\xi_0\in\mathfrak C(F)\setminus\mathfrak C_0(F)$ is properly chosen. Thus, for these~$\sigma$, a short-circuit between~$F$ and~$D^c$
occurs at~$\omega$.
  To construct a constraint which would not allow such a short-circuit, consider $K_k$, $k\in\mathbb N$, where $K_k:=\bigl\{x\in F: \ (k-1)^2\leqslant x_2^2+x_3^2\leqslant
  k^2\bigr\}$ for all $k\geqslant 2$ and $K_1:=\bigl\{x\in F: \ x_2^2+x_3^2\leqslant 1\bigr\}$, and write
  \[\xi:=\sum_{k\in\mathbb N}\,\frac{m_2|_{K_k}}{k^3}.\]
Then $\xi$ is bounded and admissible, and so we can again use Theorems~\ref{silv-bound} and~\ref{th-main'} to see that Problem~\ref{pr} for this constraint~$\xi$ is solvable.
\end{exm}

\begin{figure}[htbp]
\begin{center}
\vspace{-.1in}

\includegraphics[width=4in]{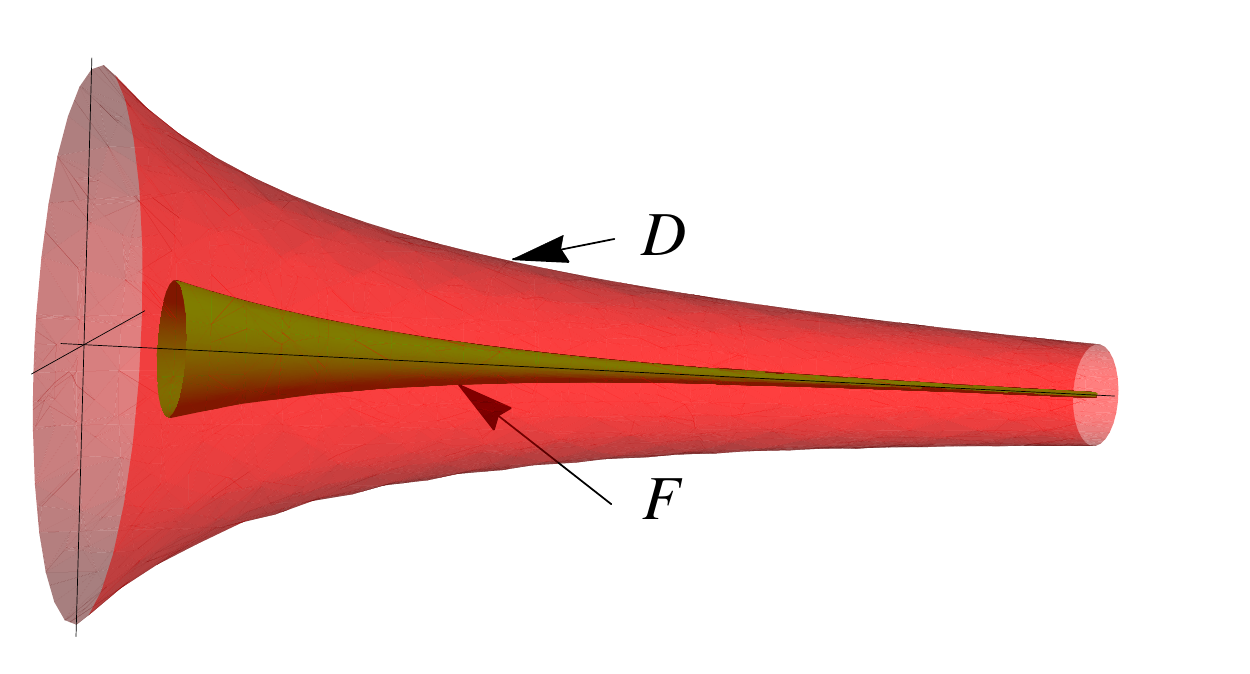}

\vspace{-.3in}
\caption{The condenser for Example~\ref{ex2}.}
\label{default}
\end{center}
\end{figure}

\begin{exm}\label{ex2} Let $\alpha=2$, Case~II with $\zeta\geqslant0$ hold, and let $F$ and~$D$ be defined by
\begin{align*}
F:=\bigl\{x&\in\mathbb R^3:&2\leqslant x_1&<\infty,&x_2^2+x_3^2&=\rho_1^2(x_1),&\text{where \ }\rho_1(x_1)&=\exp(-x_1)\bigr\},\\
D:=\bigl\{x&\in\mathbb R^3:&1<x_1&<\infty,&x_2^2+x_3^2&<\rho_2^2(x_1),&\text{where \ }\rho_2(x_1)&=x_1^{-1}\bigr\}.\end{align*}
Then $A_2=D^c$ is not $2$-thin at~$\omega$, while $C_{g^2_D}(F)=\infty$, for $C_2(F)=\infty$ by~\cite{Z0}.
      Hence, by Theorems~\ref{infcap} and~\ref{th-main'}, Problem~\ref{pr} is nonsolvable for every $\sigma\in\mathcal A(F)\cup\{\infty\}$ such that
      $\sigma\geqslant\xi_0$, where $\xi_0\in\mathfrak C(F)\setminus\mathfrak C_0(F)$ is properly chosen. However, Problem~\ref{pr} with $\xi:=m_2|_F$ is already
      solvable, which is seen from Theorems~\ref{silv-bound} and~\ref{th-main'}.
 \end{exm}

\end{document}